\newtheorem{thm}{Theorem}[section]
\newtheorem{lemma}[thm]{Lemma}
\newtheorem{prop}[thm]{Proposition}
\newtheorem{cor}[thm]{Corollary}
\theoremstyle{definition}
\newtheorem{rem}[thm]{Remark}
\newcommand{\E}[1]{\mathbb{E}\left[#1\right]}
\newcommand{\p}{\mathbb{P}}
\renewcommand{\P}{\mathbb{P}}
\newcommand{\Var}{{\mathrm{Var}}\,}
\newcommand{\Z}{\mathcal{Z}}
\newcommand{\N}{\mathcal{N}}
\newcommand{\new}[1]{{\color{blue} #1}}
\def\br#1{\left(#1\right)}
\def\brb#1{\left[#1\right]}
\numberwithin{equation}{section}
\author{\uppercase{\footnotesize{Dmitry Beliaev}}
	\footnotesize{ and }
\uppercase{\footnotesize{Riccardo W. Maffucci}}}
\title{\normalsize{\uppercase{\bf{
				%On the coupling of Gaussian toral fields
Intermediate and small scale limiting theorems for random fields}}}}
\date{}
\newcommand{\Addresses}{{% additional braces for segregating \footnotesize
  \bigskip
  \footnotesize
  
  D.~Belyaev, \textsc{Mathematical Institute, University of Oxford, Woodstock Road Oxford OX2 6GG, UK}\par\nopagebreak
  \texttt{dmitry.belyaev@maths.ox.ac.uk}
  
  \bigskip
  
  R.W.~Maffucci, \textsc{Mathematical Institute, University of Oxford, Woodstock Road Oxford OX2 6GG, UK}\par\nopagebreak
  \texttt{riccardo.maffucci@maths.ox.ac.uk}
}}
\newcommand{\fixed@sra}{$\vrule height 2\fontdimen22\textfont2 width 0pt\shortrightarrow$}
\newcommand{\shortarrow}[1]{%
  \mathrel{\text{\rotatebox[origin=c]{\numexpr#1*45}{\fixed@sra}}}
}
\begin{document}
\maketitle
\begin{abstract}
In this paper we study the nodal lines of random eigenfunctions of the Laplacian on the torus, the so called `arithmetic waves'. To be more precise, we study the number of intersections of the nodal line with a straight interval in a given direction. We are interested in how this number depends on the length and direction of the interval and the distribution of spectral measure of the random wave. We analyse the second factorial moment in the short interval regime and the persistence probability in the long interval regime. We also study relations between the Cilleruelo and Cilleruelo-type fields. We give an explicit coupling between these fields which on mesoscopic scales preserves the structure of the nodal sets with probability close to one.

%We consider a random Gaussian ensemble of eigenfunctions for the Helmholtz equation in the plane, and study the nodal lines. Our main theorems yield a natural coupling between fields at an appropriate scale, when the respective spectral measures are close.

%We analyse in detail the Cilleruelo field, with spectral measure supported at four points, and its restriction to a straight line. The persistence probability of the resulting process %at certain scales  depends on the line's direction. Moreover, the persistence is well approximated by that of a (restricted) `Cilleruelo type' field, i.e. with spectral measure close to the Cilleruelo.

%Along the way we study the nodal intersections for random toral waves against a straight line, and find the leading asymptotic for the second factorial moment on small scales. The asymptotic remarkably depends on both the line's direction and the limiting spectral measure.
\end{abstract}
{\bf Keywords:} Gaussian fields, random waves, nodal lines, coupling, persistence, large deviations.
\\
{\bf MSC(2010):} 60G60, 60G15, 60F10.

\section{Introduction}
\subsection{The random toral wave ensemble}
In recent years the merge of ideas from several branches of mathematics has brought by the study of {\bf nodal lines}. For a real- or complex-valued function $\Phi$ defined on a smooth compact surface $\mathcal{S}$, the {\em nodal set} is
\begin{equation}
\label{nodset}
\{x\in\mathcal{S} : \Phi(x)=0\}.
\end{equation}
Let $\Delta$ be the Laplacian operator on $\mathcal{S}$. The study of functions $\Phi$ satisfying the Helmholtz equation
\begin{equation*}
(\Delta+E)\Phi=0
\end{equation*}
with eigenvalue (or `energy') $E>0$, especially in the high energy limit $E\to\infty$, has several applications in PDEs and in physics, e.g. the study of waves. Here the nodal lines remain stationary during membrane vibrations.

Our starting point is the ensemble of `arithmetic random waves', first introduced in 2007 by Oravecz, Rudnick and Wigman \cite{orruwi}. These waves are the {\bf random Gaussian Laplace toral eigenfunctions}, defined as follows. On the torus $\mathbb{T}^2=\mathbb{R}^2/\mathbb{Z}^2$, the Laplace eigenvalues are of the form $E_m=4\pi^2m$ where $m$ is the sum of two integer squares. Let
\begin{equation}
\label{lp}
\Lambda=\Lambda_m=\{\lambda\in\mathbb{Z}^2 : |\lambda|^2=m\}
\end{equation}
be the set of lattice points on the circle $\sqrt{m}\mathbb{S}^1$. For the eigenvalue $E_m$ the collection of exponentials
\begin{equation*}
\{e^{2\pi i\langle\lambda,x\rangle}\}_{\lambda\in\Lambda_m}
\end{equation*}
forms a basis for the relative eigenspace. Therefore, all the (complex-valued) eigenfunctions corresponding to the eigenvalue $4\pi^2m$ have the expression
\begin{equation}
\label{eigd}
\Phi(x)=
\sum_{\lambda\in\Lambda}
g_{\lambda}
e^{2\pi i\langle\lambda,x\rangle},
\end{equation}
with $g_{\lambda}$ Fourier coefficients. The wavelength is $1/\sqrt{m}$. The eigenspace dimension is the number of representations $r_2(m)=|\Lambda_m|$ of $m$ as the sum of $2$ perfect squares. We may now give the definition of arithmetic random waves
\begin{equation}
\label{arwd}
\Psi_m(x)=\frac{1}{\sqrt{r_2(m)}}
\sum_{\lambda\in\Lambda}
a_{\lambda}
e^{2\pi i\langle\lambda,x\rangle},
\qquad\quad
x\in\mathbb{T}^2,
\end{equation}
where $a_{\lambda}$ are complex standard Gaussian random variables \footnote{These are defined on a probability space $(\Omega,\mathcal{F},\mathbb{P})$, and $\mathbb{E}$ denotes expectation with respect to $\mathbb{P}$.} (in the sense that $\mathbb{E}[a_{\lambda}]=0$ and $\mathbb{E}[|a_{\lambda}|^2]=1$). The $a_{\lambda}$ are independent save for the relations $a_{-\lambda}=\overline{a_{\lambda}}$, making the eigenfunction \eqref{arwd} real-valued. We may equivalently write \footnote{The notation $A\sim \N(\mu,\sigma^2)$ means that $A$ is a (real) Gaussian r.v. of mean $\mu$ and variance $\sigma^2$.}
\begin{equation*}
a_{\lambda}=b_\lambda+ic_\lambda,\qquad\qquad b_\lambda,c_\lambda\sim \N(0,1/2)
\end{equation*}
and the $b_\lambda,c_\lambda$ are independent save for $b_{-\lambda}=b_{\lambda}$ and $c_{-\lambda}=-c_{\lambda}$. The coefficients (such as the $1/\sqrt{r_2(m)}$ in \eqref{arwd}) are always chosen so that the random fields are unit variance.

We are interested in the behaviour of arithmetic waves as $m\to \infty$. In this context, it is natural to re-scale the function so that its wavelength becomes $1$. Namely, we consider 
\begin{equation}
\label{rescaled_wave}
\Psi_m(x)=\frac{1}{\sqrt{r_2(m)}}
\sum_{\lambda\in\Lambda_m}
a_{\lambda}
e^{2\pi i\langle\lambda/\sqrt{m},x\rangle},
\qquad\quad
x\in\sqrt{m}\mathbb{T}^2.    
\end{equation}

The {\em spectral measure}  $\nu_m$ of the rescaled function  purely atomic and is given by
\begin{equation}
\label{nu}
\nu_m:=\frac{1}{r_2(m)}\sum_{\lambda\in\Lambda}\delta_{\lambda/\sqrt{m}}
\end{equation}
and its support is 
\[
\frac{1}{\sqrt{m}}\Lambda_m\subset\mathbb{S}^{1}.
\]

The behaviour of arithmetic waves can be described in terms of the $\nu_m$ or, equivalently, in terms of its support. These measures change in a complicated way as $m\to\infty$. In particular, they do not converge. On the other hand, for a   {\em generic} (i.e., density one) sequences of energy levels $\{m_k\}_k$,  the measures $\nu_{m_k}$ converge weak-* to the uniform measure on $\mathbb{S}^1$: %\footnote{The statement $\nu_j\Rightarrow\nu$, i.e. weak-* convergence of measures, means that for every smooth bounded test function $h$, one has $\int hd\nu_j\to\int hd\nu$.}
\begin{equation}
\label{weak}
\nu_{m_k}\Rightarrow\frac{d\theta}{2\pi}.
\end{equation}
See \cite{erdhal, equidi} for details.

To the other extreme, Cilleruelo showed that there exist (`thin' i.e. density zero) sequences of energy levels $\{m_k\}_k$ such that all the lattice points lie on arbitrarily short arcs:
\begin{equation}
\label{cilmeas}
\nu_{m_k}\Rightarrow\frac{1}{4}(\delta_{\pm 1}+\delta_{\pm i}),
\end{equation}
with $\nu_m$ as in \eqref{nu}. Indeed, Cilleruelo proved the following result.
\begin{prop}[{\cite[Theorem 2]{ciller}}]
	For every $\epsilon>0$ and for every integer $k$, there exists a circle $\sqrt{m}\mathbb{S}^1$ such that all the lattice points of $\Lambda_m$ are on the arcs $\sqrt{m}e^{i\pi/2(t+\theta)}$, $|\theta|<\epsilon$, $t=0,1,2,3$, and $|\Lambda_m|>k$.
\end{prop}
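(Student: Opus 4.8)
The plan is to translate everything into the ring of Gaussian integers $\mathbb{Z}[i]$ and exploit the multiplicative structure of sums of two squares. A lattice point $\lambda=(\lambda_1,\lambda_2)\in\Lambda_m$ corresponds to the Gaussian integer $z=\lambda_1+i\lambda_2$ of norm $N(z)=|z|^2=m$, and the direction of $\lambda$ on the circle $\sqrt m\,\mathbb{S}^1$ is exactly $\arg z$. So the Proposition asks for an $m$ carrying more than $k$ such $z$, all of whose arguments lie within $\pi\epsilon/2$ of a multiple of $\pi/2$. I would seek $m$ of the form $m=p_1p_2\cdots p_n$, a product of distinct primes $p_j\equiv 1\pmod 4$. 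For each such prime we have the factorization $p_j=\pi_j\bar\pi_j$ in $\mathbb{Z}[i]$, and I would fix the representative $\pi_j=a_j+ib_j$ lying in the open first quadrant, with argument $\theta_j:=\arg\pi_j=\arctan(b_j/a_j)\in(0,\pi/2)$.

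Next I would record the two standard consequences of unique factorization in $\mathbb{Z}[i]$. Since all prime factors of $m$ are $\equiv 1\pmod 4$, every divisor of the squarefree $m$ is $\equiv1\pmod 4$, so by the classical formula the number of representations is $r_2(m)=4\cdot 2^{n}$. More precisely, every $z$ with $N(z)=m$ is, up to one of the four units $i^{s}$ ($s\in\{0,1,2,3\}$), a product $\prod_{j=1}^{n}\rho_j$ with $\rho_j\in\{\pi_j,\bar\pi_j\}$; this accounts for exactly $4\cdot 2^{n}$ associates, matching $r_2(m)$. Taking arguments gives the crucial identity
\begin{equation*}
\arg z=\frac{\pi}{2}\,s+\sum_{j=1}^{n}\varepsilon_j\,\theta_j,\qquad \varepsilon_j\in\{+1,-1\},\ s\in\{0,1,2,3\}.
\end{equation*}
Hence $z$ automatically sits within the angle $\bigl|\sum_j\varepsilon_j\theta_j\bigr|\le\sum_{j}\theta_j$ of the axis direction $s\pi/2$. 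Consequently, if I can arrange $\sum_{j=1}^{n}\theta_j<\pi\epsilon/2$, then \emph{every} lattice point of $\Lambda_m$ lands in one of the four arcs $\sqrt m\,e^{i\frac{\pi}{2}(t+\theta)}$, $|\theta|<\epsilon$, with $t=s$, which is precisely the claim.

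The remaining, and genuinely substantive, point is the arithmetic input: for every bound $\delta>0$ there are infinitely many primes $p\equiv 1\pmod 4$ whose Gaussian factor has argument $\theta<\delta$, i.e.\ Gaussian primes lying in an arbitrarily thin sector about the real axis. This I would obtain from the equidistribution of the arguments of Gaussian primes (Hecke's theorem on Hecke characters / prime ideals of $\mathbb{Q}(i)$), which guarantees that every sector contains infinitely many prime norms. Granting this, the construction finishes cleanly: first fix $n$ with $4\cdot 2^{n}>k$; then choose the $p_j$ greedily and distinct, selecting at the $j$-th step a prime with $\theta_j<\pi\epsilon/2^{\,j+1}$, which is possible since infinitely many primes lie below any positive angle threshold. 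Then $\sum_{j}\theta_j<\sum_{j\ge1}\pi\epsilon/2^{\,j+1}=\pi\epsilon/2$, and $m=p_1\cdots p_n$ satisfies both $|\Lambda_m|=4\cdot 2^{n}>k$ and the arc condition.

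The main obstacle is entirely concentrated in the small-angle step: the multiplicative bookkeeping above is routine, but producing primes $p=a^2+b^2$ with $b/a$ arbitrarily small is exactly the statement that Gaussian primes approach the real axis, and there is no elementary shortcut avoiding some form of equidistribution (note that fixing $b$, e.g.\ $p=a^2+1$, is the open problem on prime values of $a^2+1$). I would also remark that a variant trades the need for many small-angle primes against a single one: taking $m=p^{n}$ for one prime $p\equiv1\pmod4$ with factor $\pi$ of argument $\theta$, one gets $r_2(m)=4(n+1)$ and $\arg z=\tfrac{\pi}{2}s+(2j-n)\theta$ with $|2j-n|\le n$, so it suffices to find a single prime with $\theta<\pi\epsilon/(2n)$ where $n=\lceil k/4\rceil$; this still rests on the same equidistribution input but requires producing only one small-angle prime.
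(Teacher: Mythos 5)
The paper does not prove this statement at all --- it is quoted verbatim from Cilleruelo's paper \cite{ciller} as background --- so there is no in-paper proof to compare against. Your argument is, however, a correct and complete proof of the result, and it is essentially the standard one underlying Cilleruelo's construction: unique factorization in $\mathbb{Z}[i]$ reduces everything to the arguments $\theta_j$ of the Gaussian prime factors, the identity $\arg z=\tfrac{\pi}{2}s+\sum_j\varepsilon_j\theta_j$ shows that \emph{every} representation (not just a favourable one) lies within $\sum_j\theta_j$ of an axis, and the counting formula $r_2(m)=4\cdot 2^n$ (or $4(n+1)$ in the prime-power variant) gives the multiplicity. You have also correctly located the only non-routine input: the existence of split primes $p=a^2+b^2$ with $b/a$ arbitrarily small, which does require Hecke's angular equidistribution of Gaussian primes (or Kubilius's effective version) and admits no known elementary substitute; your remark that fixing $b$ would run into the open $a^2+1$ problem is apt. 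The only cosmetic point worth checking is the normalisation of the arcs: in the statement the angular deviation from the axis is $\tfrac{\pi}{2}|\theta|<\tfrac{\pi\epsilon}{2}$, which is exactly the threshold you impose on $\sum_j\theta_j$, so the bookkeeping is consistent.
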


The limiting measure in \eqref{cilmeas} is called the {\it Cilleruelo measure} in the literature \cite{ciller, rudwig, krkuwi, roswig}. Figure \ref{pic3} represents an arithmetic random wave in case of spectral measure supported on $8$ points that are close to the support of the Cilleruelo measure.

\begin{figure}
	\begin{center}
		\includegraphics[width=0.8\textwidth]{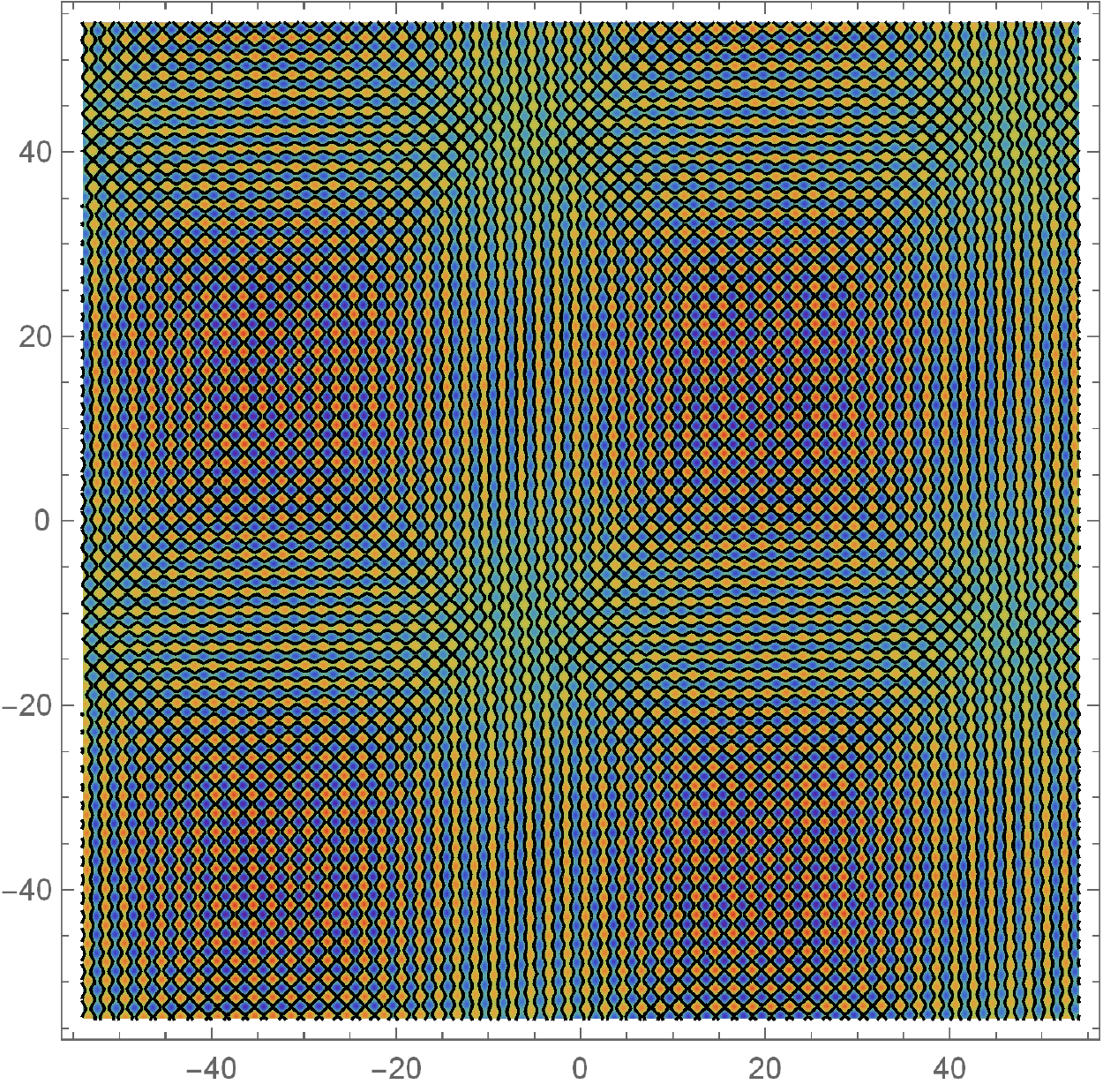}
	\end{center}
	\caption{Plot of a (rescaled) Cilleruelo type eigenfunction, for $m = 54^2 + 1$ and $r_2(m) = 8$. Nodal lines are in black.}
	\label{pic3}
\end{figure}

%\begin{figure}
%	\begin{center}
%		\includegraphics[height=7cm]{pickw2.jpg}
%	\end{center}
%	\caption[Plot of random Cilleruelo type eigenfunction, for $m = 54^2 + 1$ and $r_2(m) = 8$. Nodal lines are in black.]{Plot of random Cilleruelo type eigenfunction, for $m = 54^2 + 1$ and $r_2(m) = 8$. Nodal lines are in black.}
%	\label{pic4}
%\end{figure}

The field corresponding to the Cilleruelo measure is called the \emph{Cilleruelo field}. It can be written explicitly as
\begin{equation}
\label{cilfie}
F(x)=\frac{\sqrt{2}}{2}[b_1\cos(x_1)+c_1\sin(x_1)+b_2\cos(x_2)+c_2\sin(x_2)],
\end{equation}
were $b_1,b_2,c_1,c_2\sim\N(0,1)$ and i.i.d.
Figure \ref{pic1}  gives  examples of the nodal lines for a Cilleruelo field.
\begin{figure}
	\begin{center}
		\includegraphics[width=0.45\textwidth]{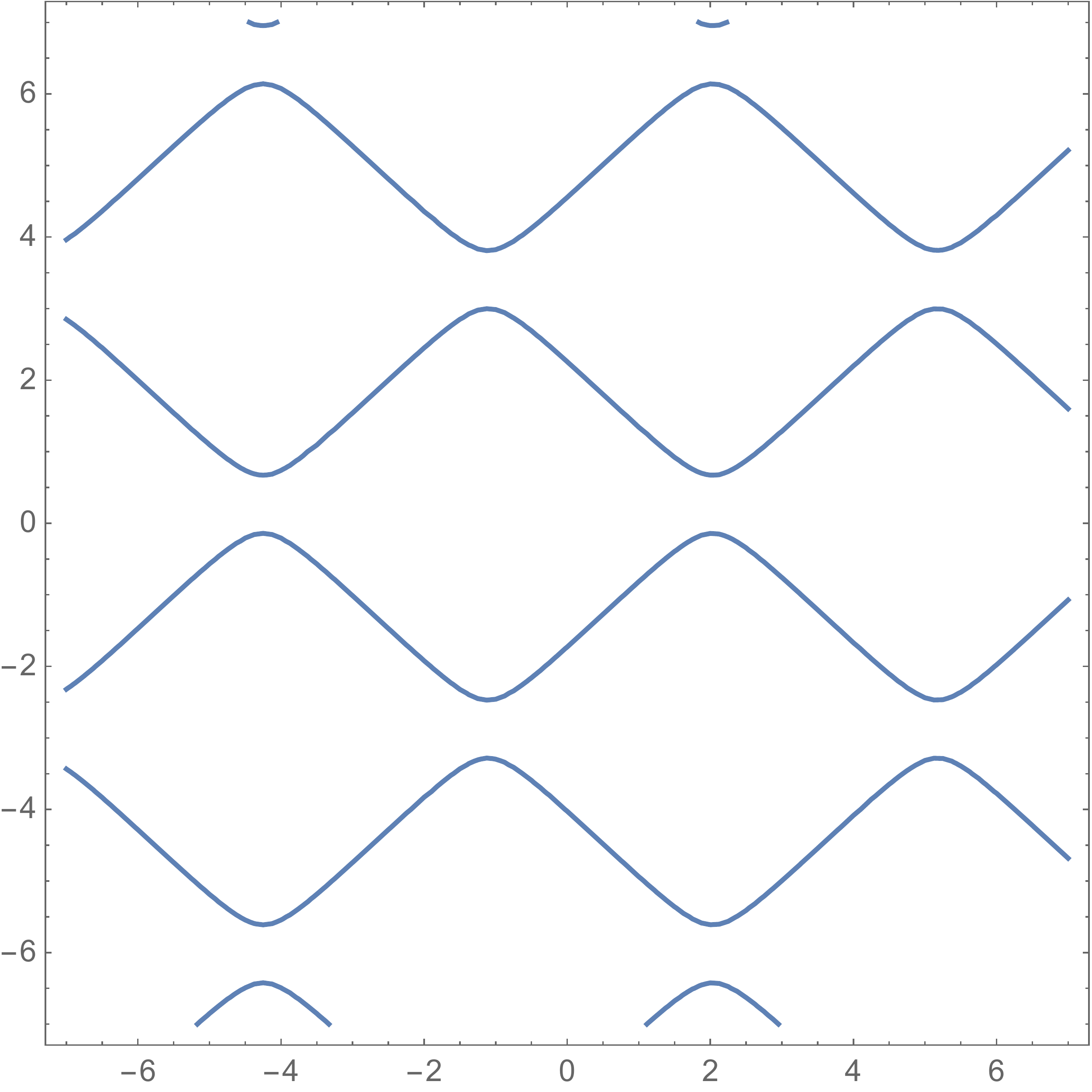}
		\includegraphics[width=0.45\textwidth]{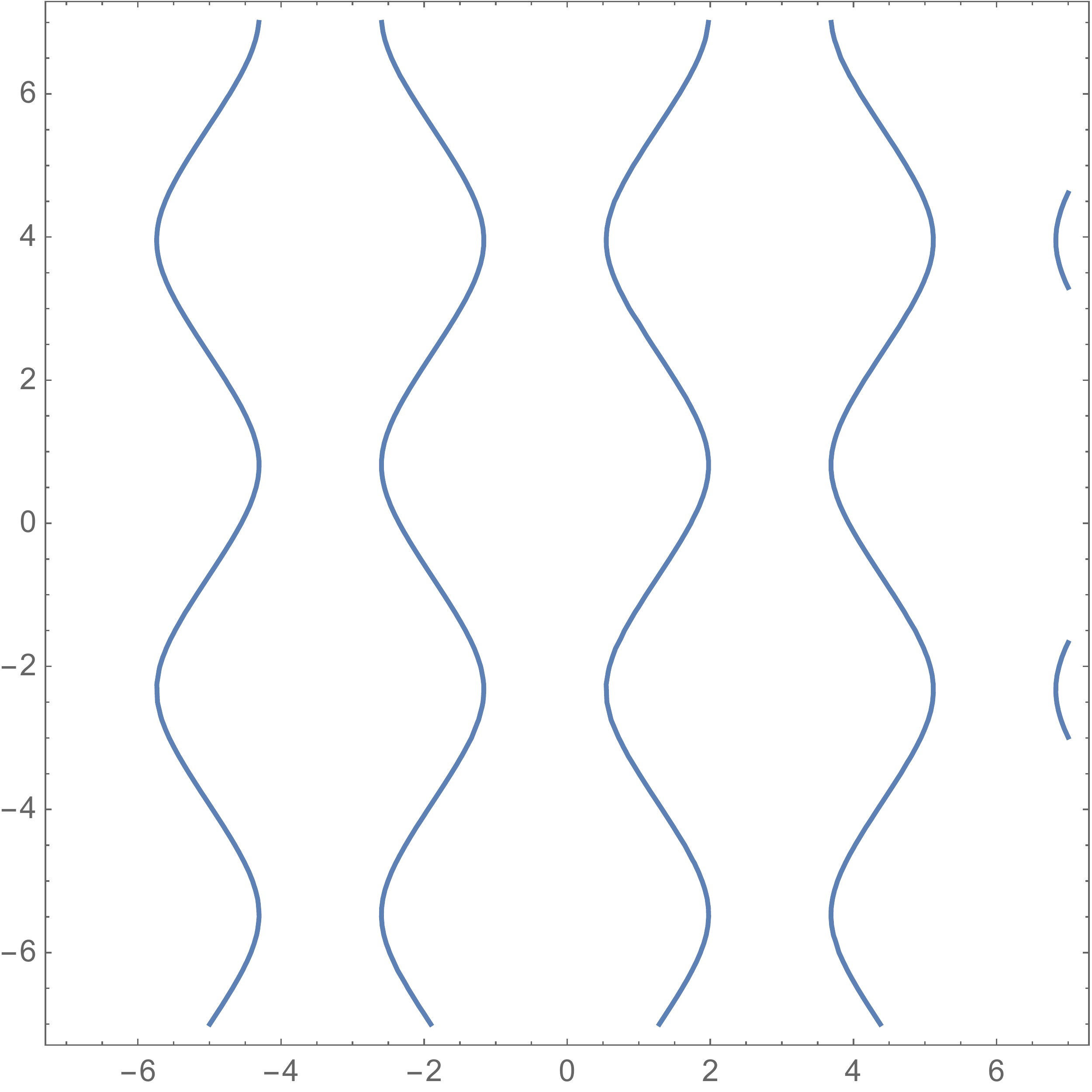}
	\end{center}
	\caption{Two samples of nodal lines of a (rescaled) Cilleruelo field.}
	\label{pic1}
\end{figure}

We observe that the nodal lines of Figure \ref{pic3}  resemble those of Figures \ref{pic1}. In Theorem \ref{discrete} we will give a precise meaning to this statement.

In this paper we are mostly interesting in understanding of behaviour of nodal sets as $m\to\infty$ in terms of the limiting spectral measure. In particular, we want to compare nodal sets with different energies. In this context it is natural to study rescaled fields defined by \eqref{rescaled_wave} since they all oscillate on the same scale and their spectral measures are all supported on the unit circle. Unless explicitly stated, all fields in this paper are assumed to be rescaled. It is also quite natural to extend by periodicity arithmetic waves from the (rescaled) torus to the entire plane.

\subsection{Nodal intersections}
For functions defined on a surface, a good indication about the geometry of the nodal set may be obtained by analysing its intersections with a fixed curve \cite{totzel,cantot,elhtot}. For (non-rescaled) arithmetic random waves $\Psi$, the expected nodal intersections number $\mathcal{Z}$ against a fixed smooth curve $\mathcal{C}\subset\mathbb{T}^2$ of length $L$ is \cite[Theorem 1.1]{rudwig}
\begin{equation}
\label{expectold}
\mathbb{E}[\mathcal{Z}]=\sqrt{2m}L.
\end{equation}
It is important to note that although the arithmetic random wave is not anisotropic, but this expectation is. The reason is that the field is stationary, and this is enough for such expectations to depend on the length of the curve only, but not on its shape or orientation.

Rudnick-Wigman \cite{rudwig} and subsequently Rossi-Wigman \cite{roswig} investigated the variance and distribution of $\mathcal{Z}$ against reference curves of {\em nowhere zero curvature}. The precise asymptotic behaviour of the variance is remarkably non-universal: it depends both on $\mathcal{C}$ and on the limiting spectral measure of the random waves.

Now take the intersection of the nodal line of $\Psi$ with a {\em fixed} straight line segment (the other extreme of nowhere zero curvature)
\begin{equation*}
\mathcal{C}: \gamma(t)=t(\cos(u),\sin(u)),
\qquad\qquad
0\leq t\leq L.
\end{equation*}
The expected intersection number \eqref{expectold} remarkably does not depend on the direction of the straight line. In \cite{maff2d} Maffucci bounded the nodal intersection variance with the same order of magnitude as the leading term in the work \cite{rudwig} of Rudnick-Wigman for the case of nowhere zero curvature curves. Here the problem is intimately related with the following so far unsolved question in analytic number theory. Consider a circle of radius $\sqrt{m}$: is it true that on every arc of length $(\sqrt{m})^{1/2}$ there are $O(1)$ lattice points as $m\to\infty$? One has the upper bound \cite{harwri}%\footnote{\new{You have to be more careful, this statement is obviously true, since all $\Lambda_n$ are finite. Probably you mean something like uniformly bounded}}
%\footnote{For two functions $f(k),g(k)$ the notation $f\sim g$ means that the ratio of the two sides converges to $1$ as $k$ tends to a limit. We use $f=O(g)$ when $|f(k)|\leq c|g(k)|$ for some $c>0$ as $k$ tends to a limit. The expression $f\asymp g$ means that both $g=O(f)$ and $f=O(g)$ hold.}
\begin{equation}
\label{Nsmall}
r_2(m)=O(m^\epsilon)
\qquad\qquad\forall\epsilon>0.
\end{equation}
Jarnik \cite{jarnik} showed that on arcs of length $<(\sqrt{m})^{1/3}$ there are at most $2$ lattice points. Further progress is due to Cilleruelo-Granville \cite{cilgr2, cilgr1}.

In the present work we focus on the intersections of the nodal set for rescaled waves $\Psi$ against a straight line which has a fixed direction but it length varies.  Fix $0\leq u\leq\pi/4$ and take a sequence of straight line segments $\{\mathcal{C}_L\}_L$, $\mathcal{C}_L\subset\mathbb{T}^2$ along this direction,
\begin{equation}
\label{C2}
\mathcal{C}_L: \gamma(t)=t(\cos(u),\sin(u)),
\qquad\qquad\quad
0\leq t\leq L.
\end{equation}
We may assume $0\leq u\leq\pi/4$ thanks to the spectral measure symmetries. The nodal intersections $\mathcal{Z}=\mathcal{Z}(m,L)$ are the zeroes of the process $\psi:=\Psi(\gamma)$. How does $\Z$ depend on the direction of the line? We start with the asymptotic behaviour of the variance on {\em microscopic scales}, i.e. scales that are smaller than the wavelength.
\begin{prop}
	\label{2fmprop}
Let  $\{m_k\}_k$ be a sequence of energies such that the (rescaled) spectral measures converge to the limiting measure $\nu$. Let $C$ be an interval of length $L=L(m)$ in the direction $u$ and $\mathcal{Z}$ be the corresponding number of zeroes. Then, as $L\to 0$, we have
\begin{equation}
	\label{2fmasy}
	\E{\Z(\Z-1)}=L^3\cdot\frac{\sqrt{2}\pi^2}{24}(1+\widehat{\nu}(4)\cos(4u))+O(L^5),
\end{equation}
where $\widehat{\nu}(4)$ is the fourth Fourier coefficient of $\nu$.
\end{prop}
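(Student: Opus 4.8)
The plan is to compute $\E{\Z(\Z-1)}$ through the Kac--Rice formula for the second factorial moment of the zero set of the smooth, stationary, centred Gaussian process $\psi(t):=\Psi(\gamma(t))$, whose covariance is
\[
r(\tau)=\E{\psi(0)\psi(\tau)}=\int_{\mathbb{S}^1}\cos\!\big(2\pi\tau\cos(\theta-u)\big)\,d\nu(\theta).
\]
The formula reads $\E{\Z(\Z-1)}=\int_0^L\!\int_0^L K_2(t_1,t_2)\,dt_1\,dt_2$, where
\[
K_2(t_1,t_2)=\mathbb{E}\big[\,|\psi'(t_1)|\,|\psi'(t_2)|\bigm|\psi(t_1)=\psi(t_2)=0\,\big]\,p_{t_1,t_2}(0,0)
\]
is the two-point zero intensity and $p_{t_1,t_2}$ the joint density of $(\psi(t_1),\psi(t_2))$. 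By stationarity $K_2$ depends only on $\tau=t_2-t_1$; writing $K_2(\tau)$ the double integral collapses to $2\int_0^L(L-\tau)K_2(\tau)\,d\tau$. I will show that $K_2(\tau)=a|\tau|+O(|\tau|^3)$ as $\tau\to0$, which at once gives $\E{\Z(\Z-1)}=\tfrac{a}{3}L^3+O(L^5)$ and reduces everything to identifying the linear coefficient $a$.

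Next I would assemble the Gaussian data. Here $p_{t_1,t_2}(0,0)=\big(2\pi\sqrt{1-r(\tau)^2}\big)^{-1}$, and the law of $(\psi'(t_1),\psi'(t_2))$ conditioned on $\psi(t_1)=\psi(t_2)=0$ is centred Gaussian with covariance $\Omega(\tau)=\Sigma_{DD}-\Sigma_{DV}\Sigma_{VV}^{-1}\Sigma_{VD}$, the blocks being read off from $\E{\psi(t_i)\psi(t_j)}$, $\E{\psi'(t_i)\psi(t_j)}$ and $\E{\psi'(t_i)\psi'(t_j)}$, i.e.\ from $r(\tau),r'(\tau),r''(\tau)$ and $-r''(0)$. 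For a centred bivariate normal with equal variances $\sigma^2$ and correlation $\rho$ one has the classical identity $\mathbb{E}[|X||Y|]=\tfrac{2\sigma^2}{\pi}\big(\sqrt{1-\rho^2}+\rho\arcsin\rho\big)$, so it remains to Taylor expand. Introducing the one-dimensional spectral moments $\lambda_{2j}:=(-1)^j r^{(2j)}(0)=(2\pi)^{2j}\int_{\mathbb{S}^1}\cos^{2j}(\theta-u)\,d\nu$, a short expansion yields $1-r(\tau)^2=\lambda_2\tau^2+O(\tau^4)$, the conditional variance $\sigma^2=\tfrac{\lambda_4-\lambda_2^2}{4}\tau^2+O(\tau^4)$, and the off-diagonal entry $\Omega_{12}=-\tfrac{\lambda_4-\lambda_2^2}{4}\tau^2+O(\tau^4)$; thus $\rho\to-1$ as $\tau\to0$.

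The delicate point, and the step I expect to be the main obstacle, is that the bivariate identity is evaluated in the degenerate limit $\rho\to-1$, where $\arcsin$ has a square-root singularity. Setting $\rho=-\cos\alpha$ one checks $\sqrt{1-\rho^2}+\rho\arcsin\rho=\tfrac{\pi}{2}-\tfrac{\pi}{2}(1+\rho)+O\big((1+\rho)^{3/2}\big)$; since $\rho$, $\sigma^2$ and $|\tau|^{-1}\sqrt{1-r^2}$ are even and analytic in $\tau$ with $1+\rho=O(\tau^2)$, the bracket contributes $\tfrac\pi2+O(\tau^2)$ and the first non-analytic correction only enters at order $|\tau|^3$. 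This is exactly what guarantees the clean $K_2(\tau)=a|\tau|+O(|\tau|^3)$ expansion (no $\tau^2$ term) and hence the stated $O(L^5)$ error. Granting this, $K_2(\tau)\sim\frac{1}{2\pi\sqrt{\lambda_2}\,|\tau|}\cdot\sigma^2\cdot1=\frac{\lambda_4-\lambda_2^2}{8\pi\sqrt{\lambda_2}}\,|\tau|$, so $a=\frac{\lambda_4-\lambda_2^2}{8\pi\sqrt{\lambda_2}}$ and $\E{\Z(\Z-1)}=\frac{\lambda_4-\lambda_2^2}{24\pi\sqrt{\lambda_2}}L^3+O(L^5)$.

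Finally I would translate the spectral moments into the geometric quantities in \eqref{2fmasy}. The $\pi/2$-rotation symmetry of any limiting arithmetic measure $\nu$ forces $\widehat{\nu}(2)=0$, whence $\lambda_2=4\pi^2\int\cos^2(\theta-u)\,d\nu=2\pi^2$ independently of $u$. Using $\cos^4\phi=\tfrac38+\tfrac12\cos2\phi+\tfrac18\cos4\phi$ together with $\int\cos4(\theta-u)\,d\nu=\operatorname{Re}\big(e^{-4iu}\widehat{\nu}(4)\big)=\widehat{\nu}(4)\cos(4u)$ (the coefficient $\widehat{\nu}(4)$ being real by the same symmetry) gives $\lambda_4=2\pi^4\big(3+\widehat{\nu}(4)\cos4u\big)$, hence $\lambda_4-\lambda_2^2=2\pi^4\big(1+\widehat{\nu}(4)\cos4u\big)$. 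Substituting $\sqrt{\lambda_2}=\pi\sqrt2$ reproduces exactly $\frac{\sqrt2\,\pi^2}{24}\big(1+\widehat{\nu}(4)\cos4u\big)L^3+O(L^5)$. A subsidiary point to verify is the nondegeneracy $|r(\tau)|<1$ for $\tau\in(0,L]$ needed for Kac--Rice to apply, which holds on a short interval unless all of $\nu$ is orthogonal to the direction $u$, a situation excluded by the assumed symmetry of $\nu$ for $0\le u\le\pi/4$.
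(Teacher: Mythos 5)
Your proposal is correct and follows essentially the same route as the paper: Kac--Rice for the second factorial moment, a small-$\tau$ expansion of the two-point intensity $K_2(\tau)=a|\tau|+O(|\tau|^3)$, and identification of the coefficient via the spectral moments $\lambda_2=2\pi^2$ and $\lambda_4=2\pi^4(3+\widehat\nu(4)\cos 4u)$ (the paper's Lemma \ref{p} in disguise), differing only in that you derive the conditional Gaussian data and the $\rho\to\pm1$ expansion of $\sqrt{1-\rho^2}+\rho\arcsin\rho$ from scratch rather than quoting Rudnick--Wigman's closed form \eqref{K2expl}. Your explicit justification that the first non-analytic correction enters only at order $|\tau|^3$ is a welcome elaboration of a step the paper passes over quickly.
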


%\begin{rem}
%This proposition, as well as Proposition \ref{finerprop} below, are stated in terms of non-rescaled arithmetic waves. For the rescaled field we should change $\sqrt{m}L$ to $L$. That is, instead of fixed interval and decreasing wavelength, we consider growing interval and fixed wavelength.
%\end{rem}
%Proposition \ref{2fmprop} will be proven in Section \ref{2fm}. 
Unlike the expected number of zeroes, the leading coefficient in \eqref{2fmasy} depends on both the angle of the straight line and on the limiting spectral measure $\nu$. We also have the universal  bounds on the leading coefficient:
\begin{equation*}
0\leq\frac{\sqrt{2}\pi^2}{24}\cdot(1+\widehat{\nu}(4)\cos(4u))\leq\frac{\sqrt{2}\pi^2}{12}.
\end{equation*}
The leading coefficient may vanish in two cases: 
$\widehat{\nu}(4)=1$ (Cilleruelo measure) combined with $\cos(4u)=-1$ (diagonal line segments), and $\widehat{\nu}(4)=-1$ (tilted Cilleruelo measure) combined with $\cos(4u)=1$ (horizontal and vertical line segments). In these cases, the second factorial moment obeys the following smaller order asymptotic.

\begin{prop}
	\label{finerprop}
	Let $\{m_k\}_k$ and the corresponding $\mathcal{Z}$ be as in Proposition \ref{2fmprop}. We additionally assume that $\widehat{\nu}(4)\cos(4u)=-1$. Then the second factorial moment of zeroes has the asymptotic
	\begin{equation}
	\label{finer2}
	\E{\Z(\Z-1)}
	=L^5\cdot\frac{\sqrt{2}\pi^4}{450}
	+O(L^7), \quad L\to 0.
	\end{equation}
\end{prop}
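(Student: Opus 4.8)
The plan is to compute $\E{\Z(\Z-1)}$ for the stationary Gaussian process $\psi(t)=\Psi(\gamma(t))$ by means of the Kac--Rice formula for the second factorial moment of its zero count. Writing $r(\tau)=\E{\psi(0)\psi(\tau)}$ for the covariance, the two--point intensity of zeros depends only on the separation $|s-t|$, so the double integral collapses to
\begin{equation*}
\E{\Z(\Z-1)}=2\int_0^L (L-\tau)\,K_2(\tau)\,d\tau ,
\end{equation*}
where $K_2(\tau)$ is the density of pairs of zeros at distance $\tau$. Conditioning the Gaussian vector $(\psi'(0),\psi'(\tau))$ on $\{\psi(0)=\psi(\tau)=0\}$ and inserting the classical formula for $\E{|X||Y|}$ of a centred bivariate Gaussian, I would record the closed form
\begin{equation*}
K_2(\tau)=\frac{\sigma^2(\tau)}{\pi^2\sqrt{1-r(\tau)^2}}\,\bigl(\sqrt{1-\rho(\tau)^2}+\rho(\tau)\arcsin\rho(\tau)\bigr),
\end{equation*}
in which $\sigma^2(\tau)=\lambda_2-r'(\tau)^2/(1-r(\tau)^2)$ is the conditional variance and $\rho(\tau)$ the conditional correlation, both explicit in $r,r',r''$.

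Next I would Taylor expand all ingredients in $\tau$. The inputs are the spectral moments $\lambda_{2k}=(2\pi)^{2k}\int_{\mathbb{S}^1}\cos^{2k}(\phi-u)\,d\nu(\phi)$; the $D_4$--symmetry shared by every limiting measure forces $\widehat{\nu}(2)=\widehat{\nu}(6)=0$, so these reduce to expressions in $\widehat{\nu}(4)$ (and, at higher order, $\widehat{\nu}(8)$), giving $\lambda_2=2\pi^2$ and $\lambda_4=2\pi^4(3+\widehat{\nu}(4)\cos4u)$. Carrying out the expansion yields $K_2(\tau)=\kappa_1\tau+\kappa_3\tau^3+\cdots$ with $\kappa_1$ proportional to $\lambda_4-\lambda_2^2=2\pi^4(1+\widehat{\nu}(4)\cos4u)$, which recovers Proposition \ref{2fmprop}. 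Under the present hypothesis $\widehat{\nu}(4)\cos4u=-1$ one has $\lambda_4=\lambda_2^2$ and hence $\kappa_1=0$, so the leading term disappears and the whole asymptotic is controlled by $\kappa_3$.

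The heart of the argument is therefore the determination of $\kappa_3$. I would expand $\sigma^2(\tau)$, $\rho(\tau)$ and $\sqrt{1-r^2}$ to the first order that survives once $\lambda_4=\lambda_2^2$ is imposed, tracking the combinations of $\lambda_2,\lambda_4,\lambda_6$ (equivalently of $\widehat{\nu}(4)$ at the degenerate value) that do not cancel, and then integrate $K_2(\tau)\sim\kappa_3\tau^3$ against $2(L-\tau)$. Since $\int_0^L(L-\tau)\tau^3\,d\tau=L^5/20$, this produces
\begin{equation*}
\E{\Z(\Z-1)}=\frac{\kappa_3}{10}L^5+O(L^7),
\end{equation*}
and a bookkeeping of the constants should give $\kappa_3=\sqrt2\,\pi^4/45$, hence the stated coefficient $\sqrt2\,\pi^4/450$.

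The main obstacle is the severe degeneracy of this regime. The condition $\lambda_4=\lambda_2^2$ is precisely the equality case of the Cauchy--Schwarz bound $\lambda_2^2\le\lambda_0\lambda_4$, along which the projected spectral measure collapses onto a single pair of atoms; in this limit $\sigma^2(\tau)$ loses its leading $\tau^2$ coefficient, the conditional correlation satisfies $\rho(\tau)\to-1$, and the process becomes (in the limit) a rigid rather than a repelling point configuration. Consequently one cannot simply substitute the degenerate moments: the surviving $O(\tau^4)$ behaviour of the numerator $\sigma^2(\tau)$ and the precise rate at which $\rho(\tau)$ approaches $-1$ must both be retained, which forces the expansion of $r,r',r''$ several orders beyond the non-degenerate case. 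The delicate point will be to control the near-singular factor $\sqrt{1-\rho^2}+\rho\arcsin\rho$ as $\rho\to-1$, using $1+\rho(\tau)=(\sigma^2+\Omega_{12})/\sigma^2$ with $\Omega_{12}$ the conditional cross-covariance, and to verify that after all cancellations the remaining contribution is the universal constant above, with the error terms controlled uniformly along the degenerating family $\{m_k\}$.
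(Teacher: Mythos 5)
Your overall route is the paper's: Kac--Rice for the second factorial moment, the closed form $K_2=\frac{\lambda_2(1-r^2)-r'^2}{\pi^2(1-r^2)^{3/2}}\left(\sqrt{1-\rho^2}+\rho\arcsin\rho\right)$, Taylor expansion of $r,r',r''$ via the moments $\frac{1}{|\Lambda|}\sum_{\Lambda}\langle\lambda,\alpha\rangle^{2k}$ (the paper's Lemma~\ref{p}), and the integration $\iint_{[0,L]^2}|t_1-t_2|^3\,dt_1dt_2=L^5/10$. The problem is that the only nontrivial content of the proposition --- the value of the $\tau^3$ coefficient $\kappa_3$ --- is asserted (``a bookkeeping of the constants should give $\kappa_3=\sqrt{2}\pi^4/45$'') rather than derived. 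The paper's proof \emph{is} that bookkeeping: Proposition~\ref{finerK2}, obtained by pushing the expansions of $\kappa,\kappa',\kappa''$ one order further using the sixth-moment identity \eqref{p6} and evaluating the resulting polynomial in $\widehat{\nu}(4)\cos(4u)$ at $-1$. Without that computation you have shown only that $\E{\Z(\Z-1)}=\kappa_3L^5/10+O(L^7)$ for some unidentified $\kappa_3$.

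Moreover, your own sketch of how the bookkeeping would go contains a concrete misstep. You plan to retain ``the surviving $O(\tau^4)$ behaviour of the numerator $\sigma^2(\tau)$'' using $\lambda_2,\lambda_4,\lambda_6$. But in the equality case $\lambda_4=\lambda_2^2$ the projected limiting measure collapses onto a single pair of atoms, which forces $\lambda_6=\lambda_2^3$ as well (this is exactly what \eqref{p6} gives at $\widehat{\nu}(4)\cos(4u)=-1$); hence $r(\tau)$ agrees with $\cos(\sqrt{\lambda_2}\,\tau)$ through order $\tau^6$, and the numerator $\lambda_2(1-r^2)-r'^2$ --- which vanishes identically for the pure cosine --- is $O(\tau^8)$, not $O(\tau^6)$. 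So the $O(\tau^4)$ term of $\sigma^2$ that you intend to compute is itself zero, and the expansion you outline, carried only to the order you specify, cannot produce a nonzero $\kappa_3$. To land on a definite constant you must either compute the $\tau^3$ coefficient of $K_2$ as an explicit function of $\widehat{\nu}(4)\cos(4u)$ \emph{before} passing to the degenerate value, which is what the paper does in the proof of Proposition~\ref{finerK2}, or else bring the eighth spectral moment $\widehat{\nu}(8)$ honestly into the expansion; your proposal does neither, so as written the key step would stall exactly at the point you yourself identify as ``the heart of the argument''.
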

%Proposition \ref{finerprop} will be proven in Section \ref{2fm}.

\begin{comment}
 
In the case of the Cilleruelo field we can also estimate the second factorial moment in the long interval regime,  when $L\to\infty$. In this case we have a completely different behaviour: the expectation is of order $L$ and the second factorial moment is of order $L^2$. %\footnote{Why this proposition is in terms of $L$ instead of $\sqrt{m}L$?}

We say that a vector $(a_1,a_2)\in\mathbb{R}^2$ is {\em rational} if $a_2/a_1\in\mathbb{Q}$.

\new{PROOF OF THE LEMMA BELOW IS WRONG}
\begin{prop}
	\label{2sb}
	For the Cilleruelo random wave $F$ along a direction $u$ such that $(\cos u, \sin u)$ is nor a rational vector one has
	\begin{equation}
	\label{2sbeqn}
	\E{\Z^2}
	\asymp L^2, \quad L\to\infty.
	\end{equation}
If $u$ is a rational direction, then we can say even more: 
\[
\Var\brb{\Z}\asymp L^2, \quad L\to \infty.
\]
\end{prop}
We shall see also that the condition on $u$ is due to a technical difficulty. It is natural to conjecture that it may be removed.

\end{comment}
%Note, that we have essentially the same field whenever $m=n^2$ and $|\Lambda_m|=2$. In this case the field is just the re-scaled Cilleruelo field and the Proposition above is valid with $L$ replaced by $\sqrt{m}L$. 

Propositions \ref{2fmprop} and \ref{finerprop} %and \ref{2sb}
will be proven in Section \ref{2fm}.

\subsection{Large deviations}
Our next results concern {\bf large deviations} of the zeroes along a straight line.  We are interested in the probability that a large gap occurs between two consecutive zeroes, called the {\bf persistence} probability. This is relevant at an {\em intermediate scale}, as we shall now see. %Recall that $\psi=\Psi(\gamma)$ is the restriction of the toral wave to a straight line of length $L$, where $L$ is allowed to vary, and that $\Z$ denotes the number of zeros of the process. 
We obtain an upper or a lower bound for the persistence in certain scenarios, depending on the relation between the spectral measure $\nu_m$ and the direction of the straight line. 

\begin{prop}
	\label{perlb}
	For fixed $m$ suppose $\nu_m$ has a point mass at $(\cos u,\sin u)$. Then one has
	\begin{equation}
	\label{noi}
	\p(\Z=0)\geq c>0
	\end{equation}
	independent of $L$. 
	
	Moreover, take a subsequence $\{m_k\}$ s.t. $\nu_{m_k}$ has a point mass at $(\cos u,\sin u)$ for all sufficiently big $m$, and $r_2(m_k)\to\infty$. Then for every $\epsilon>0$  there are constants $C_1$ and $C_2$ which are independent of $L$ such that for all sufficiently large $r_2(m)$
	\begin{equation}
	\label{noi2}
	-\log\p(\Z=0)<C_1 r_2(m)< C_2 m^\epsilon.
	\end{equation}
\end{prop}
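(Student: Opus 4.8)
The plan is to exhibit in both parts an explicit positive-probability event on which the restricted field $\psi:=\Psi_m\circ\gamma$ is of one sign on the whole line, so that a fortiori $\Z=0$ for \emph{every} $L$; the two parts then differ only in how carefully the probability of this event is quantified. The structural input is the symmetry of the spectral measure. If $\nu_m$ has an atom at $(\cos u,\sin u)$, then $\sqrt m(\cos u,\sin u)\in\Lambda_m$, and applying the rotation $(a,b)\mapsto(-b,a)$, which preserves $\mathbb Z^2$ and the circle $|\lambda|^2=m$, produces a lattice point $\mu_0=\sqrt m(-\sin u,\cos u)\in\Lambda_m$ orthogonal to the direction of $\gamma$. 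Its frequency along $\gamma$ is $2\pi\langle\mu_0/\sqrt m,(\cos u,\sin u)\rangle=0$, so the pair $\pm\mu_0$ contributes a \emph{constant} term $A=2b_{\mu_0}/\sqrt{r_2(m)}$ to $\psi$, where $b_{\mu_0}=\mathrm{Re}\,a_{\mu_0}\sim\N(0,1/2)$ is independent of all remaining coefficients. Writing $\psi(t)=A+R(t)$ with $R$ the sum over $\lambda\neq\pm\mu_0$, I would use the deterministic bound $\|R\|_\infty\le r_2(m)^{-1/2}\sum_{\lambda\neq\pm\mu_0}|a_\lambda|=:\rho$, which is finite and independent of $t$, hence of $L$; on the event $\{A>\rho\}$ one has $\psi>0$ everywhere, so $\Z=0$.

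For Part 1 ($m$ fixed) this already suffices. Since $b_{\mu_0}$ is independent of $\rho$ and has unbounded Gaussian support, $\mathbb P(A>\rho\mid\rho)>0$ for a.e. realization of the finite random variable $\rho$, and integrating gives $\mathbb P(\Z=0)\ge\mathbb P(A>\rho)>0$. This is \eqref{noi} with $c=c(m)$ independent of $L$.

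For Part 2 I would recast the event geometrically. The map (coefficients)$\mapsto\psi$ is linear, the coefficient vector is an isotropic Gaussian in $\R^{d}$ with $d=r_2(m)$, and $G:=\{\psi>0\text{ on }\R\}$ is an open convex cone; hence $\mathbb P(\Z=0)\ge\mathbb P(G)$ equals the normalized solid angle of $G$. Writing $\psi_a(t)=\langle a,\phi(t)\rangle$, unit variance gives $\|\phi(t)\|_2\equiv\sqrt2$, so $\|\psi_v\|_\infty\le\sqrt2\,\|v\|_2$; thus I can lower-bound the angle of $G$ by producing a template $a^\ast$ with $\psi_{a^\ast}\ge\delta>0$ and the robustness neighbourhood $\{\,\|a-a^\ast\|_2<\delta/\sqrt2\,\}\subset G$. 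Concentrating an $O(r_2)$ share of the $\ell^2$-budget on the constant mode $b_{\mu_0}$ and estimating the Gaussian mass of this neighbourhood yields $\mathbb P(G)\ge e^{-C_1 r_2(m)}$, which is the first inequality of \eqref{noi2}; the second is immediate from $r_2(m)=O(m^\epsilon)$ in \eqref{Nsmall}.

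The hard part is extracting the exponent \emph{linear} in $r_2(m)$. The time-average identity $\min_t\psi_a(t)\le\overline{\psi_a}=A\le 2\|a\|_2/\sqrt{r_2}$ shows that no unit template has minimum exceeding $2/\sqrt{r_2}$, so $G$ contains no spherical cap of fixed angular radius; a naive ball or cap estimate therefore only produces $\exp(-Cr_2\log r_2)$, losing a logarithm. Obtaining the sharp linear rate requires exploiting the full width of the cone rather than a single cap, for instance a block decomposition of the $\sim r_2$ frequencies into near-independent positivity constraints, each costing an $O(1)$ factor. I note, however, that even the weaker $\exp(-Cr_2\log r_2)$ bound still gives $-\log\mathbb P(\Z=0)<C_2 m^\epsilon$ via \eqref{Nsmall}, so the final displayed conclusion is robust to this point.
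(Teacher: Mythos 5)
Your first part coincides with the paper's argument: the $\pi/2$-rotation invariance of $\Lambda_m$ converts the atom at $(\cos u,\sin u)$ into an atom at the orthogonal point $\mu_0=\sqrt m(-\sin u,\cos u)$, the pair $\pm\mu_0$ contributes the constant $2b_{\mu_0}/\sqrt{r_2(m)}$ to $\psi_u$, and on the event that this constant dominates $r_2(m)^{-1/2}\sum_{\lambda\neq\pm\mu_0}|a_\lambda|$ the restricted process has no zeros whatever $L$ is. For fixed $m$ this event has positive probability, which gives \eqref{noi}. This is correct and is exactly how the paper proves the first claim.

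For the second part there is a genuine gap: you do not establish the first inequality of \eqref{noi2}. By your own account the cone-plus-template argument only yields $\P(\Z=0)\ge\exp(-Cr_2\log r_2)$, and the suggested repair (a block decomposition into ``near-independent positivity constraints'') is not carried out and looks unworkable as stated: only one block can contain the constant Fourier mode, and every other block is a mean-zero stationary almost-periodic Gaussian process which almost surely changes sign on $\mathbb{R}$, so ``each block positive'' is not a positive-probability event uniformly in $L$. The paper's route is much shorter: it keeps the same dominating event as in part one, treats $\sum_{\lambda}\sqrt{b_\lambda^2+c_\lambda^2}$ as a sum of i.i.d.\ $\chi(2)$ variables, replaces it via the CLT by $r_2(m)+\sqrt{r_2(m)}\,Z_1$, and lower-bounds the probability that this falls below $|a_{\tilde\lambda}|$ by a Gaussian tail computation, arriving at $\P(\Z=0)\gtrsim e^{-r_2(m)/2}/\sqrt{r_2(m)}$. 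You should know, however, that your instinct that the linear exponent is ``the hard part'' is well founded: the paper's CLT step is invoked at a deviation of order $\sqrt{r_2(m)}$ standard deviations below the mean, a large-deviation regime where the normal approximation is not justified for nonnegative summands; a Chernoff bound on the left tail of a sum of $n$ i.i.d.\ $\chi(2)$ variables below a bounded level gives $\exp(-\Theta(n\log n))$ --- precisely the rate your ball estimate produces. So the portion of \eqref{noi2} that both your argument and the paper's deliver on solid ground is $-\log\P(\Z=0)=O(r_2(m)\log r_2(m))=O(m^\epsilon)$ via \eqref{Nsmall}; the strictly linear bound $C_1r_2(m)$ is not rigorously supplied by either argument.
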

To be more precise, we can take $C_1$ to be any constant such that $0<C_1<1/2$.

%Proposition \ref{perlb} will be proven in Section \ref{per}. We say that a vector $(a_1,a_2)\in\mathbb{R}^2$ is {\em rational} if $a_2/a_1\in\mathbb{Q}$. We now state a persistence upper bound.

\begin{prop}
	\label{perub}
	For fixed $m$ suppose $\nu_m$ does not have a point mass at $(\cos u,\sin u)$. Then we have the upper bound
	\begin{equation}
	\log\p(\Z=0)=O(-L^2) \quad \text{ as } L\to \infty .
	\end{equation}
	%independent of the specific limiting spectral measure.
\end{prop}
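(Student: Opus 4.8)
The plan is to restrict the wave to the line and read off a spectral gap from the hypothesis. Writing $\psi(t)=\Psi(\gamma(t))=\frac{1}{\sqrt{r_2(m)}}\sum_{\lambda\in\Lambda_m}a_\lambda e^{2\pi i\xi_\lambda t}$, the frequencies are $\xi_\lambda=\langle\lambda/\sqrt m,(\cos u,\sin u)\rangle=\cos(\theta_\lambda-u)$, where $\theta_\lambda=\arg\lambda$. The first step, and the place where the hypothesis enters, is the observation that $\Lambda_m$ is invariant under rotation by $\pi/2$ (which preserves both $\mathbb{Z}^2$ and the norm): hence $\lambda\in\Lambda_m$ with $\theta_\lambda=u$ exists if and only if its rotate $\lambda^\perp\in\Lambda_m$ with $\theta_{\lambda^\perp}=u+\pi/2$ exists. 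Since $\xi_\lambda=1\iff\theta_\lambda=u$ and $\xi_{\lambda^\perp}=0\iff\theta_{\lambda^\perp}=u+\pi/2$, a point mass of $\nu_m$ at $(\cos u,\sin u)$ is \emph{equivalent} to the presence of the zero frequency $\xi=0$. Thus the assumption that $\nu_m$ has no point mass at $(\cos u,\sin u)$ says exactly that $\psi$ has no constant (DC) component, and as $\Lambda_m$ is finite we obtain $\xi_{\min}:=\min_\lambda|\xi_\lambda|>0$. This spectral gap at the origin is the whole point.

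By continuity of $\psi$ the event $\{\Z=0\}$ is $\{\psi>0 \text{ on } [0,L]\}\cup\{\psi<0 \text{ on } [0,L]\}$, so $\p(\Z=0)=2\,\p(E)$ with $E=\{\psi>0 \text{ on } [0,L]\}$. On $E$ the gap forces an incompatibility between the size and the sign of $\psi$. Because $\xi_\lambda\neq0$, the process has a bounded stationary antiderivative, so $\big|\int_0^L\psi\big|$ is bounded by a multiple of $r_2(m)^{-1/2}\sum_\lambda|a_\lambda|$, uniformly in $L$; meanwhile the energy grows linearly, $\int_0^L\psi^2=\frac{\|a\|^2}{r_2(m)}L+O(1)$, the off-diagonal terms being $O(1)$ since the finitely many frequency differences are bounded away from $0$. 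Since $\psi\ge0$ on $E$, the elementary inequality $\int_0^L\psi^2\le(\sup_{[0,L]}\psi)\int_0^L\psi$ holds, and combining the three estimates gives $\sup_{[0,L]}\psi\gtrsim\|a\|\,L/r_2(m)$; in particular $\sup_{[0,L]}\psi\gtrsim L$ once $\|a\|$ is bounded below.

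There are then two ways to conclude. Since $E$ depends only on the direction $\hat a=a/\|a\|$, the quantity $\p(E)$ is the normalized spherical measure of the cone $\{\hat a:\psi_{\hat a}>0\text{ on }[0,L]\}$; on the unit sphere the bound just derived reads $\sup\psi_{\hat a}\gtrsim L$, which is incompatible with the uniform estimate $\sup_{[0,L]}\psi_{\hat a}=O(1)$ once $L>L^\ast(m)$, so the cone is empty, $\p(\Z=0)=0$, and $\log\p(\Z=0)=O(-L^2)$ holds trivially for large $L$. The more robust route, which produces the quoted $L^2$ rate directly, keeps $\psi$ at unit pointwise variance and invokes Borell--TIS: $\p(\sup_{[0,L]}|\psi|\ge t)\le\exp(-c(t-\mathbb{E}\sup_{[0,L]}|\psi|)^2)$ with $\mathbb{E}\sup_{[0,L]}|\psi|=O(\sqrt{\log L})$, so the deterministic bound $\sup_{[0,L]}\psi\gtrsim L$ on $E\cap\{\|a\|\ge\epsilon\}$ yields $\p(E\cap\{\|a\|\ge\epsilon\})\le\exp(-cL^2)$, and scale invariance (which makes $E$ independent of $\|a\|$) absorbs the event $\{\|a\|<\epsilon\}$. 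The crux, and the only genuinely nontrivial point, is the deterministic inequality $\sup_{[0,L]}\psi\gtrsim L$: upgrading the qualitative fact that a nonnegative DC-free function of bounded energy cannot remain nonnegative on a long interval into a bound with constants uniform over realizations, which is exactly where the spectral gap $\xi_{\min}>0$ is used.
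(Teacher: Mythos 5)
Your reduction to a spectral gap is exactly the paper's first (and only substantive) step: by the invariance of $\Lambda_m$ under rotation by $\pi/2$, the absence of an atom of $\nu_m$ at $(\cos u,\sin u)$ is equivalent to the projected spectral measure of $\psi_u$ having no atom at the origin, hence to a gap $(-A_m,A_m)$ with $A_m>0$ since the support is finite. At that point, however, the paper simply cites Theorem \ref{sgc} (\cite[Theorem 1]{ffjnn1}), which yields $\p(\Z=0)\le\exp(-\bar c A_m^2L^2)$ immediately. You instead attempt to reprove that persistence bound from scratch for the finite exponential sum, and the self-contained argument has a genuine gap.

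The flaw is the energy estimate $\int_0^L\psi^2=\frac{\|a\|^2}{r_2(m)}L+O(1)$: the ``off-diagonal'' terms are $O(1)$ only if the frequencies $\xi_\lambda$ are pairwise distinct, but distinct lattice points routinely project to the same frequency (for $u=0$ the pair $(\lambda_1,\lambda_2)$, $(\lambda_1,-\lambda_2)$ always collides, and under your hypothesis $\lambda_2\neq 0$ for every $\lambda$, so every point lies in such a pair). Grouping by frequency, the correct statement is $\int_0^L\psi^2=L\sum_\xi|d_\xi|^2+O(1)$ with $d_\xi$ the sum of the coefficients sharing frequency $\xi$, and $\sum_\xi|d_\xi|^2$ is \emph{not} bounded below by a multiple of $\|a\|^2$: the independent coefficients within a class can nearly cancel, an event of positive (only polynomially small) probability. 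Hence the deterministic bound $\sup_{[0,L]}\psi\gtrsim\|a\|L$ fails on a positive-probability set; route (a)'s conclusion that the cone on the sphere is empty for $L>L^*(m)$ does not follow (the cone is only forced into $\{\sum_\xi|d_\xi|^2\le C(m)/L\}$, which has positive measure), and route (b)'s Borell--TIS step then does not cover all of $E$. Closing this hole would require combining a small-ball estimate for the quadratic form $\sum_\xi|d_\xi|^2$ with the tail bound across scales, which is essentially the content of the cited theorem; so the honest options are either to restrict to directions with no frequency collisions (where your argument does work) or to invoke Theorem \ref{sgc} as the paper does.
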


Propositions \ref{perlb} and \ref{perub} will be proven in Section \ref{per}. 

Note that these propositions allow to  detect atoms of $\nu_m$ by considering the persistence probability in a given direction. To prove these propositions, we will make considerations about {\em spectral gaps} -- see Section \ref{per} for details.

When the limiting spectral measure has a nice form, we obtain sharp bounds for the persistence probability at the intermediate scale. The \mbox{weak-*} partial limits of $\{\nu_{m}\}$ (`attainable measures') were partially classified in \cite{krkuwi, kurwig}. Among other results, one has \cite{kurwig} the following: for every $\theta\in[0,\pi/4]$, the measure $\sigma_\theta$ that is uniform on the union of four arcs
\begin{equation*}
\{z\in\mathbb{S}^1 : \arg(z)\in\cup_{j=0}^{3}[j\cdot\pi/2-\theta,j\cdot\pi/2+\theta]\}
\end{equation*}
is attainable. In particular, when $\theta=0$ and $\theta=\pi/4$ we recover the Cilleruelo and Lebesgue measures respectively. We are now ready to state our next result.

\begin{prop}
	\label{persb}
	Fix $0\leq u\leq\pi/4$ and suppose that the  spectral measure of $\Psi$ is $\sigma_\theta$ with $\theta\neq 0$. Then we have
	\begin{equation}
	\label{tr1}
	\log\p(\Z=0)\asymp -L
	\qquad\qquad\text{ if }
	u\leq\theta
	\end{equation}
	and
	\begin{equation}
	\label{tr2}
	\log\p(\Z=0)\asymp -L^2
	\qquad\qquad\text{ if }
	\theta<u.
	\end{equation}
\end{prop}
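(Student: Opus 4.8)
\section*{Proof proposal}

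The plan is to reduce everything to the one-dimensional process $\psi(t)=\Psi(\gamma(t))$ and to read off the persistence exponent from the behaviour of its spectral measure at the origin. First I would record that $\psi$ is a centred stationary Gaussian process on $\mathbb{R}$, and that by symmetry $\p(\Z=0)=2\,\p(\psi>0 \text{ on }[0,L])$. Its spectral measure $\mu=\mu_{u,\theta}$ is the push-forward of $\sigma_\theta$ under the projection $\xi=(\cos\phi,\sin\phi)\mapsto\langle\xi,(\cos u,\sin u)\rangle=\cos(\phi-u)$, so that $r_\psi(t)=\int_{-1}^{1}\cos(ts)\,d\mu(s)$ with $\mu$ symmetric and supported in $[-1,1]$. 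Since $\sigma_\theta$ has a bounded density on its arcs and $\phi\mapsto\cos(\phi-u)$ has non-degenerate derivative away from $\phi\equiv u\pmod\pi$, the measure $\mu$ is absolutely continuous with a density that is bounded on every neighbourhood of $0$ (its only non-integrable features are inverse-square-root edge singularities, located away from the origin).

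Next I would locate the spectral mass of $\mu$ near frequency $0$. A frequency $s=0$ arises exactly from directions $\xi$ orthogonal to $(\cos u,\sin u)$, i.e. $\phi=u\pm\pi/2$; such $\phi$ lies in $\operatorname{supp}\sigma_\theta$ (the arc around $\pi/2$) if and only if $u\le\theta$. Hence, using $0\le u\le\pi/4$ and $\theta<\pi/4$, one obtains the clean dichotomy: if $u\le\theta$ then $\mu$ has strictly positive bounded density on a neighbourhood of the origin, whereas if $\theta<u$ then $\mu$ vanishes on $(-\delta,\delta)$ with $\delta=\sin(u-\theta)>0$, i.e. $\psi$ has a spectral gap at the origin. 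This is precisely the criterion separating \eqref{tr1} from \eqref{tr2}, and it matches the support condition underlying Propositions \ref{perlb} and \ref{perub}: for $\theta<u$ the direction $(\cos u,\sin u)$ is not charged by $\sigma_\theta$.

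For $\theta<u$ I would derive \eqref{tr2} from the spectral-gap-at-the-origin mechanism. The upper bound $\log\p(\Z=0)\le -cL^2$ follows by the spectral-gap method of Proposition \ref{perub}, whose operative hypothesis here is the gap of $\mu$ at the origin just established. For the matching lower bound $\log\p(\Z=0)\ge -CL^2$ I would use that $\mu$ is compactly supported with bounded density: tilting the law of $\psi$ by a fixed smooth positive band-limited profile on $[0,L]$ costs a Cameron--Martin norm of order $L^2$, producing a positivity event of probability at least $e^{-CL^2}$. This is the standard ``spectral gap $\Rightarrow$ persistence exponent $2$'' picture.

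For $u\le\theta$ I would prove the exponential rate \eqref{tr1}. The upper bound $\log\p(\Z=0)\le -cL$ comes from mixing: as $\mu$ is absolutely continuous, $r_\psi(t)\to0$ by Riemann--Lebesgue, so over $\sim L$ unit blocks the events ``a sign change occurs'' are asymptotically independent with probability bounded below, giving $\p(\Z=0)\le e^{-cL}$. The lower bound $\log\p(\Z=0)\ge -CL$ is the delicate point and the main obstacle: I would split $\psi=\psi_{\mathrm{lo}}+\psi_{\mathrm{hi}}$ by the frequencies in $(-\delta,\delta)$, where $\psi_{\mathrm{lo}}$ carries genuine mass (the positive density at the origin found above) and is slowly varying, force $\psi_{\mathrm{lo}}$ above a level on $[0,L]$ at exponential cost $e^{-CL}$, and absorb $\psi_{\mathrm{hi}}$. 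The difficulty is to obtain the correct exponent: controlling $\psi_{\mathrm{hi}}$ uniformly on a growing interval naively forces a $\sqrt{\log L}$ threshold, hence only $e^{-CL\log L}$, and removing this logarithm requires exploiting the whole low-frequency band rather than a single mode, i.e. invoking the general statement that a compactly supported spectral measure with positive density at the origin has persistence exponent $1$. The borderline $u=\theta$ (density touching zero at the origin) I would fold into this case by monotonicity of the persistence probability in the spectral measure.
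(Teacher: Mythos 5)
Your reduction is exactly the paper's: push $\sigma_\theta$ forward under $\phi\mapsto\cos(\phi-u)$, observe that the projected spectral measure of $\psi_u$ is absolutely continuous with support a union of intervals (cf.\ \eqref{supp}), and that it charges a neighbourhood of the origin precisely when $u\le\theta$, while for $\theta<u$ it has the gap $(-\sin(u-\theta),\sin(u-\theta))$. That dichotomy, together with the upper bound $\log\p(\Z=0)\lesssim -L^2$ in the gap case via the spectral-gap theorem (Theorem \ref{sgc}, as in Proposition \ref{perub}), is the part of your argument that is complete and matches the paper.

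The gaps are in the three remaining persistence estimates, all of which the paper closes by citation rather than by the sketches you give. (1) For $\theta<u$, the lower bound $\p(\Z=0)\ge e^{-CL^2}$: Cameron--Martin tilting is the right idea, but the entire content is the existence of a profile $h\ge 1$ on $[0,L]$, band-limited to the support of the projected measure (which excludes a neighbourhood of $0$), with RKHS norm of order $L$; you assert the $L^2$ cost without constructing $h$. Note that a naive count ($f$ bounded below on the band, $\|h\|_{L^2}^2\asymp L$) would give $\|h\|_H^2=O(L)$ and hence $e^{-cL}$, which would contradict your own upper bound --- so the construction is genuinely the crux, not a routine step. The paper takes this from Theorem \ref{lb} (\cite[Corollary 1]{ffjnn1}), which applies verbatim since the projected measure is compactly supported with a non-trivial absolutely continuous component. (2) For $u\le\theta$, the upper bound $e^{-cL}$: Riemann--Lebesgue decay of the covariance does not make unit blocks ``asymptotically independent'', and you supply no quantitative decorrelation-to-independence mechanism. (3) For $u\le\theta$, the lower bound $e^{-CL}$: you candidly note that your low/high frequency splitting loses a $\log L$ and that removing it ``requires invoking the general statement'' --- that general statement is precisely \cite[Theorem 1]{fefeni}, which the paper cites to dispatch both (2) and (3) at once; its hypothesis (spectral density continuous and non-vanishing near the origin) is exactly what your projection computation establishes for $u\le\theta$, including the borderline $u=\theta$. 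So the route is the same as the paper's; what is missing is to recognise these three estimates as known theorems of Feldheim--Feldheim--(Jaye--Nazarov--)Nitzan and cite them, rather than leaving their proofs open.
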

Proposition \ref{persb} will be proven in Section \ref{per}. We will show how the phase transition between \eqref{tr1} and \eqref{tr2} comes from the existence of a spectral gap.

We now turn our attention to the Cilleruelo field $F$ \eqref{cilfie}, a case excluded from Proposition \ref{persb}. For $0\leq u\leq\pi/4$ consider the restriction
\begin{equation}
\label{rcf}
f_u(t)=F(t\cos(u),t\sin(u)),
\qquad\qquad
t\in[0,L],
\end{equation}
where $L$ is allowed to vary. Denote $\mathcal{Z}_f$ the zeroes of this process. Due to the normalisation \eqref{expectold} now reads
\begin{equation}
\label{expect}
	\E{\Z_f}=\frac{L}{\pi\sqrt{2}}.
\end{equation}
Since \eqref{rcf} is a stationary process, the expected number of zeroes is independent of the direction $u$. However, the process itself is very far from being rotation invariant. We then seek observables that depend on the direction $u$. We start by formulating several precise statements for the persistence probability of \eqref{rcf} at the intermediate scale, depending on $u$.
\begin{prop}
	\label{percill}
	There exist positive constants $C_n$ such that for the restricted Cilleruelo field $f_u$, $0\leq u\leq\pi/4$, we have
	\newcounter{pippo}
	\setcounter{pippo}{\value{equation}}
	\renewcommand{\theequation}{\roman{equation}}
	\setcounter{equation}{0}
	\begin{numcases}{\p(\Z_f=0)}
	=1-\frac{\sqrt{2}}{2}
	&
	$u=0,\ L\geq 2\pi$; \label{i}
	\\
	\geq C_0>0
	&
	$Lu\shortarrow{1}C_1<\pi/2$;
	\label{ii}
	\\
	\geq C_2\exp(-\frac{C_3}{(Lu-\pi/2)^2})
	&
	$Lu\shortarrow{1}\pi/2$;
	\label{iii}
	\\
	\leq\exp(-C_4L^2\sin(u)^2)
	&
	$u\text{ fixed,} \ 0<u<\pi/4$;
	\label{iv}
	\\
	\geq C_5(\epsilon)>0
	&
	$L\sin(u)<\pi-\epsilon$;
	\label{v}
	\\
	\leq C_6(\epsilon)<1
	&
	$L\sin(u)>\pi/2+\epsilon$;
	\label{vi}
	\\
	=0
	&
	$u>0, L\sin(u)\geq 2\pi$.
	\label{vii}
	%\\
	%<=1-L\sin(u)/\pi
	%&
	%$u=\pi/4, \ L\sin(u)\leq\pi$;
	%\label{viii}
	%\\
	%=0
	%&
	%$u=\pi/4, L\geq\pi\sqrt{2}$ %, \ L\sin(u)\geq\pi$.
	%\label{ix}
	%
	\end{numcases}
	\setcounter{equation}{\value{pippo}}
\end{prop}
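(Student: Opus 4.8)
Write $\omega_1=\cos u\ge\omega_2=\sin u$, set $G=(b_1,c_1,b_2,c_2)\sim\N(0,I_4)$ and $v(t)=\tfrac1{\sqrt2}\br{\cos\omega_1 t,\sin\omega_1 t,\cos\omega_2 t,\sin\omega_2 t}$, so that $f_u(t)=\langle G,v(t)\rangle$ with $|v(t)|\equiv1$. Since $G$ is isotropic, $f_u$ keeps a constant sign on $[0,L]$ exactly when $G$ lies in the dual cone $K_+=\{x:\langle x,v(t)\rangle>0\ \forall t\in[0,L]\}$ or in $K_-=-K_+$, whence $\p(\Z_f=0)=2\,\mu(K_+\cap\mathbb{S}^3)/\mu(\mathbb{S}^3)$ is a normalised solid angle. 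I will also use the amplitude--phase form $f_u(t)=\tfrac{\sqrt2}{2}\br{A_1\cos(\omega_1t-\phi_1)+A_2\cos(\omega_2t-\phi_2)}$, with $A_i$ Rayleigh and $\phi_i$ uniform, all independent, together with the elementary remark that $f_u$ has a zero whenever the summand of larger amplitude attains both its maximum and its minimum on $[0,L]$. Each part of the proposition is then an estimate on how this solid angle behaves as the curve $v([0,L])$ winds around the Clifford torus by the angles $\br{\omega_1L,\omega_2L}$.

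\textbf{Exact values and the one-sided constant bounds.} For \eqref{i}, $u=0$ reduces the low-frequency block to a constant, $f_0(t)=\tfrac{\sqrt2}{2}\br{A_1\cos(t-\phi_1)+b_2}$; once $L\ge2\pi$ a whole period is traced, so there is no zero iff $|b_2|>A_1$, and the $45^\circ$ double cone $\{b_2^2>b_1^2+c_1^2\}\subset\R^3$ has probability $1-\tfrac{\sqrt2}{2}$. For \eqref{vii}, $L\sin u\ge2\pi$ forces a full period in both blocks; the larger-amplitude summand then attains both extremes, so $f_u$ changes sign almost surely (the amplitudes agree with probability $0$), $K_+=\emptyset$, and the probability is $0$. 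The lower bounds \eqref{v} and \eqref{ii} come from the sufficient event $\{\min_{[0,L]}g_2>A_1\}$, where $g_2(t)=A_2\cos(\omega_2t-\phi_2)$: on it $f_u\ge\tfrac{\sqrt2}{2}(g_2-A_1)>0$ throughout. This asks that $A_1/A_2$ be small and $\phi_2$ centred so that the arc of length $L\sin u$ stays inside $\{\cos>A_1/A_2\}$, an event of probability bounded below as long as $L\sin u<\pi-\epsilon$; since $Lu\to C_1<\pi/2$ forces $L\sin u<\pi/2$, part \eqref{ii} is the special case $C_0=C_5(\pi/2)$. The upper bound \eqref{vi} is the dual statement: as $L\sin u>\pi/2+\epsilon$, one may pick $t_a,t_b\in[0,L]$ with $\omega_2(t_b-t_a)=\pi/2$, at which the low-frequency parts are uncorrelated, so $(f_u(t_a),f_u(t_b))$ is a non-degenerate Gaussian pair with correlation $\tfrac12\cos\br{\omega_1(t_b-t_a)}$ of modulus $<1$; hence $\p\br{f_u(t_a)>0>f_u(t_b)}\ge c(\epsilon)>0$ forces a sign change and gives $C_6(\epsilon)=1-2c(\epsilon)<1$.

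\textbf{The envelopes near the thresholds and the main obstacle.} Parts \eqref{iv} and \eqref{iii} are the delicate ones. For \eqref{iv} I would use that $\p(\Z_f=0)$ is non-increasing in $L$, equals $0$ once $L\sin u\ge2\pi$ by \eqref{vii}, and at small scales has a deficit bounded below by a two-point estimate (the correlation $\rho(t)=\tfrac12[\cos\omega_1 t+\cos\omega_2 t]$ gives $\p(\text{sign change on }[0,t])\gtrsim \arccos\rho(t)$); writing $s=L\sin u$, the function $-\log\p(\Z_f=0)/s^2$ is then continuous and positive on $(0,2\pi)$ and blows up at both ends, so it has a positive minimum, which is the constant $C_4$. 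For \eqref{iii} the low-frequency domination event of \eqref{v} degenerates as $Lu\to\pi/2$: keeping $f_u$ positive then requires an increasingly favourable configuration (a progressively smaller admissible window for $\phi_2$ together with an amplitude $A_2$ forced to be of order $1/\delta$, $\delta=|Lu-\pi/2|$), and the Gaussian tail $\p(A_2>R)=e^{-R^2/2}$ produces the factor $\exp\br{-C_3/\delta^2}$. The main obstacle is precisely these two quantitative solid-angle estimates: in contrast with \eqref{i} and \eqref{vii}, where the geometry is exact, one must control the \emph{rate} at which the cone $K_+$ opens or closes as the two frequencies $\cos u,\sin u$ are varied, and must do so uniformly in $u$, so that the constants $C_3,C_4$ are genuinely independent of the direction.
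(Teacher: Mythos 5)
Most of your argument tracks the paper's: parts \eqref{i}, \eqref{ii}, \eqref{v}, \eqref{vi} and \eqref{vii} rest on essentially the same domination and periodicity events the paper uses (your isotropic solid-angle computation for \eqref{i}, the ``dominant summand attains both extremes'' phrasing of \eqref{vii}, and the two-point orthant bound for \eqref{vi} are clean, correct variants). The genuine gap is in part \eqref{iv}. The paper's proof is one line once the right structure is noticed: for $0<u<\pi/4$ the spectral measure of $f_u$ is $\tfrac14(\delta_{\pm\cos u}+\delta_{\pm\sin u})$, which vanishes on $(-\sin u,\sin u)$, so the spectral-gap persistence theorem of Feldheim--Feldheim--Jaye--Nazarov--Nitzan (Theorem \ref{sgc}, i.e.\ \cite[Theorem 1]{ffjnn1}), already invoked for Proposition \ref{perub}, gives $\p(\Z_f=0)\le\exp(-c\sin^2(u)L^2)$ with an \emph{absolute} constant. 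Your substitute --- monotonicity and continuity of $L\mapsto\p(\Z_f=0)$ plus blow-up of $-\log\p(\Z_f=0)/(L\sin u)^2$ at both ends of $(0,2\pi)$ --- does, for each fixed $u$, produce some $C_4(u)>0$, but it is a pure compactness argument: it identifies no mechanism, yields no explicit or uniform constant, and you yourself concede that uniformity in $u$ is ``the main obstacle''. Since the proposition quantifies the constants before $u$, this is precisely the step you have not supplied; the missing idea is the spectral gap.

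Separately, your sketch of \eqref{iii} is internally inconsistent with your own proof of \eqref{v}. In the regime $Lu\shortarrow{1}\pi/2$ one has $L\sin u\le Lu<\pi/2<\pi$, so the phase-centred domination event you built for \eqref{v} (the arc of length $L\sin u$ placed symmetrically inside $\{\cos>A_1/A_2\}$) does \emph{not} degenerate: the admissible window for $\phi_2$ keeps fixed positive measure and $A_2$ need not be large, so your \eqref{v} already yields a \emph{constant} lower bound here, which trivially implies \eqref{iii}. The degeneration you describe (shrinking phase window, $A_2\sim1/\delta$, hence the tail $\exp(-C_3/\delta^2)$) occurs only if, as in the paper's proof, one does not recentre the phase and must demand $|b_2|\gtrsim k/\cos(L\sin u)$. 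So the conclusion of \eqref{iii} is fine, but the mechanism you give for the shape $\exp(-C_3/(Lu-\pi/2)^2)$ is not what happens under your own construction.
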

The proof of Proposition \ref{percill} can be found in Section \ref{per}.

\subsection{Cilleruelo type fields}
We would like to return back to Figure \ref{pic3} which shows a sample of a field whose spectral measure is close to the Cilleruelo spectral measure. It is easy to see that on small intermediate scales this field looks like the Cilleruelo field. In this section we give a rigorous quantitative statement about this `similarity'.

%In this section we formulate the main theorem of the paper. In order to test the robustness of Proposition \ref{percill} for the Cilleruelo field $F$ \eqref{cilfie}, we analyse the behaviour of a `Cilleruelo type' field $G$. 

Let $G$ be a stationary Gaussian field such that its spectral measure is purely atomic with all atoms having the same mass\footnote{This assumption is not crucial, but it makes some computations a bit simpler and it holds in the context of arithmetic waves.},  symmetric w.r.t. rotations by $\pi/2$ around the origin and supported on $\mathbb{S}^1$. Let $N=4M$ be the number of atoms and that are at points $y_j=(\cos(\varphi_j),\sin(\varphi_j))$,  $j=1,\dots,N$. We assume that $-\epsilon<\varphi_j<\epsilon$ for each $j=1,\dots,M$ for some fixed $\epsilon>0$. Then $G$ will be completely determined, via the symmetries, once we fix the $M$ atoms $\epsilon$-close to the point $(1,0)$. We may write explicitly
\begin{equation}
\label{G}
G(x)=\frac{\sqrt{2}}{2}\left[\sum_{j=1}^{2M}b_j\cos(\langle x,y_j\rangle)+\sum_{j=1}^{2M}c_j\sin(\langle x,y_j\rangle)\right],
\end{equation}
\begin{comment}
\begin{multline}
\label{G}
2G(x):=G_0+G_1+G_2+G_3,
\\
G_k(x):=\sum_{j=1}^{M}c_j e^{i|x|\cos(\theta+k\pi/2+\varphi_j)},
\qquad\quad
k=1,2,3
\qquad\quad
\end{multline}
\end{comment}
where $b_j,c_j\sim\N(0,2/N)$ for $j=1,\dots,2M$ and i.i.d. The  main goal is to show that such fields on intermediate scales look like the Cilleruelo field. To be more precise, we establish a natural coupling between such a field and the Cilleruelo field in such a way that the coupled field are close to each other with large probability.
%We are now considering the fields themselves rather than their restrictions to a line.
\begin{thm}
\label{discrete}
Let $F$ be the Cilleruelo field and $G$ be a field  as above. Then there is an absolute constant $c$, and a coupling of the fields such that for all $R\ge 2c$
\[
\P\brb{\|F-G\|_{C(B_R)}\ge 2\epsilon R\log R}\le \exp(-\log^2 R/c)
\] 
and
\[
\P\brb{\|F-G\|_{C(B_R)}\ge 2\epsilon R^2 }\le \exp(-R^2/c).
\] 	
\end{thm}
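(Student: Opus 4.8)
The plan is to construct the coupling explicitly and then control the difference $D:=F-G$ as a single centred Gaussian field by a concentration argument. Group the $2M$ frequencies $y_1,\dots,y_{2M}$ of $G$ into the cluster $j=1,\dots,M$ lying within angle $\epsilon$ of $e_1=(1,0)$ and the cluster $j=M+1,\dots,2M$ lying within $\epsilon$ of $e_2=(0,1)$ (the antipodal clusters near $-e_1,-e_2$ are already folded into the real form \eqref{G}). I would define the four Cilleruelo coefficients as the cluster sums, $b_1:=\sum_{j=1}^{M}b_j$, $c_1:=\sum_{j=1}^{M}c_j$, $b_2:=\sum_{j=M+1}^{2M}b_j$, $c_2:=\sum_{j=M+1}^{2M}c_j$. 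These four sums involve disjoint families of the independent atoms of $G$, so they are independent, and by the unit-variance normalisation each has variance $1$. Hence the field $F$ built from them via \eqref{cilfie} is a genuine Cilleruelo field living on the same probability space as $G$, which is the required coupling.

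With this choice $D=F-G$ is a centred Gaussian field with the explicit representation
\[
D(x)=\frac{\sqrt2}{2}\sum_{j=1}^{2M}\big(b_j\,u_j(x)+c_j\,v_j(x)\big),\qquad u_j(x)=\cos\langle x,e_{\kappa(j)}\rangle-\cos\langle x,y_j\rangle,
\]
where $\kappa(j)\in\{1,2\}$ selects the relevant cluster centre and $v_j$ is defined analogously with sines. Since $|y_j-e_{\kappa(j)}|\le 2\sin(\epsilon/2)\le\epsilon$, the mean value theorem gives the amplitude bound $|u_j(x)|,|v_j(x)|\le|x|\,|y_j-e_{\kappa(j)}|\le R\epsilon$ on $B_R$, and a short computation gives the gradient bound $|\nabla u_j|,|\nabla v_j|=O(R\epsilon)$. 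Consequently the pointwise variance satisfies $\sigma^2(x):=\Var D(x)=O((R\epsilon)^2)$ and the canonical distance obeys $d(x,x')^2:=\mathbb{E}(D(x)-D(x'))^2\le C(R\epsilon)^2|x-x'|^2$, both uniformly in $x$ and --- crucially --- in $M$, because the factor $1/M$ coming from the coefficient variances cancels the number $\sim M$ of terms.

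The heart of the argument is the estimate $\mathbb{E}\,\|D\|_{C(B_R)}\le C R\epsilon\sqrt{\log R}$. The naive amplitude bound $\|D\|_{C(B_R)}\le\frac{\sqrt2}{2}R\epsilon\sum_j(|b_j|+|c_j|)$ only yields $O(R\epsilon\sqrt M)$ and is too lossy, so one must exploit the oscillation of $D$. I would feed the Lipschitz estimate $d(x,x')\le C R\epsilon|x-x'|$ into Dudley's entropy integral: the $\delta$-covering number of $B_R$ in the metric $d$ is $O\big((R^2\epsilon/\delta)^2\big)$, and integrating $\sqrt{\log N(\delta)}$ up to the $d$-diameter $O(R\epsilon)$ produces exactly the factor $\sqrt{\log R}$. (Equivalently one could use that $D$ is band-limited with frequencies of modulus $\le 1$ and pass from a unit-spaced net to the supremum by a Bernstein-type inequality.)

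Finally I would apply the Borell--TIS inequality to the functional $\|D\|_{C(B_R)}$, in the form $\P\big[\|D\|_{C(B_R)}\ge\mathbb{E}\,\|D\|_{C(B_R)}+t\big]\le 2\exp(-t^2/(2\sigma_{\max}^2))$ with $\sigma_{\max}^2\le C(R\epsilon)^2$. For the threshold $2\epsilon R\log R$, since $\mathbb{E}\,\|D\|_{C(B_R)}=O(R\epsilon\sqrt{\log R})=o(\epsilon R\log R)$ we may take $t\gtrsim\epsilon R\log R$, whence the exponent is $\gtrsim\log^2 R$; for the threshold $2\epsilon R^2$ the same inequality gives exponent $\gtrsim R^2$. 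In both cases the two occurrences of $\epsilon$ cancel, so the resulting constant $c$ is absolute, as required, for all $R\ge 2c$. The main obstacle is precisely the mean estimate of the third paragraph: one has to capture the cancellation among the $\sim M$ nearly aligned terms (through chaining/entropy, or the band-limited structure) rather than bounding amplitudes term by term, since the latter reintroduces a spurious $\sqrt M$ and destroys the uniformity in $M$ on which the whole statement rests.
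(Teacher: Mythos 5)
Your proposal is correct and follows essentially the same route as the paper: the identical coupling (defining the Cilleruelo coefficients as cluster sums of the atoms of $G$), the same pointwise bounds $|u_j|,|\nabla u_j|=O(R\epsilon)$ giving variance bounds for $F-G$ and its gradient that are uniform in $M$ thanks to the $1/M$ normalisation, and then a sup-norm concentration estimate. The only difference is that where the paper invokes Lemma \ref{l:Kolmogorov} (Lemma 3.12 of \cite{MuVa}) as a black box, you re-derive the needed bound from Dudley's entropy integral plus Borell--TIS, which is exactly the content of that lemma.
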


We would like to point out that this is very close to the best possible result since, as we will see, the difference between covariance kernels of $F$ and $G$ inside $B_R$ is of order $\epsilon R$.

Theorem \ref{discrete} will be proven in Section \ref{cou}. 

\begin{rem}
We state this theorem for coupling in the disc centred at the origin, but by stationarity, the same holds for any disc. Note, that the same sample Cilleruelo-type field will be coupled to different samples of the Cilleruelo field in different discs. See Figure \ref{pic5}.
\end{rem}

\begin{rem} This is a particular example of coupling of fields with close spectral measures. The general coupling result will appear elsewhere.
\end{rem}

\begin{rem}
Similar result with essentially the same proof holds for $\|F-G\|_{C^k(B_R)}$ for any integer $k$. With a bit more work, it can be shown that it is also true for non-integer $k$.
\end{rem}

\begin{rem}
Note that Theorem \ref{discrete} states that the coupled fields are close. In general, this does not mean that their nodal lines are close. If two functions are close, then in order for their nodal lines to be close, we also need the nodal lines to be `stable', that is, we need that the gradient can not be too small on the nodal line.  Figure \ref{pic5} gives an example of a `stable' and `unstable' couplings. 
This could be quantified and the probability of the `unstable' nodal lines can be estimated. Since this is not important for our considerations, we are not providing the details. We refer interested readers to  \cite[Lemmas 8 and 9]{BeWi} as well as to \cite{NaSo15,Sodin}. 
\end{rem}

\begin{figure}
	\begin{center}
		\includegraphics[width=0.45\textwidth]{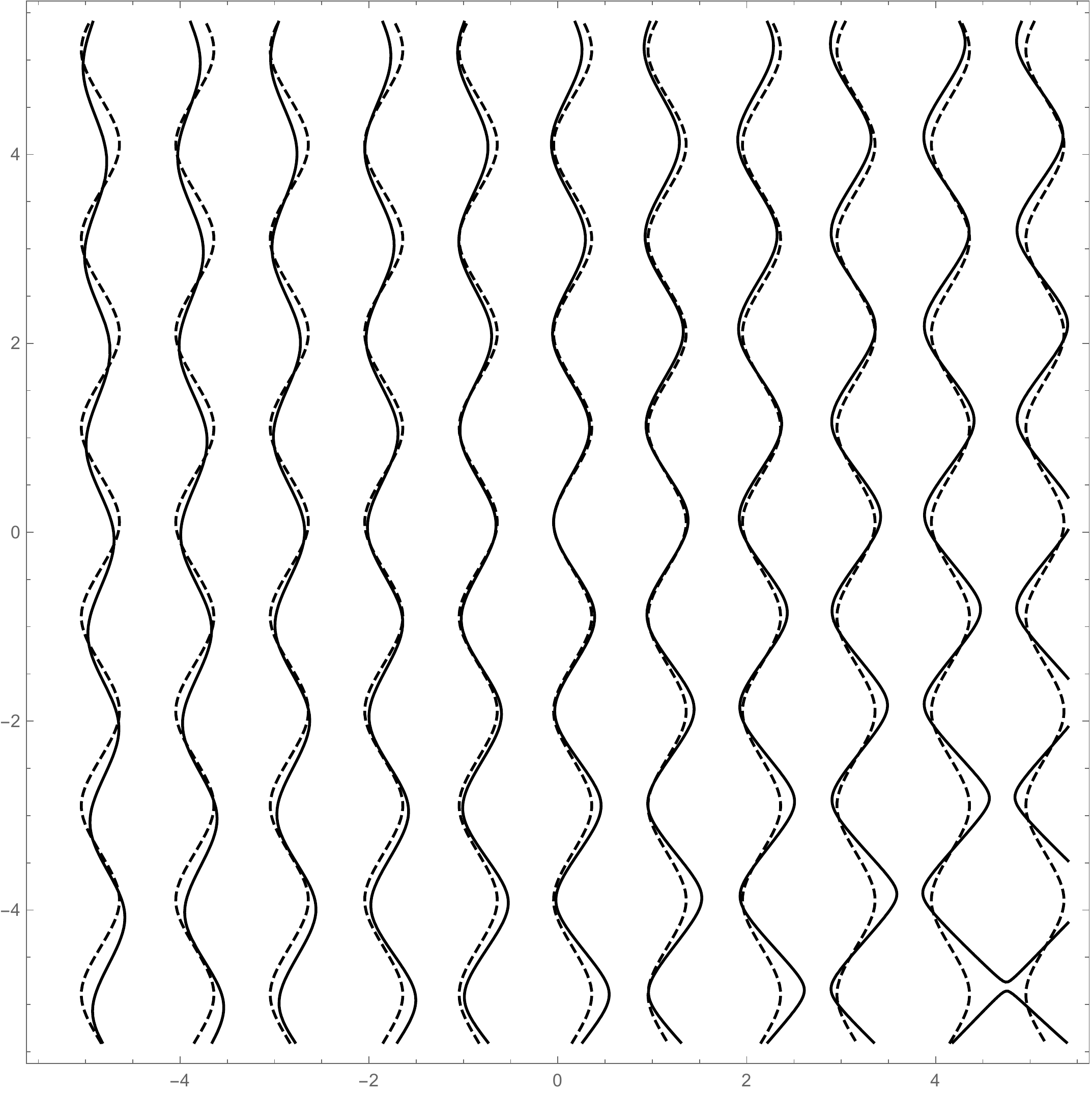} \
		\includegraphics[width=0.45\textwidth]{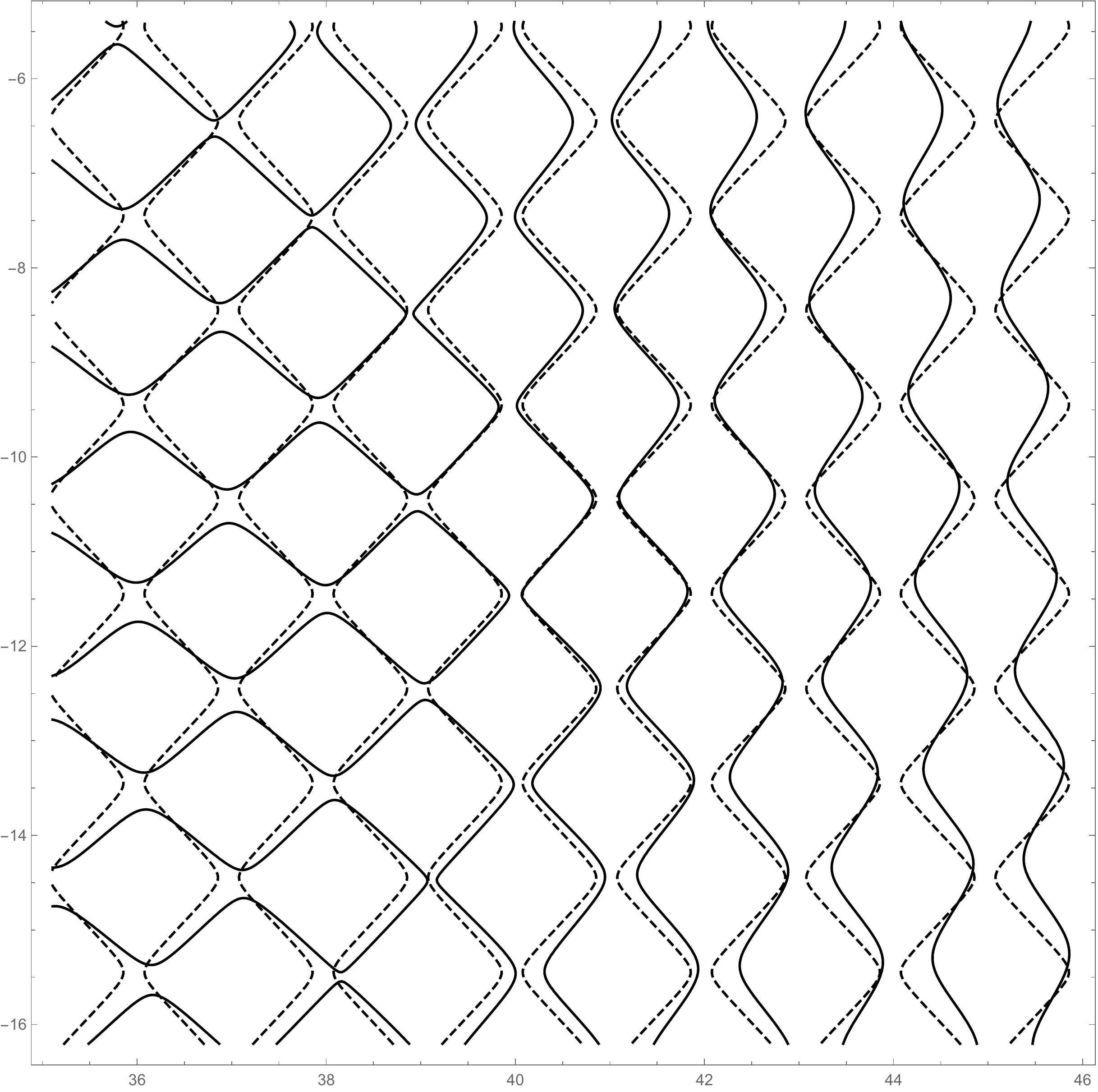}
	\end{center}
	\caption{Both pictures show the coupling of the same sample of a Cilleruelo-type function as in Figure \ref{pic3} but in two different boxes. Solid line shows the nodal lines of the Cilleruelo-type field and dashed are the nodal lines of the coupled Cilleruelo fields. The left is an example of a `stable' nodal line, the right is an example of an `unstable' one.}
	\label{pic5}
\end{figure}

For small $L\epsilon$ Theorem \ref{discrete} allows us to control the persistence probability of $g$ as in the following result.
\begin{prop}
	\label{perfg}
	Fix $0\leq u\leq\pi/4$, $L\geq 2\pi$, and $\epsilon>0$. Let $F$ be a Cilleruelo field and $G$ be of Cilleruelo type so that the spectral measures of $F,G$ are $\epsilon$-close. Denote $\mathcal{Z}_f,\mathcal{Z}_g$ the number of nodal intersections of respectively $F,G$ against the straight line $\mathcal{C}$ of direction $u$ and length $L=O(\epsilon^{\alpha-1})$ with some $\alpha>0$. Then we have 
\begin{equation}
	\label{perg1}
\begin{aligned}
\mathbb{P}(\mathcal{Z}_g=0)&=O(\epsilon^\alpha\log \epsilon)& u\ne 0
\\
\mathbb{P}(\mathcal{Z}_g=0)&
\ge 1-\frac{\sqrt{2}}{2}+O(\epsilon^\alpha\log \epsilon)& u= 0.
\end{aligned}
\end{equation}
\end{prop}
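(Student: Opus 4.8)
The plan is to transfer the persistence estimates for the Cilleruelo field $F$ (Proposition~\ref{percill}) to the Cilleruelo-type field $G$ through the coupling of Theorem~\ref{discrete}, paying only the sup-norm coupling error as a cost. Write $f(t)=F(\gamma(t))$ and $g(t)=G(\gamma(t))$ for $t\in[0,L]$, so that $\Z_f,\Z_g$ are the zero counts of $f,g$. Take $R=L$, so that $\mathcal{C}\subset\overline{B_R}$, and apply Theorem~\ref{discrete} to obtain a coupling for which the good event
\[
\mathcal{G}=\brb{\|F-G\|_{C(B_R)}\le\delta},\qquad \delta:=2\epsilon R\log R,
\]
has complementary probability at most $\exp(-\log^2R/c)$. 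Since $L=O(\epsilon^{\alpha-1})$ we have $\delta=O(\epsilon^\alpha|\log\epsilon|)$, while $\exp(-\log^2R/c)$ decays faster than any power of $\epsilon$ and is negligible against the target error. On $\mathcal{G}$ the signs of $f$ and $g$ agree wherever $|f|>\delta$: if $f(t)>\delta$ then $g(t)>0$, and if $f(t)<-\delta$ then $g(t)<0$. Hence if $|f|>\delta$ throughout $[0,L]$ then $f$ and $g$ are one-signed together, so $\Z_f=0\Rightarrow\Z_g=0$; conversely $\{\Z_g=0,\ \Z_f\ge1\}\cap\mathcal{G}$ forces every minority-sign excursion of $f$ to be shallower than $\delta$. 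This is the mechanism by which $\Z_g=0$ and $\Z_f=0$ differ only on ``thin-band'' events, whose probability I will show is $O(\delta)$.

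For the lower bound at $u=0$ I would use the explicit structure of $f$. Here $\cos u=1$, $\sin u=0$, so
\[
f(t)=\tfrac{\sqrt2}{2}\br{R_1\cos(t-\Theta_1)+b_2},\qquad R_1=\sqrt{b_1^2+c_1^2},
\]
with $b_2\sim\N(0,1)$ independent of $(R_1,\Theta_1)$. Since $L\ge2\pi$ covers a full period, $f$ ranges over $\tfrac{\sqrt2}{2}[\,b_2-R_1,\ b_2+R_1\,]$, so $\Z_f=0$ iff $|b_2|>R_1$, an event of probability $1-\tfrac{\sqrt2}{2}$ by part~(\ref{i}) of Proposition~\ref{percill}. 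Strengthening this to $|b_2|>R_1+\sqrt2\,\delta$ guarantees $|f|>\delta$ on $[0,L]$, hence $\Z_g=0$ on $\mathcal{G}$; the cost is only $\P(R_1<|b_2|\le R_1+\sqrt2\,\delta)=O(\delta)$, because $|b_2|-R_1$ (a convolution of a Rayleigh and a half-normal density) has a bounded density near the origin. Combining,
\[
\P(\Z_g=0)\ge\br{1-\tfrac{\sqrt2}{2}}-O(\delta)-\exp(-\log^2R/c)=1-\tfrac{\sqrt2}{2}+O(\epsilon^\alpha\log\epsilon).
\]

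For the upper bound at $u\ne0$ I would split
\[
\P(\Z_g=0)\le\P(\Z_f=0)+\P\br{\Z_g=0,\ \Z_f\ge1,\ \mathcal{G}}+\exp(-\log^2R/c).
\]
In the sharp regime $L\asymp\epsilon^{\alpha-1}\to\infty$ the first term is controlled by part~(\ref{iv}) of Proposition~\ref{percill}: it is at most $\exp(-C_4L^2\sin^2u)$, which is super-exponentially small and hence $\ll\epsilon^\alpha\log\epsilon$ (and it vanishes outright at $u=\pi/4$, where $f$ degenerates to a single sinusoid). For the middle term, the reduction in paragraph~1 shows it is bounded, by symmetry of $f\mapsto-f$, by $2\,\P(-\delta\le\min_{[0,L]}f<0)\le2\,\P(\min_{[0,L]}f\ge-\delta)$, i.e.\ the persistence of $f$ at the level $-\delta$, which again is super-small for large $L$. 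This yields $\P(\Z_g=0)=O(\epsilon^\alpha\log\epsilon)$.

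The main obstacle is the quantitative control of the thin-band events, namely that ``$f$ stays within $\delta$ of a single sign'' costs only $O(\delta)$ and, crucially, does \emph{not} accumulate a spurious factor from the $\asymp L$ excursions of $f$. For $u=0$ this reduces to the bounded-density statement for $(R_1,b_2)$ above and is immediate. For $u\ne0$ a naive Kac--Rice count of shallow local minima would produce the too-large factor $O(L\delta)$; the correct viewpoint is that such near-degenerate configurations are already subsumed by the persistence estimate of Proposition~\ref{percill}, so the discrepancy is governed by the far upper tail of $\min f$ rather than by its typical (deep, extreme-value) level. Keeping the coupling radius $R=L=O(\epsilon^{\alpha-1})$ throughout is exactly what makes the sup-norm error $\delta$ line up with the target order $\epsilon^\alpha\log\epsilon$, and tracking the admissible range of $L$ (so that $\P(\Z_f=0)$ is genuinely small for $u\ne0$) is the delicate bookkeeping point.
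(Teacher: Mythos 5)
Your overall architecture---couple $F$ and $G$ via Theorem \ref{discrete} on $B_R$ with $R=L$, set $\delta=2\epsilon R\log R=O(\epsilon^\alpha\log\epsilon)$, and transfer persistence across the coupling at an extra cost of $O(\delta)$ plus the negligible failure probability $\exp(-\log^2R/c)$---is the same as the paper's, and your $u=0$ branch (strengthen $|b_2|>R_1$ to $|b_2|>R_1+\sqrt2\,\delta$ and pay $O(\delta)$ because $|b_2|-R_1$ has bounded density at the origin) is essentially identical to the paper's proof.

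There is, however, a genuine gap in the $u\neq0$ branch, located exactly where you yourself flag ``the main obstacle''. You reduce the middle term to $2\,\P(-\delta\le\min_{[0,L]}f<0)\le 2\,\P(\min_{[0,L]}f\ge-\delta)$ and assert that this level-$(-\delta)$ persistence is ``super-small for large $L$'' because it is subsumed by the persistence estimates of Proposition \ref{percill}. That is not correct. Proposition \ref{percill}\eqref{iv} and Theorem \ref{sgc} control persistence above the level $0$ of a \emph{centred} process with a spectral gap; a barrier at level $-\delta$ amounts to adding the constant $\delta$ to the process, which places an atom at frequency zero and destroys the spectral gap, so neither result applies. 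And the quantity is genuinely not super-small: on the event $\{R_1\le\delta/2,\ R_2\le\delta/2\}$ (with $R_i=\sqrt{b_i^2+c_i^2}$), which has probability of order $\delta^4$, one has $\|f\|_\infty<\delta$ and hence $\min_{[0,L]}f\ge-\delta$, so $\P(\min_{[0,L]}f\ge-\delta)\ge c\,\delta^4$---only polynomially small in $\epsilon$. Your final bound $O(\epsilon^\alpha\log\epsilon)$ would survive any estimate of the form $O(\delta)$, but you have not established one, and your proposed route cannot. The paper closes this step by a structural stability argument rather than by bounding $\min f$: from the proof of Proposition \ref{percill}\eqref{vii}, $F\ge\tfrac{\sqrt2}{2}|R_1-R_2|$ on one $2\pi$-periodic family of lines crossed by $\mathcal{C}$ and $F\le-\tfrac{\sqrt2}{2}|R_1-R_2|$ on another, so on the coupling event $\{\Z_g=0\}$ forces $|R_1-R_2|\lesssim\delta$; since $R_1-R_2$ is the difference of two independent $\chi(2)$ variables, whose density is bounded at $0$, this event has probability $\sqrt{\pi}\,\epsilon L\log L+O\left((\epsilon L\log L)^2\right)$. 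You should either adopt this argument or supply an independent proof that $\P(-\delta\le\min_{[0,L]}f\le0)=O(\delta)$; note also that the crossing argument requires $L\sin u\ge2\pi$, which is where the interplay between $u$ and $L$ must be tracked.
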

The proof of Proposition \ref{perfg} may be found in Section \ref{cou}. Theorem \ref{discrete} and Proposition \ref{perfg} give indications on the closeness of the nodal lines of $F$ and $G$ when the spectral measures are close. At an appropriate scale, the nodal lines of a Cilleruelo type field `look like' those of a Cilleruelo field -- cfr. Figure \ref{pic5}.

The rest of this paper is organised as follows. In Section \ref{2fm}, we establish Propositions \ref{2fmprop} and \ref{finerprop}. %, and \ref{2sb}.
In Section \ref{per}, we cover relevant background on persistence probability and spectral gaps, and prove Propositions \ref{perlb}, \ref{perub}, \ref{persb}, and \ref{percill}. In Section \ref{cou}, we establish the coupling results Theorem \ref{discrete} and Proposition \ref{perfg}.

\subsection*{Acknowledgements}
The authors would like to thank Naomi Feldheim and Michael McAuley for helpful discussions. D.B. and R.M. were supported by the Engineering \& Physical Sciences Research Council (EPSRC) Fellowship EP/M002896/1 held by D.B.

\section{Second factorial moment asymptotic}
\label{2fm}
The aim of this section is to prove Propositions \ref{2fmprop} and \ref{finerprop}. % and \ref{2sb}.
We will need background on Kac-Rice formulas for processes, a standard tool for computing moments for the number of zeroes.
\subsection{Kac-Rice formulas}
The restriction of the rescaled arithmetic random wave $\Psi_m$ \eqref{rescaled_wave} to a straight line in the direction $u$ defines a centred Gaussian stationary process $\psi=:[0,L]\to\mathbb{R}$,
\begin{equation}
\label{psi}
\psi_u(t)=
\frac{1}{\sqrt{r_2(m)}}
\sum_{\lambda\in\text{supp}(\nu_m)}
a_{\lambda}
e^{2\pi it\langle\lambda,(\cos(u),\sin(u))\rangle}.
\end{equation}
 
Its covariance function is given by the expression
\begin{equation}
\label{rd}
\kappa(t)=\E{\psi(t_1)\psi(t_2)}
=
\frac{1}{r_2(m)}\sum_{\lambda\in\text{supp}(\nu_m)} e^{2\pi it\langle\lambda,(\cos(u),\sin(u))\rangle}
\end{equation}
where $t=t_1-t_2$ with slight abuse of notation. For a process $p$ satisfying certain conditions, moments of the number of zeroes
\begin{equation*}
\xi(p,T):=|\{t\in T : p(t)=0\}|
\end{equation*}
may be computed via {\bf Kac-Rice formulas} \cite{azawsc, cralea, adltay}. Let $p:I\to\mathbb{R}$ be a (a.s. $C^1$-smooth, say) Gaussian process on an interval $I\subseteq\mathbb{R}$. Define the first and second {\em intensities}, also called respectively \textbf{zero density} function $K_1: I\to\mathbb{R}$,
\begin{equation}
\label{K1gen}
K_1(t)=\phi_{p(t)}(0)\cdot\mathbb{E}[|p'(t)|\ \big| \ p(t)=0],
\end{equation}
and \textbf{2-point correlation} function $K_2: I\times I\to\mathbb{R}$,
\begin{equation}
\label{K2tgen}
K_2(t_1,t_2)=\phi_{p(t_1),p(t_2)}(0,0)\cdot\mathbb{E}[|p'(t_1)|\cdot|p'(t_2)|\ \big| \ p(t_1)=p(t_2)=0],
\end{equation}
for $t_1\neq t_2$. If $p$ is a stationary process, \eqref{K1gen} and \eqref{K2tgen} simplify to $K_1(t)\equiv K_1$ and
\begin{equation*}
%\label{K2tgens}
K_2(t)=\phi_{p(0),p(t)}(0,0)\cdot\mathbb{E}[|p'(0)|\cdot|p'(t)|\ \big| \  p(0)=p(t)=0]
\end{equation*}
respectively.
%It is known \cite[section 3.1]{rudwig} that $K_j$ admits a continuation to a smooth function on the whole of $I^j$.
\begin{thm}[{\cite[Theorem 3.2]{azawsc}}]
\label{krproc}
Let $p$ be a real-valued Gaussian process defined on an interval $I$ and having $C^1$ paths. Denote $\xi$ the number of zeros of $p$ on $I$. Then the expected number of zeroes is given by
\begin{equation}
\label{kr1}
\mathbb{E}[\xi]=\int_{I}K_1(t)dt.
\end{equation}
Assume further that for every $t_1\neq t_2$ the joint distribution of the random vector $(p(t_1),p(t_2))\in\mathbb{R}^2$ is non-degenerate. Then one has
\begin{equation}
\label{kr2}
\mathbb{E}[\xi(\xi-1)]=
\int_{I^2}K_2(t_1,t_2)dt_1dt_2.
\end{equation}
\end{thm}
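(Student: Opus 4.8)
The plan is to prove both identities by the standard regularisation (Kac--Rice) scheme: one writes the number of zeroes as a co-area integral of $|p'|$ against a Dirac mass on $\{p=0\}$, replaces the Dirac mass by mollifiers, and passes to the limit inside the expectation.

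First I would treat the first moment \eqref{kr1}. For $\varepsilon>0$ set $\delta_\varepsilon=(2\varepsilon)^{-1}\mathbf{1}_{[-\varepsilon,\varepsilon]}$ and define
\[
\xi_\varepsilon=\int_I \delta_\varepsilon(p(t))\,|p'(t)|\,dt.
\]
Because $p$ is $C^1$ and, by non-degeneracy of $p(t)$ together with a Bulinskaya-type argument, almost surely has only simple zeroes (i.e. $p(t)=0\Rightarrow p'(t)\neq 0$), a change of variables $s=p(t)$ on each maximal monotone piece shows $\xi_\varepsilon\to\xi$ almost surely as $\varepsilon\to0$. Taking expectations and applying Fubini gives
\[
\mathbb{E}[\xi_\varepsilon]=\int_I \frac{1}{2\varepsilon}\int_{-\varepsilon}^{\varepsilon}\mathbb{E}\big[|p'(t)|\,\big|\,p(t)=x\big]\,\phi_{p(t)}(x)\,dx\,dt.
\]
Since $(p(t),p'(t))$ is Gaussian and non-degenerate, the map $x\mapsto\mathbb{E}[|p'(t)|\mid p(t)=x]\,\phi_{p(t)}(x)$ is continuous, so the inner average converges to $K_1(t)$ as $\varepsilon\to0$. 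It then remains to justify interchanging the limit with the outer integration, which follows from a uniform-in-$\varepsilon$ domination of the integrand, available because the conditional mean of $|p'(t)|$ and the density $\phi_{p(t)}$ are locally bounded on the compact interval $I$.

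For the second factorial moment \eqref{kr2} I would run the same argument on $I^2$ with the diagonal removed. Using
\[
\xi(\xi-1)=\lim_{\varepsilon\to0}\int_{I^2}\delta_\varepsilon(p(t_1))\,\delta_\varepsilon(p(t_2))\,|p'(t_1)|\,|p'(t_2)|\,dt_1\,dt_2,
\]
taking expectations and applying Fubini, the inner double average converges, for each fixed $t_1\neq t_2$, to $K_2(t_1,t_2)$ precisely because the joint law of $(p(t_1),p(t_2))$ is assumed non-degenerate, so $\phi_{p(t_1),p(t_2)}$ is finite at $(0,0)$ and the relevant conditional expectation is well defined and continuous near $(0,0)$.

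The main obstacle is integrability near the diagonal. As $t_2\to t_1$ the pair $(p(t_1),p(t_2))$ degenerates and $\phi_{p(t_1),p(t_2)}(0,0)$ blows up; one must check that this singularity is integrable on $I^2$ and does not obstruct the passage to the limit. The standard remedy is a Taylor expansion showing $\det\mathrm{Cov}(p(t_1),p(t_2))\sim c\,(t_1-t_2)^2$ together with $\mathbb{E}[|p'(t_1)||p'(t_2)|\mid p(t_1)=p(t_2)=0]=O(|t_1-t_2|)$, so that $K_2(t_1,t_2)$ stays bounded (indeed vanishes on the diagonal) and the dominated-convergence hypothesis is met. This diagonal estimate, resting on the $C^1$ regularity and the Gaussian structure, is the delicate point; everything else is routine.
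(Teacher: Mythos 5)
First, note that the paper does not prove this statement at all: it is imported verbatim as Theorem 3.2 of Aza\"is--Wschebor, so there is no internal proof to compare against and your proposal has to stand on its own. On the first-moment part your scheme is the standard one, but the final interchange is not justified by what you invoke: bounding the integrand of $\mathbb{E}[\xi_\varepsilon]$ uniformly in $\varepsilon$ shows $\sup_\varepsilon\mathbb{E}[\xi_\varepsilon]<\infty$ and hence, with Fatou and $\xi_\varepsilon\to\xi$ a.s., only $\mathbb{E}[\xi]\le\int_I K_1$. To get equality you must interchange the almost-sure limit with the expectation, which needs uniform integrability of the random variables $\xi_\varepsilon$ themselves (or the usual detour through the continuity of $u\mapsto\mathbb{E}[N_u]$ at $u=0$, or a monotone discretisation counting sign changes on a grid). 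This is a fixable but genuine omission.

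The second part contains a step that actually fails. Your starting identity $\xi(\xi-1)=\lim_{\varepsilon\to0}\int_{I^2}\delta_\varepsilon(p(t_1))\delta_\varepsilon(p(t_2))|p'(t_1)||p'(t_2)|\,dt_1dt_2$ is false: the right-hand side is exactly $\lim_\varepsilon\xi_\varepsilon^2=\xi^2$, not $\xi(\xi-1)$. Consequently the dominated convergence you appeal to cannot hold, since it would yield $\mathbb{E}[\xi^2]=\int_{I^2}K_2=\mathbb{E}[\xi(\xi-1)]$, i.e.\ $\mathbb{E}[\xi]=0$. The mass $\mathbb{E}[\xi]$ escapes to the diagonal precisely where you claim the family is dominated. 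The missing idea is the diagonal excision: for fixed $\delta>0$ restrict the double integral to $\{|t_1-t_2|>\delta\}$, let $\varepsilon\to0$ to obtain the number of ordered pairs of distinct zeroes at mutual distance greater than $\delta$, then let $\delta\to0$ and use monotone convergence (zeroes of a $C^1$ path with only simple zeroes are isolated) to recover $\xi(\xi-1)$; on the other side $\int_{|t_1-t_2|>\delta}K_2\to\int_{I^2}K_2$ because the diagonal is Lebesgue-null. Finally, your proposed Taylor bound $K_2(t_1,t_2)=O(|t_1-t_2|)$ requires more regularity than the stated $C^1$ hypothesis and is in any case not needed: the theorem asserts an identity in $[0,+\infty]$, with both sides allowed to be infinite, so integrability of $K_2$ near the diagonal is not part of what has to be proved.
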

Letting $\Z=\Z(m,u)$ be the number of zeroes of the process $\psi$, Rudnick and Wigman \cite{rudwig} computed via \eqref{kr1}  $\mathbb{E}[\mathcal{Z}]=\sqrt{2}L$ \eqref{expectold}, independent of $u$. For the second factorial moment, the non-degeneracy assumption of Theorem \eqref{krproc} is unfortunately far from being satisfied for the process $\psi$ \cite{rudwig, maff2d}. However, on {\em small scales} (e.g. in the limit $L\to 0$) the non-degeneracy assumption does hold \cite[Lemma 4.3]{rudwig}. To establish the asymptotic of the second factorial moment, we will need a preliminary lemma. Recall the notation \eqref{lp} for the lattice point set $\Lambda=\Lambda_m=\sqrt{m}\,\text{supp}(\nu_m)$.
\begin{lemma}
\label{p}
For fixed $m$ and $\alpha=(\cos(u),\sin(u))\in\mathbb{R}^2$ one has
\begin{equation}
\label{p2}
\frac{1}{|\Lambda|}\sum_{\lambda\in\Lambda}\langle\lambda,\alpha\rangle^2=\frac{m}{2},
\end{equation}
\begin{equation}
\label{p4}
\frac{1}{|\Lambda|}\sum_{\lambda\in\Lambda}\langle\lambda,\alpha\rangle^4=\frac{m^2}{8}(3+\widehat{\nu}(4)\cos(4u)),
\end{equation}
and
\begin{equation}
\label{p6}
\frac{1}{|\Lambda|}\sum_{\lambda\in\Lambda}\langle\lambda,\alpha\rangle^6=\frac{m^3}{16}(5+3\widehat{\nu}(4)\cos(4u)).
\end{equation}
%\begin{equation}
%\label{p8}
%\frac{1}{|\Lambda|}\sum_{\Lambda}\langle\lambda,\alpha\rangle^8=\frac{m^4}{128}(35+28\hat{\nu}(4)\cos(4u)+\hat{\nu}(8)\cos(8u)).
%\end{equation}
\end{lemma}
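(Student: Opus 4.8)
The plan is to parametrise the lattice points by their angle and reduce each moment to a Fourier coefficient of the spectral measure $\nu_m$. Since every $\lambda\in\Lambda$ satisfies $|\lambda|=\sqrt m$, I would write $\lambda=\sqrt m\,(\cos\theta_\lambda,\sin\theta_\lambda)$, so that
\[
\langle\lambda,\alpha\rangle=\sqrt m\,(\cos\theta_\lambda\cos u+\sin\theta_\lambda\sin u)=\sqrt m\,\cos(\theta_\lambda-u),
\]
and hence $\langle\lambda,\alpha\rangle^{2k}=m^k\cos^{2k}(\theta_\lambda-u)$. Thus each of \eqref{p2}, \eqref{p4}, \eqref{p6} equals $m^k$ times the average of $\cos^{2k}(\theta_\lambda-u)$ over $\lambda\in\Lambda$, and the whole problem becomes a trigonometric average.

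Next I would linearise the even powers of cosine via the power-reduction formulas $\cos^2 x=\tfrac12(1+\cos 2x)$, $\cos^4 x=\tfrac18(3+4\cos2x+\cos4x)$, and $\cos^6 x=\tfrac1{32}(10+15\cos2x+6\cos4x+\cos6x)$, applied with $x=\theta_\lambda-u$. Each resulting term $\frac{1}{|\Lambda|}\sum_\lambda\cos(j(\theta_\lambda-u))$ is the real part of $e^{-iju}\,\frac{1}{|\Lambda|}\sum_\lambda e^{ij\theta_\lambda}$, and by the definition \eqref{nu} of $\nu_m$ the inner sum is a Fourier coefficient of $\nu_m$. So the computation reduces entirely to identifying $\widehat\nu(2),\widehat\nu(4),\widehat\nu(6)$.

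The key step is to exploit the symmetries of $\Lambda_m$. Because $\Lambda_m$ is invariant under rotation by $\pi/2$ (the map $(\lambda_1,\lambda_2)\mapsto(-\lambda_2,\lambda_1)$) and under the reflection $(\lambda_1,\lambda_2)\mapsto(\lambda_1,-\lambda_2)$, the measure $\nu_m$ is invariant under $\theta\mapsto\theta+\pi/2$ and under $\theta\mapsto-\theta$. The rotational invariance forces $\widehat\nu(j)=e^{\mp ij\pi/2}\widehat\nu(j)$, so $\widehat\nu(j)=0$ whenever $4\nmid j$; in particular $\widehat\nu(2)=\widehat\nu(6)=0$, which annihilates all the $\cos2x$ and $\cos6x$ contributions. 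The reflection invariance gives $\widehat\nu(-j)=\widehat\nu(j)$, so $\widehat\nu(4)$ is real and the single surviving harmonic is $\frac{1}{|\Lambda|}\sum_\lambda\cos(4(\theta_\lambda-u))=\mathrm{Re}\bigl(e^{-4iu}\widehat\nu(-4)\bigr)=\widehat\nu(4)\cos(4u)$.

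Substituting these facts into the linearised expressions, the averages of $\cos^{2k}(\theta_\lambda-u)$ become $\tfrac12$, $\tfrac18(3+\widehat\nu(4)\cos4u)$, and $\tfrac1{32}(10+6\widehat\nu(4)\cos4u)=\tfrac1{16}(5+3\widehat\nu(4)\cos4u)$; multiplying by $m^k$ yields exactly \eqref{p2}, \eqref{p4}, and \eqref{p6}. There is no serious obstacle in this argument: it is essentially bookkeeping with the power-reduction coefficients, and the only point requiring genuine care is the symmetry argument that kills $\widehat\nu(2)$ and $\widehat\nu(6)$, since that is precisely what removes the unwanted intermediate harmonics and leaves the clean dependence on $\cos(4u)$ alone.
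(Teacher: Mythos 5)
Your proof is correct, but it takes a genuinely different route from the paper's. The paper works in Cartesian coordinates: it cites \eqref{p2} from Rudnick--Wigman, and for \eqref{p4} and \eqref{p6} it expands $\langle\lambda,\alpha\rangle^{2k}$ multinomially, uses the symmetries of $\Lambda$ to cancel the odd cross terms, introduces the moment combinations $A=\frac{1}{m^2}\sum\lambda_1^4$, $B=\frac{1}{m^2}\sum\lambda_1^2\lambda_2^2$ (and $C$, $D$ for the sixth moment), and solves a small linear system --- using $\widehat\nu(4)=\frac{1}{|\Lambda|}(2A-6B)$ together with $2(A+B)=|\Lambda|$ --- to express everything in terms of $\widehat\nu(4)$, finishing with the identity $\alpha_1^4+\alpha_2^4-6\alpha_1^2\alpha_2^2=\cos(4u)$. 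You instead pass to polar coordinates, write $\langle\lambda,\alpha\rangle^{2k}=m^k\cos^{2k}(\theta_\lambda-u)$, linearise by power reduction, and kill every harmonic with $4\nmid j$ by the $\pi/2$-rotation invariance of $\Lambda_m$ (with the reflection symmetry making $\widehat\nu(4)$ real). Both arguments rest on exactly the same symmetries of $\Lambda_m$, and your coefficients check out: $\tfrac12$, $\tfrac18(3+\widehat\nu(4)\cos 4u)$ and $\tfrac1{32}(10+6\widehat\nu(4)\cos 4u)=\tfrac1{16}(5+3\widehat\nu(4)\cos 4u)$ reproduce \eqref{p2}--\eqref{p6}. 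What your version buys is uniformity and extensibility: all three identities (including \eqref{p2}, which the paper only cites) come from one formula, and the $2k$-th moment for any $k$ is immediately $m^k$ times a fixed binomial combination of $1,\widehat\nu(4)\cos 4u,\widehat\nu(8)\cos 8u,\dots$, whereas the paper's method needs a new linear system for each moment. What the paper's version buys is that it stays entirely inside integer arithmetic on the $\lambda_i$ and makes the cancellation of cross terms completely explicit. The only cosmetic slip in your write-up is writing $\widehat\nu(-4)$ where $\widehat\nu(4)$ is meant in the final real-part computation; since reflection symmetry gives $\widehat\nu(-4)=\widehat\nu(4)$, this is harmless.
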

\begin{proof}
The statement \eqref{p2} was proven in \cite[Lemma 2.2]{rudwi2}. To show \eqref{p4}, we begin by defining %\footnote{Here and in the rest of the proof, all summations are over $\lambda\in\Lambda$.}
\begin{equation*}
A:=\frac{1}{m^2}\sum_{\Lambda}\lambda_1^4=\frac{1}{m^2}\sum_{\Lambda}\lambda_2^4,
\qquad\qquad
B:=\frac{1}{m^2}\sum_{\Lambda}\lambda_1^2\lambda_2^2,
\qquad\qquad
\lambda=(\lambda_1,\lambda_2)
\end{equation*}
and writing
\begin{equation}
\label{p4st1}
\sum_{\Lambda}\langle\lambda,\alpha\rangle^4=m^2\left[(\alpha_1^4+\alpha_2^4)A+6\alpha_1^2\alpha_2^2B\right],
\end{equation}
where the remaining summands cancel out in pairs by the symmetries of the set $\Lambda$. Since
\begin{equation*}
\widehat{\nu}(4)=\frac{1}{|\Lambda|}\sum_{\Lambda}\left(\frac{\lambda_1+i\lambda_2}{\sqrt{m}}\right)^4=\frac{1}{|\Lambda|}(2A-6B),
\end{equation*}
and since clearly $2(A+B)=|\Lambda|$, one obtains
\begin{equation}
\label{p4st2}
A=\frac{|\Lambda|}{8}(3+\widehat{\nu}(4)),
\qquad\qquad
B=\frac{|\Lambda|}{8}(1-\widehat{\nu}(4)).
\end{equation}
Replacing \eqref{p4st2} into \eqref{p4st1} we get
\begin{equation}
\sum_{\Lambda}\langle\lambda,\alpha\rangle^4=\frac{m^2|\Lambda|}{8}\left[3+\widehat{\nu}(4)\cdot(\alpha_1^4+\alpha_2^4-6\alpha_1^2\alpha_2^2)\right],
\end{equation}
where we used $|\alpha|=1$. We now obtain \eqref{p4} by observing that 
\[
\alpha_1^4+\alpha_2^4-6\alpha_1^2\alpha_2^2=\cos(4u)
\]
via the usual trigonometric identities.

We now outline the proof of \eqref{p6}, that is very similar to that of \eqref{p4}. We write
\begin{equation}
\label{p6st1}
\sum_{\Lambda}\langle\lambda,\alpha\rangle^6=m^3\left[(\alpha_1^6+\alpha_2^6)C+15(\alpha_1^4\alpha_2^2+\alpha_2^4\alpha_1^2)D\right],
\end{equation}
where
\begin{equation*}
C:=\frac{1}{m^3}\sum_{\Lambda}\lambda_1^6,
\qquad\qquad
D:=\frac{1}{m^3}\sum_{\Lambda}\lambda_1^4\lambda_2^2.
\end{equation*}
We clearly have $C+D=A={|\Lambda|}(3+\widehat{\nu}(4))/8$ and $2(C+3D)=|\Lambda|$, whence
\begin{equation}
\label{p6st2}
C=\frac{|\Lambda|}{16}(5+3\widehat{\nu}(4)),
\qquad\qquad
D=\frac{|\Lambda|}{16}(1-\widehat{\nu}(4)).
\end{equation}
Substituting \eqref{p6st2} into \eqref{p6st1} yields
\begin{equation}
\label{p6st3}
\sum_{\Lambda}\langle\lambda,\alpha\rangle^6=\frac{m^3|\Lambda|}{16}\left[5(\alpha_1^6+\alpha_2^6+3\alpha_1^2\alpha_2^2)+3\widehat{\nu}(4)\cdot(\alpha_1^6+\alpha_2^6-5\alpha_1^2\alpha_2^2)\right],
\end{equation}
where we also noted that $\alpha_1^4\alpha_2^2+\alpha_2^4\alpha_1^2=\alpha_1^2\alpha_2^2$ since $|\alpha|=1$. One has $\alpha_1^6+\alpha_2^6+3\alpha_1^2\alpha_2^2=1$ and $\alpha_1^6+\alpha_2^6-5\alpha_1^2\alpha_2^2=\cos(4u)$. We replace these into \eqref{p6st3} to establish \eqref{p6}.
\end{proof}

\subsection{The proofs of Propositions \ref{2fmprop} and \ref{finerprop}}
We now formulate the asymptotic of the two-point correlation function \eqref{K2tgen} of $\psi$ \eqref{psi} on small scales.
\begin{prop}
\label{K2asyprop}
As $L\to 0$, the two-point correlation function has the form
\begin{equation}
\label{K2asy}
K_{2;m}(t_1,t_2)=\frac{\sqrt{2}\pi^2}{8}|t_1-t_2|\cdot(1+\widehat{\nu}(4)\cos(4u))+O(|t_2-t_1|^3).
\end{equation}
\end{prop}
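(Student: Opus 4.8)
The plan is to exploit the stationarity of $\psi$ and reduce everything to the Taylor expansion of its covariance $\kappa$ at the origin, which Lemma \ref{p} supplies. Writing $\alpha=(\cos u,\sin u)$ and abbreviating $c:=\widehat\nu(4)\cos(4u)$, I first note that the sum in \eqref{rd} is real (the odd parts cancel under $\lambda\mapsto-\lambda$), so $\kappa(t)=r_2(m)^{-1}\sum_\lambda\cos(2\pi t\mu_\lambda)$ with $\mu_\lambda=\langle\lambda,\alpha\rangle$ for $\lambda\in\text{supp}(\nu_m)$. Differentiating gives $\kappa^{(2k)}(0)=(-1)^k(2\pi)^{2k}r_2(m)^{-1}\sum_\lambda\mu_\lambda^{2k}$, and since a unit-circle point equals $m^{-1/2}$ times a lattice point, $r_2(m)^{-1}\sum_\lambda\mu_\lambda^{2k}=m^{-k}|\Lambda|^{-1}\sum_{\lambda\in\Lambda}\langle\lambda,\alpha\rangle^{2k}$. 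Inserting \eqref{p2}, \eqref{p4}, \eqref{p6} yields $\kappa''(0)=-2\pi^2$, $\kappa^{(4)}(0)=2\pi^4(3+c)$, $\kappa^{(6)}(0)=-4\pi^6(5+3c)$, hence
\[
\kappa(t)=1-\pi^2t^2+\tfrac{\pi^4(3+c)}{12}t^4-\tfrac{\pi^6(5+3c)}{180}t^6+O(t^8).
\]
Three moments suffice because the claimed remainder is $O(t^3)$.

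Next I set up the Kac--Rice kernel \eqref{K2tgen} by Gaussian regression; nondegeneracy of the value pair for small $t\neq0$ is guaranteed on small scales (\cite[Lemma 4.3]{rudwig}). By stationarity put $t_1=t$, $t_2=0$. The vector $(\psi(t),\psi(0),\psi'(t),\psi'(0))$ has value block $A=\left(\begin{smallmatrix}1&\kappa\\ \kappa&1\end{smallmatrix}\right)$, derivative block $C=\left(\begin{smallmatrix}-\kappa''(0)&-\kappa''\\ -\kappa''&-\kappa''(0)\end{smallmatrix}\right)$ and cross block $B=\left(\begin{smallmatrix}0&-\kappa'\\ \kappa'&0\end{smallmatrix}\right)$, with $\kappa=\kappa(t)$ etc. Conditioning on $\psi(t)=\psi(0)=0$ produces the derivative covariance $\Omega=C-B^\top A^{-1}B$, whose diagonal is $\sigma^2=-\kappa''(0)-\kappa'^2/(1-\kappa^2)$ and off-diagonal $\rho\sigma^2=-\kappa''-\kappa\kappa'^2/(1-\kappa^2)$. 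Combining the bivariate density $1/(2\pi\sqrt{1-\kappa^2})$ with the identity $\E{|X||Y|}=\tfrac{2\sigma^2}{\pi}(\sqrt{1-\rho^2}+\rho\arcsin\rho)$ for a centred normal with equal variances $\sigma^2$ and correlation $\rho$, I obtain
\[
K_{2;m}(t)=\frac{\sigma^2}{\pi^2\sqrt{1-\kappa^2}}\,g(\rho),\qquad g(\rho):=\sqrt{1-\rho^2}+\rho\arcsin\rho.
\]

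The crux is the degenerate limit $t\to0$, where $A$ becomes singular and $\rho\to-1$. Feeding the expansion of $\kappa$ into the two ratios, I would verify $\kappa'^2/(1-\kappa^2)=2\pi^2-\tfrac{\pi^4(1+c)}{2}t^2+O(t^4)$ and $\kappa\kappa'^2/(1-\kappa^2)=2\pi^2-\tfrac{\pi^4(5+c)}{2}t^2+O(t^4)$; since $-\kappa''(0)=2\pi^2$ and $-\kappa''=2\pi^2-\pi^4(3+c)t^2+O(t^4)$, the leading $2\pi^2$ cancel and
\[
\sigma^2=\tfrac{\pi^4(1+c)}{2}t^2+O(t^4),\qquad \rho\sigma^2=-\tfrac{\pi^4(1+c)}{2}t^2+O(t^4),
\]
so $\rho\to-1$. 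I treat $g$ near this singular value using $g'(\rho)=\arcsin\rho$ and $g(-1)=\pi/2$: with $s:=1+\rho$ one gets $g(\rho)=\tfrac{\pi}{2}-\tfrac{\pi}{2}s+\tfrac{2\sqrt2}{3}s^{3/2}+\cdots$. Together with $\sqrt{1-\kappa^2}=\sqrt2\,\pi|t|(1+O(t^2))$ this gives the leading coefficient
\[
K_{2;m}(t)=\frac{1}{\pi^2}\cdot\frac{\tfrac{\pi^4(1+c)}{2}t^2}{\sqrt2\,\pi|t|}\cdot\frac{\pi}{2}+\cdots=\frac{\sqrt2\,\pi^2}{8}(1+c)\,|t|+\cdots,
\]
which is exactly \eqref{K2asy}. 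For the remainder I argue by parity: $\sigma^2$, $(1-\kappa^2)/t^2$ and $s=1+\rho$ are smooth even functions of $t$ (series in $t^2$), so $\sigma^2/\sqrt{1-\kappa^2}=|t|\,P(t^2)$ with $P$ analytic, while $g(\rho)=\tfrac{\pi}{2}+O(t^2)+\tfrac{2\sqrt2}{3}s^{3/2}$ with $s^{3/2}=O(|t|^3)$. Multiplying, every term of $K_{2;m}$ is an odd power of $|t|$, so the first correction after $\tfrac{\sqrt2\,\pi^2}{8}(1+c)|t|$ is $O(|t|^3)$.

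The principal obstacle is this joint degeneration: both the conditional variance and covariance vanish like $t^2$ and the correlation runs into the square-root singularity of $g$ at $\rho=-1$, so the leading term surfaces only after the exact cancellation of the $2\pi^2$ contributions in $\sigma^2$ and $\rho\sigma^2$. This forces me to carry the expansion of $\kappa$ (hence Lemma \ref{p}) to the $t^4$-level inside the ratios and to handle the non-analytic behaviour of $g$ carefully. Securing the sharp $O(|t|^3)$ remainder rather than $O(t^2)$ then rests on the evenness of all the smooth factors, which confines the expansion to odd powers of $|t|$.
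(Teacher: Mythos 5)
Your proof is correct and follows essentially the same route as the paper: expand $\kappa$ at the origin via Lemma \ref{p}, insert into the explicit two-point kernel $K_2=\frac{\sigma^2}{\pi^2\sqrt{1-\kappa^2}}(\sqrt{1-\rho^2}+\rho\arcsin\rho)$, and resolve the degeneration as $t\to 0$; the only differences are that you rederive that kernel by Gaussian regression where the paper cites \cite[Lemma 3.1]{rudwig}, and your $\rho\to-1$ versus the paper's $\rho\to+1$ is just an opposite sign convention, immaterial since $\sqrt{1-\rho^2}+\rho\arcsin\rho$ is even. Your parity argument for the $O(|t|^3)$ remainder is in fact more explicit than the paper's.
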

The proof of Proposition \ref{K2asyprop} is deferred to the end of this section.

\begin{proof}[Proof of Proposition \ref{2fmprop} assuming Proposition \ref{K2asyprop}]
By \cite[Lemma 4.3]{rudwig}, the Kac-Rice formula \eqref{kr2} holds for $L\to 0$. To establish \eqref{2fmasy}, we replace the asymptotic \eqref{K2asy} into \eqref{kr2} and compute the integrals
\begin{equation*}
\iint_{[0,L]^2}|t_1-t_2|dt_1dt_2=\frac{L^3}{3}
\quad
\text{and}
\quad
\iint_{[0,L]^2}|t_1-t_2|^3dt_1dt_2=\frac{L^5}{10}.
\end{equation*}
\end{proof}

We have the following bounds for the leading coefficient in \eqref{K2asy}:
\begin{equation*}
0\leq\frac{\sqrt{2}\pi^2}{8}\cdot(1+\widehat{\nu}(4)\cos(4u))\leq\frac{\sqrt{2}\pi^2}{4}.
\end{equation*}
If $\widehat{\nu}(4)>0$, the maximum is reached when $u$ is a multiple of $\pi/2$ and the minimum is reached when $u$ is an odd multiple of $\pi/4$; whereas if $\widehat{\nu}(4)<0$, the opposite is true. If $\widehat{\nu}(4)=0$, that is to say, if the lattice points $\Lambda$ equidistribute in the limit, then the leading coefficient does not depend on the angle $u$.

On the other hand, if $u=\pi/8$ or an odd multiple, then $\cos(4u)=0$ hence \eqref{K2asy} is independent of the distribution of the lattice points on $\sqrt{m}\mathbb{S}^1$. This is tantamount to
\begin{equation*}
\frac{1}{|\Lambda|}\sum_{\Lambda}\langle\lambda,\alpha\rangle^4=\frac{3m^2}{8}
\end{equation*} 
i.e. the vanishing of 
\begin{equation*}
\alpha_1^4+\alpha_2^4-6\alpha_1^2\alpha_2^2.
\end{equation*}
The leading coefficient of \eqref{K2asy} may vanish in the two cases: 
$\widehat{\nu}(4)=1$ (Cilleruelo measure) combined with $\cos(4u)=-1$ (diagonal line segments), and $\widehat{\nu}(4)=-1$ (tilted Cilleruelo measure) combined with $\cos(4u)=1$ (horizontal and vertical line segments). In these cases, the two-point correlation function obeys the following smaller order asymptotic.

\begin{prop}
\label{finerK2}
Assume $L\to 0$ and $\widehat{\nu}(4)\cos(4u)=-1$. Then the two-point correlation function has the form
\begin{equation}
\label{finer}
K_{2;m}(t_1,t_2)
%\\=-\frac{\sqrt{2}\pi^4}{90}|t_1-t_2|^3\cdot\left[25+32\hat{\nu}(4)\cos(4u)+5\hat{\nu}(4)^2\cos(4u)^2\right]
=\frac{\sqrt{2}\pi^4}{45}m^{5/2}|t_1-t_2|^3
+O(|t_2-t_1|^5).
\end{equation}
%so that the second factorial moment has the asymptotic
%\begin{equation}
%\label{finer2}
%\E{\Z(\Z-1)}
%\\=-(\sqrt{m}L)^5\cdot\frac{\sqrt{2}\pi^4}{900}\left[25+32\hat{\nu}(4)\cos(4u)+5\hat{\nu}(4)^2\cos(4u)^2\right]\\
%=(\sqrt{m}L)^5\cdot\frac{\sqrt{2}\pi^4}{450}
%+O(\sqrt{m}L)^7.
%\end{equation}
\end{prop}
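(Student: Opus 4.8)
The plan is to run the stationary Kac--Rice machinery of Theorem \ref{krproc} on the process $\psi$ with covariance $\kappa$ from \eqref{rd}, expand every ingredient in powers of $\tau=t_1-t_2$ using the moment identities of Lemma \ref{p}, and exploit the fact that the hypothesis $\widehat\nu(4)\cos(4u)=-1$ annihilates the leading coefficient, so that the next term (carried by the sixth moment \eqref{p6}) produces the $|\tau|^3$ asymptotic. First I would record the closed form of the Kac--Rice integrand. Conditioning $(\psi'(t_1),\psi'(t_2))$ on $\psi(t_1)=\psi(t_2)=0$, Gaussian regression and stationarity give a centred conditional law with covariance $\Omega$,
\[
\Omega_{11}=\Omega_{22}=-\kappa''(0)-\frac{\kappa'(\tau)^2}{1-\kappa(\tau)^2},\qquad \Omega_{12}=-\kappa''(\tau)-\frac{\kappa(\tau)\,\kappa'(\tau)^2}{1-\kappa(\tau)^2}.
\]
Using the elementary identity $\E{|W_1||W_2|}=\frac{2}{\pi}\big(\sqrt{\det\Omega}+\Omega_{12}\arcsin(\Omega_{12}/\Omega_{11})\big)$ for a centred bivariate normal with equal variances, together with $\phi_{\psi(t_1),\psi(t_2)}(0,0)=\frac{1}{2\pi\sqrt{1-\kappa(\tau)^2}}$, the definition \eqref{K2tgen} becomes
\[
K_{2;m}(t_1,t_2)=\frac{1}{\pi^2\sqrt{1-\kappa(\tau)^2}}\Big(\sqrt{\det\Omega}+\Omega_{12}\arcsin\frac{\Omega_{12}}{\Omega_{11}}\Big).
\]

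Next I would Taylor expand. From \eqref{rd}, $\kappa(t)=\sum_{k\ge0}\frac{(-1)^k(2\pi)^{2k}}{(2k)!}\,\mu_{2k}\,t^{2k}$ with $\mu_{2k}=\frac1{|\Lambda|}\sum_{\lambda}\langle\lambda,\alpha\rangle^{2k}$, and \eqref{p2}, \eqref{p4}, \eqref{p6} supply $\mu_2,\mu_4,\mu_6$ explicitly. Substituting into $\Omega_{11},\Omega_{12}$ and $1-\kappa^2$, one sees that as $\tau\to0$ the conditional variance $\Omega_{11}\to0$ while the conditional correlation $\rho:=\Omega_{12}/\Omega_{11}\to-1$, so the $\arcsin$ must be expanded around $-1$, where it is only $C^1$. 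A convenient bookkeeping is to expand the symmetric combinations separately: $\Omega_{11}-\Omega_{12}$ starts at order $\tau^2$ with coefficient proportional to $\mu_4-\mu_2^2$, while $\Omega_{11}+\Omega_{12}$ starts only at order $\tau^4$ with coefficient a multiple of $\mu_2\mu_6-\mu_4^2$. Feeding these into $\det\Omega=(\Omega_{11}-\Omega_{12})(\Omega_{11}+\Omega_{12})$ and into the $\arcsin$ term, and dividing by $\sqrt{1-\kappa^2}\sim 2\pi\sqrt{\mu_2}\,|\tau|$, yields an expansion $K_{2;m}=c_1|\tau|+c_3|\tau|^3+O(|\tau|^5)$ in which $c_1$ is a multiple of $\mu_4-\mu_2^2$.

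Now I would impose the hypothesis. By \eqref{p4}, $\mu_4-\mu_2^2$ is proportional to $1+\widehat\nu(4)\cos(4u)$, which vanishes exactly when $\widehat\nu(4)\cos(4u)=-1$; hence $c_1=0$ and the leading behaviour drops to order $|\tau|^3$. At this point I would substitute the degenerate values of $\mu_4,\mu_6$ from \eqref{p4}--\eqref{p6}, collect the surviving contributions to $c_3$ from both $\sqrt{\det\Omega}$ and the $\arcsin$ piece, and verify that they assemble into the constant displayed in \eqref{finer}. Proposition \ref{finerprop} then follows exactly as Proposition \ref{2fmprop} did, by inserting \eqref{finer} into \eqref{kr2} and using $\iint_{[0,L]^2}|t_1-t_2|^3\,dt_1dt_2=L^5/10$, once the Kac--Rice formula is justified on small scales via the non-degeneracy result \cite[Lemma 4.3]{rudwig}.

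The step I expect to be the main obstacle is controlling the cascade of cancellations. The degeneracy condition kills not only the $\tau^2$ coefficient of $\Omega_{11}-\Omega_{12}$ but, through $\mu_2\mu_6-\mu_4^2\propto 1-(\widehat\nu(4)\cos(4u))^2$, also the leading part of $\Omega_{11}+\Omega_{12}$; indeed, at the exact limiting four--point measure $\psi$ degenerates to a single harmonic $A\cos(\omega t)+B\sin(\omega t)$, whose two--point Kac--Rice density vanishes identically. The nonvanishing $|\tau|^3$ term must therefore be extracted from the genuine finite-$m$ moments, whose deviations from the four-point values are precisely what \eqref{p4}--\eqref{p6} quantify, and one must be careful that the $C^1$-but-not-$C^2$ behaviour of $\arcsin\rho$ at $\rho=-1$ (its half-integer power correction) feeds only into the $O(|\tau|^5)$ error and not into the $|\tau|^3$ coefficient. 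Disentangling these competing higher-order effects and showing that they combine into one clean constant, rather than into an uncontrolled or vanishing term, is the delicate heart of the computation.
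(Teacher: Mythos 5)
Your strategy is the same as the paper's: write the two-point intensity in the closed form $K_2=\frac{M(1-\kappa^2)-\kappa'^2}{\pi^2(1-\kappa^2)^{3/2}}\left(\sqrt{1-\rho^2}+\rho\arcsin\rho\right)$ (your $\Omega$-formula is exactly this), expand $\kappa,\kappa',\kappa''$ via Lemma \ref{p}, and let the hypothesis kill the $|\tau|$ term, $\tau=t_1-t_2$. The gap is in the step you defer to the end (``verify that they assemble into the constant displayed in \eqref{finer}''), and your own remark about the cascade of cancellations already contains the reason it fails. Lemma \ref{p} pins the projected moments exactly: $\mu_2=1/2$, $\mu_4=(3+\widehat{\nu}(4)\cos 4u)/8$, $\mu_6=(5+3\widehat{\nu}(4)\cos 4u)/16$. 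Under the hypothesis these equal $1/2,\ 1/4,\ 1/8$, i.e.\ precisely the moments of $\frac12(\delta_{1/\sqrt2}+\delta_{-1/\sqrt2})$, so $\kappa(\tau)-\cos(\sqrt2\pi\tau)=O(\tau^8)$. For $\tilde\kappa=\cos(\sqrt2\pi\tau)$ the numerator $M(1-\tilde\kappa^2)-\tilde\kappa'^2$ vanishes identically; subtracting, $M(1-\kappa^2)-\kappa'^2=O(\tau^8)$, while $(1-\kappa^2)^{3/2}\asymp|\tau|^3$ and the $\arcsin$ factor lies in $[1,\pi/2]$. Hence $K_2=O(|\tau|^5)$: the $|\tau|^3$ coefficient is forced to be \emph{zero}, not $\sqrt2\pi^4/45$. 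Direct expansion confirms this: the $|\tau|^3$ coefficient equals $\frac{\sqrt2\pi^4}{576}(1+X)(5X+37)$ with $X=\widehat{\nu}(4)\cos(4u)$, which vanishes at $X=-1$, whereas the paper's bracket $25+32X+5X^2$ is not divisible by $(1+X)$ and appears to contain an algebra slip; so the difficulty is shared by the paper's own proof, not created by your reformulation.

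Consequently your final claim --- that ``the nonvanishing $|\tau|^3$ term must be extracted from the genuine finite-$m$ moments, whose deviations from the four-point values are precisely what \eqref{p4}--\eqref{p6} quantify'' --- is self-defeating: under the stated hypothesis \eqref{p4}--\eqref{p6} show there is \emph{no} deviation at orders $4$ and $6$, and the first moment that distinguishes the finite-$m$ process from the pure harmonic is $\mu_8$, which involves $\widehat{\nu}(8)$, is not controlled by Lemma \ref{p}, and contributes only at order $|\tau|^5$. (Your worry about the $C^1$-but-not-$C^2$ behaviour of $\arcsin$ is not the issue: that half-integer correction enters at order $\tau^4$ with coefficient proportional to $(1+X)(1-X)^{3/2}$, so it too dies under the hypothesis.) If one instead reads $\widehat{\nu}(4)$ as a coefficient of the limiting measure, then $X_m=-1+\epsilon_m$ and every coefficient through order $|\tau|^3$ is $O(\epsilon_m)$, which again cannot produce a fixed positive constant. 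Carried out honestly, your computation would not prove \eqref{finer}; it would show the leading term is of order $|\tau|^5$ and governed by $\widehat{\nu}(8)$.
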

Before proving Proposition \ref{finerK2}, we complete the proof of Proposition \ref{finerprop}.
\begin{proof}[Proof of Proposition \ref{finerprop} assuming Proposition \ref{finerK2}]
By \cite[Lemma 4.3]{rudwig}, the Kac-Rice formula \eqref{kr2} holds for $L\to 0$. To show \eqref{finer2}, we insert \eqref{finer} into \eqref{kr2} and compute
\begin{equation*}
\iint_{[0,L]^2}|t_1-t_2|^3dt_1dt_2=\frac{L^5}{10}
\quad
\text{and}
\quad
\iint_{[0,L]^2}|t_1-t_2|^5dt_1dt_2=\frac{L^7}{21}.
\end{equation*}
\end{proof}
To finish this section we prove Propositions \ref{K2asyprop} and \ref{finerK2}.
\begin{proof}[Proof of Proposition \ref{K2asyprop}]
The covariance function $\kappa$ \eqref{rd} satisfies $\kappa''(0)=-2\pi^2=:-M$. We suppress the dependency on $t_1,t_2,t=t_1-t_2$ to simplify notation. The two-point function admits the explicit expression \cite[Lemma 3.1]{rudwig}
\begin{equation}
\label{K2expl}
K_2=\frac{M(1-\kappa^2)-\kappa'^2}{\pi^2(1-\kappa^2)^{3/2}}(\sqrt{1-\rho^2}+\rho\arcsin\rho),
\end{equation}
where $\rho$ is the correlation between $\psi'(t_1)$ and $\psi'(t_2)$, and is given by
\begin{equation}
\label{rho}
\rho=\frac{\kappa''(1-\kappa^2)+\kappa\kappa'^2}{M(1-\kappa^2)-\kappa'^2}.
\end{equation}

Expanding exponent into power series and using  Lemma \ref{p} we obtain the following expansions as $t\to 0$
\begin{align*}
\kappa&\sim 1-\frac{Mt^2}{2}+\frac{M^2t^4}{6}\cdot\frac{1}{8}(3+\widehat{\nu}(4)\cos(4u))+O(t^6),
\\
\kappa'&\sim -Mt+\frac{M^2t^3}{12}(3+\widehat{\nu}(4)\cos(4u))+O(t^5),
\\
\kappa''&\sim -M+\frac{M^2t^2}{4}(3+\widehat{\nu}(4)\cos(4u))+O(t^4).
\end{align*}
%As expected, this implies $\rho\sim 1$ and
Using these expansions we get
\begin{equation}
\label{ob1}
\frac{M(1-\kappa^2)-\kappa'^2}{\pi^2(1-\kappa^2)^{3/2}}
=
\frac{1}{8\pi^2}M^{3/2}t\cdot(1+\widehat{\nu}(4)\cos(4u))+O(t^3)
\end{equation}
and
\[
\rho=1+O(t^2).
\]
Combining these formulas we have
\[
K_2=\frac{1}{16\pi}M^{3/2}t\cdot(1+\widehat{\nu}(4)\cos(4u))+O(t^3).
\]
Recalling that $M=2\pi^2$ we complete the proof of the lemma.

\end{proof}

\begin{proof}[Proof of Proposition \ref{finerK2}]
We proceed as in the proof of Proposition \ref{K2asyprop}, computing the Taylor expansions to one order higher, e.g.,
\begin{align*}
\kappa&\sim 1-\frac{Mt^2}{2}+\frac{M^2t^4}{6}\cdot\frac{1}{8}(3+\widehat{\nu}(4)\cos(4u))-\frac{M^3t^6}{90}\cdot\frac{1}{16}(5+3\widehat{\nu}(4)\cos(4u)),
\\
\kappa'&\sim -Mt+\frac{M^2t^3}{12}(3+\widehat{\nu}(4)\cos(4u))-\frac{M^3t^5}{240}(5+3\widehat{\nu}(4)\cos(4u)),
\\
\kappa''&\sim -M+\frac{M^2t^2}{4}(3+\widehat{\nu}(4)\cos(4u))-\frac{M^3t^4}{48}(5+3\widehat{\nu}(4)\cos(4u))
\end{align*}
(via Lemma \ref{p}). As before, we plug this into the formula for $K_2$ and using that $\widehat{\nu}(4)\cos(4u)=-1$ we have
\[
K_{2;m}
=-\frac{\sqrt{2}\pi^4}{90}t^3\cdot\left[25+32\widehat{\nu}(4)\cos(4u)+5\widehat{\nu}(4)^2\cos(4u)^2\right]
\\+O(t^5 ).
\]
\end{proof}

\section{Persistence probability}
\label{per}
\subsection{Random toral waves}
The aim of this section is to prove Propositions \ref{perlb} and \ref{perub}. Recall that the restriction of a toral wave \eqref{arwd} to the straight line \eqref{C2} defines the process \eqref{psi} $\psi_u:[0,L]\to\mathbb{R}$,
\begin{equation*}
\psi_u(t)=
\frac{1}{\sqrt{r_2(m)}}
\sum_{\lambda\in\text{supp}(\nu_m)}
a_{\lambda}
e^{2\pi it\langle\lambda,(\cos(u),\sin(u))\rangle}.
\end{equation*}
The spectral measure of this process is the projection of the original spectral measure
\begin{equation*}
\rho_{u}=\frac{1}{r_2(m)}\sum_{\lambda\in\text{supp}(\nu_m)}\delta_{\langle\lambda,(\cos(u),\sin(u)\rangle}.
\end{equation*}

%\subsection{The arithmetic case}
%We may assume $0\leq u\leq\pi/4$.
%In the special case of the Cilleruelo measure, inequality \eqref{noi} becomes the equality \eqref{noiu0}.
\begin{proof}[Proof of Proposition \ref{perlb}]
The main idea of the proof is very simple. Let us assume that  $\nu_m$ has a mass at $(\cos(u),\sin(u))=\tilde{\lambda}$, then the term corresponding to this point-mass (and its rotations by multiples of $\pi/2$) in the definition of $\Psi$ is a rotated Cilleruelo field. With positive probability its nodal set does not intersect a given line in this direction (see Figure \ref{pic1}). With positive probability, this term dominates the entire sum and the same is true for $\Psi$ and its restriction $\psi_u$.

To prove the first part of the proposition we note that $\psi_u(t)$ has no zeros if
\begin{equation*}
\sum_{\langle\lambda,(\cos(u),\sin(u))\rangle\neq 0}\sqrt{b_\lambda^2+c_\lambda^2}<|a_{\tilde{\lambda}}|,
\end{equation*}
where we recall $a_\lambda=b_\lambda+ic_\lambda$. For $|\Lambda_m|$ fixed, one immediately obtains \eqref{noi}.

To show \eqref{noi2}, for each $\lambda\neq\tilde{\lambda}$, consider the i.i.d. random variables $X_\lambda:=\sqrt{b_\lambda^2+c_\lambda^2}\sim\chi(2)$. By the CLT one has
\begin{equation*}
\p(\Z=0)\gtrsim\p(\sqrt{r_2(m)}Z_1+r_2(m)<|Z_2|),
\end{equation*}
where $Z_1,Z_2$ are i.i.d. standard Gaussians. It follows that
\[
\begin{aligned}
\p(\Z=0)&\gtrsim\p\br{|Z_1+\sqrt{r_2(m)}|<\frac{1}{\sqrt{r_2(m)}}}\cdot\p(|Z_2|>1)
\\
&\asymp\frac{e^{-r_2(m)/2}}{\sqrt{r_2(m)}},
\end{aligned}
\]
as claimed. The second inequality in \eqref{noi2} follows on recalling \eqref{Nsmall}.
\end{proof}

In the argument above we see that if the spectral measure $\rho_u$ has an atom at the origin, then $\psi_u$ contains a constant term which dominates the entire sum with positive probability. If it has no atom at the origin, then, in some sense, the  most important term is the one which corresponds to the atom which is closest to the origin. The precise formulation is fiven by the following result by Feldheim-Feldheim-Jaye-Nazarov-Nitzan \cite{ffjnn1}. Given a random process, we say that it has a {\bf spectral gap} if its spectral measure vanishes on an interval $(-a,a)$.
\begin{thm}[{\cite[Theorem 1]{ffjnn1}}]
\label{sgc}
A continuous Gaussian process on $[0,T]$ with spectral gap $(-a,a)$ has persistence probability
\begin{equation}
\p(\Z=0)\leq\exp(-ca^2T^2)
\end{equation}
for some absolute constant $c$.
\end{thm}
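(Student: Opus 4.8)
The plan is to follow the reductions that the cited bound naturally suggests and then isolate the mechanism by which a spectral gap suppresses persistence. Writing the process through its spectral (Cramér) representation $f(t)=\int_{|\xi|\ge a}e^{i\xi t}\,dW(\xi)$, the time change $t\mapsto t/a$ carries the gap $(-a,a)$ to $(-1,1)$ and the interval $[0,T]$ to $[0,S]$ with $S=aT$; since the number of zeroes is unchanged by this reparametrisation, it suffices to prove $\p(\Z=0)\le\exp(-cS^2)$ for a process with unit gap. Next, for a continuous process the event $\{\Z=0\}$ is, by the intermediate value theorem, the disjoint union of $\{f>0\text{ on }[0,S]\}$ and $\{f<0\text{ on }[0,S]\}$, and the symmetry $f\leftrightarrow-f$ makes these equiprobable. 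Hence I may bound the one-sided persistence $\p(f\ge0\text{ on }[0,S])$ and double.

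The conceptual core I would exploit is that positivity is globally incompatible with a spectral gap at the origin: any nonzero $F\ge0$ on all of $\R$ has $\widehat F(0)=\int F>0$, so it cannot have a gap containing $0$. Thus positivity on $[0,S]$ survives only through boundary effects, and the available ``room'' shrinks as $S$ grows. The quantitative version tests $f$ against windows: for $\phi\ge0$ supported in $[0,S]$ one has $\int_0^S f\phi\ge0$, while $\int_0^S f\phi$ is a centred Gaussian with variance $\int_{|\xi|\ge1}|\widehat\phi(\xi)|^2\,d\mu(\xi)$. The plan is to probe $f$ with a family of such windows adapted to the gap — essentially the prolate-spheroidal functions time-limited to $[0,S]$ and frequency-concentrated in $(-1,1)$ — so that the pointwise sign condition becomes the requirement that the projection of $f$ onto a roughly $S$-dimensional subspace of oscillatory profiles lies in the thin cone of one-signed functions. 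The extremal single-frequency case $f=R\cos(t-\Theta)$ (spectrum at the gap edge $\xi=1$), whose zeroes are spaced by exactly $\pi$ so that $\p(\Z=0)=0$ once $S>\pi$, shows that frequencies at or beyond the gap force oscillation; spreading the mass to $|\xi|\ge1$ only oscillates faster, so the cone is genuinely thin and its Gaussian measure must be estimated from above.

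The main obstacle is upgrading this picture from an exponential rate to the sharp quadratic rate $a^2T^2$. A crude scheme — partitioning $[0,S]$ into $\asymp S$ unit windows, showing each carries a sign change with probability bounded below, and taking a union or independence bound — only yields $\exp(-cS)$, as does a naive solid-angle estimate for the cone in $\asymp S$ dimensions. Extracting the extra factor $S$ requires showing that the cost of each additional positive window does not stay constant but accumulates, i.e. that conditionally on staying one-signed the process becomes ever more constrained; in the harmonic-analytic formulation this is the statement that the gap-concentration defect of the relevant windows, together with the stacking of the orthogonal constraints, produces a genuinely quadratic exponent, uniformly over all admissible spectral measures supported on $|\xi|\ge1$ and with an absolute constant $c$. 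Making this precise, and controlling the frequency leakage of the extremal windows outside the gap, is exactly the delicate part carried out in \cite{ffjnn1}, whose estimates I would import to conclude.
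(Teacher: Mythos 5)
This statement is not proved in the paper at all: it is quoted verbatim as Theorem 1 of \cite{ffjnn1} and used as a black box, so there is no internal proof to compare your attempt against. Your preliminary reductions are correct and standard: the time change $t\mapsto t/a$ normalises the gap to $(-1,1)$ and converts the claim to $\p(\Z=0)\le\exp(-cS^2)$ with $S=aT$; the event $\{\Z=0\}$ splits by the intermediate value theorem into the two one-signed events, which are equiprobable by the symmetry $f\mapsto-f$; and the observation that a nonnegative integrable function cannot have a spectral gap at the origin is indeed the conceptual engine of the result.

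However, as a proof your proposal has a genuine gap, and you essentially name it yourself: everything that separates the theorem from the easy rate $\exp(-cS)$ is missing. The windowing step as stated is vacuous on its own --- for a single $\phi\ge0$ supported in $[0,S]$, the constraint $\int f\phi\ge0$ is satisfied with probability $1/2$ by any centred Gaussian regardless of how small its variance is, so one needs an additional mechanism converting smallness of $\mathrm{Var}\left(\int f\phi\right)$ into smallness of probability (in \cite{ffjnn1} this comes from playing the gap-concentrated functional off against a second functional of $f$ that cannot be small, and the prolate-type concentration estimates are what make that comparison quantitative and uniform in the spectral measure). Likewise, the passage from ``about $S$ near-orthogonal sign constraints, each costing a constant'' to a cost that accumulates quadratically is precisely the content of the theorem, not a routine consequence of the setup. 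Since you conclude by importing exactly these estimates from \cite{ffjnn1}, your argument is ultimately a well-motivated citation rather than a proof --- which, to be fair, is all the paper itself does with this statement, so nothing you wrote conflicts with the source; it just should not be presented as an independent derivation.
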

We are now in a position to prove Proposition \ref{perub}.
\begin{proof}[Proof of Proposition \ref{perub}]
Since $(\cos u,\sin u)$ is not in the support of $\nu_m$, the spectral measure of $\psi$ has the gap $(-A_m,A_m)$, where
\begin{equation*}
A_m:=\min_{\lambda\in\text{supp}(\nu_m)}\left\{\left|\left\langle\lambda,(\cos u,\sin u)\right\rangle\right|\right\}>0.
\end{equation*}
By Theorem \ref{sgc} it follows that
\begin{equation}
\p(\Z=0)\leq\exp(-\bar{c}A_m^2L^2),
\end{equation}
where $\bar{c}$ is a constant.
\end{proof}

\begin{comment}
\begin{prop}
Let $F$ be an arithmetic random wave of Cilleruelo spectral measure. Then one has, when $u$ is not a multiple of $\pi/2$,
\begin{equation}
\p(\Z=0)\leq\exp(-c\min\{|\cos(u)|,|\sin(u)|\}^2L^2).
\end{equation}
If $u$ is a multiple of $\pi/2$ we have \eqref{noiu0}.
\end{prop}
\begin{proof}
This follows directly from Proposition \ref{Am}.
\end{proof}
\end{comment}

\subsection{Sharp bounds and phase transition}
The goal of this section is to prove Proposition \ref{persb}. Recall that for a density one sequence of energies ${m_k}$ one has \eqref{weak}
\begin{equation*}
%\label{weak}
\nu_{m_k}\Rightarrow\frac{d\theta}{2\pi}.
\end{equation*}
In this case, the limiting spectral measure $\rho$ of $\psi=\Psi(\gamma)$ satisfies
\begin{equation*}
\rho([0,T])=\rho([-T,0])=\frac{1}{2}-\frac{1}{\pi}\arccos(T),
\end{equation*}
independent of the direction $u$. As mentioned in the Introduction, more generally for every $\theta\in[0,\pi/4]$ the limiting spectral measure $\sigma_{\theta}$ of $\Psi$ supported on
\[
\{z\in\mathbb{S}^1 : \arg(z)\in\cup_{j=0}^{3}[j\cdot\pi/2-\theta,j\cdot\pi/2+\theta]
\]
is attainable \cite{kurwig}. When $\theta=0$ and $\theta=\pi/4$ we recover the Cilleruelo and uniform measures respectively. 

In the case when the spectral measure of $\Psi $ is $\sigma_\theta$, the spectral measure $\sigma_{\theta,u}$  of the restriction $\psi_u$ is the projection of $\theta$. Unless $\theta=0$, this projection is continuous with respect to the Lebesgue measure and its support is given by the union of four intervals. 

Since the support is symmetric, it is enough to describe its positive part. It is easy to check that it is given by 
%Take a sequence of energies such that the limiting spectral measure of $\Psi$ is $\sigma_\theta$ for some $\theta\in[0,\pi/4]$. Consider the limiting spectral measure $\rho_{\theta;u}$ of the restricted field $\psi_u$ in  the high energy limit: it is clearly symmetric about the origin, and the following lemma describes its support.
%\begin{lemma}
%\label{supp}
%Assume the limiting spectral measure of $\Psi$ is $\sigma_\theta$ for some $\theta\in[0,\pi/4]$. Then the support of $\rho_{\theta;u}$, the limiting spectral measure of $\psi_u$, intersected with the positive half line gives
\begin{equation}
\label{supp}
\begin{aligned}[]
&
\begin{aligned}[]
&[\sin(u-\theta),\sin(u+\theta)] \\
&\cup[\cos(u+\theta),\cos(u-\theta)]
\end{aligned}
& \quad
\theta<u,\  u+\theta<\pi/4,
\\
&[\sin(u-\theta),\cos(u-\theta)]
&\quad
\theta<u,\ u+\theta\geq\pi/4,
\\
&[0,\sin(u+\theta)]\cup[\cos(u+\theta),1]
&\quad
\theta\geq u,\ u+\theta<\pi/4,
\\
&[0,1]
&\quad
\theta\geq u,\ u+\theta\geq\pi/4.
\end{aligned}
\end{equation}
%\end{lemma}

For measures of this type  \cite{ffjnn1} gives also a lower bound on the persistence probability:
\begin{thm}[{\cite[Corollary 1]{ffjnn1}}]
\label{lb}
Consider a continuous stationary Gaussian process defined on $[0,L]$ with spectral measure that is compactly supported and has a non-trivial absolutely continuous component. Then there exists $L_0>0$ and  such that for every $L\geq L_0$ one has
\begin{equation}
\exp(-c'L^2)\leq\p(\Z=0)
\end{equation}
for some absolute constant $c'$.
\end{thm}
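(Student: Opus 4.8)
The plan is to lower-bound the persistence probability by constructing a deterministic \emph{barrier} that pushes the process up and away from zero, and then paying for this push through a Cameron--Martin shift. Write $f$ for the process, normalised to unit variance (a harmless rescaling), with spectral measure $\mu$ supported in $[-K,K]$, and let $H$ be its Cameron--Martin (reproducing kernel) space; recall that $H$ consists of the functions $h(t)=\int_{\R}e^{ixt}\,\overline{u(x)}\,d\mu(x)$ with $\|h\|_H^2=\|u\|_{L^2(\mu)}^2$. A sample path has no zero on $[0,L]$ whenever it stays positive there, so it suffices to bound $\p(f>0\text{ on }[0,L])$ from below. Fix $\theta>0$. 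If $h\in H$ satisfies $h\ge\theta$ on $[0,L]$, then for the symmetric convex tube $B=\{g:\ \|g\|_{C[0,L]}<\theta\}$ the inclusion $f-h\in B$ forces $f>0$ on $[0,L]$, so
\begin{equation*}
\p\br{f>0\text{ on }[0,L]}\ \ge\ \p\br{\|f-h\|_{C[0,L]}<\theta}.
\end{equation*}

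First I would extract the cost of the shift. By the standard Cameron--Martin inequality for a centred Gaussian measure and a symmetric convex set $B$ (a one-line consequence of the shift density, Jensen's inequality, and the symmetry of $B$), one has $\p(f-h\in B)\ge e^{-\|h\|_H^2/2}\p(f\in B)$, i.e.
\begin{equation*}
\p\br{\|f-h\|_{C[0,L]}<\theta}\ \ge\ e^{-\|h\|_H^2/2}\,\p\br{\|f\|_{C[0,L]}<\theta}.
\end{equation*}
Everything then reduces to two ingredients: (a) a small-ball lower bound for $\p(\|f\|_{C[0,L]}<\theta)$, and (b) a barrier $h$ of controlled norm.

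For (a) I would use that $f$ is band-limited: since $\mathrm{supp}(\mu)\subset[-K,K]$, a Bernstein-type sampling inequality for functions of exponential type yields $\sup_{[0,L]}|f|\le C\max_j|f(t_j)|$ for an oversampled grid $t_j\in[0,L]$ of cardinality $N\asymp L$. Hence $\{\max_j|f(t_j)|<\theta/C\}$ lies in the small-ball event, and by the Gaussian correlation inequality applied to the symmetric slabs $\{|f(t_j)|<\theta/C\}$, together with stationarity,
\begin{equation*}
\p\br{\|f\|_{C[0,L]}<\theta}\ \ge\ \prod_j\p\br{|f(t_j)|<\theta/C}\ =\ q^{N}\ \ge\ e^{-cL},
\end{equation*}
with $q=\p(|f(0)|<\theta/C)\in(0,1)$ fixed. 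In particular this is $e^{-o(L^2)}$ and will be absorbed by the barrier term.

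The main obstacle is (b): producing $h\in H$ with $h\ge\theta$ on $[0,L]$ and $\|h\|_H^2\lesssim L^2$, which is exactly where the non-trivial absolutely continuous component of $\mu$ enters. Let $J$ (with its mirror $-J$) be an interval on which $\mu$ has density $\ge\delta_0>0$. Writing a candidate as $h(t)=\int_{J\cup(-J)}e^{ixt}v(x)\,dx$ with $v(-x)=\overline{v(x)}$ (so that $h$ is real and its spectrum lies in the a.c. band), one gets $\|h\|_H^2\le\delta_0^{-1}\|v\|_{L^2}^2$; thus it suffices to realise a profile that is $\ge\theta$ throughout $[0,L]$ as the Fourier transform of some $v$ supported on the available band, with $\|v\|_{L^2}^2\lesssim L^2$. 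When $J$ may be taken to contain a neighbourhood of the origin this is easy, a mollified indicator of $[0,L]$ suffices and in fact gives the cheaper cost $\|h\|_H^2\lesssim L$. The delicate case is a spectral gap, when the band stays away from $0$: then $\int h=0$ is forced, a uniformly positive barrier must oscillate, and a duality estimate (testing $h$ against $\mathbf 1_{[0,L]}$ and against its best approximation by functions with frequencies in the gap) indicates that the cost is genuinely of order $L^2$. A matching construction, mollifying $\theta\,\mathbf 1_{[0,L]}$, modulating it into the band, and correcting the resulting oscillation so as to keep the profile $\ge\theta$, achieves $\|v\|_{L^2}^2\lesssim L^2$. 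Combining (a) and (b) gives $\p(\Z=0)\ge e^{-c'L^2}$ for all $L\ge L_0$, as claimed. I expect the barrier construction, and specifically keeping the modulated profile uniformly positive without inflating its $L^2(\mu)$-norm beyond $O(L^2)$, to be the step demanding the most care.
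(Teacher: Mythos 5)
You should first note that the paper contains no proof of this statement: Theorem \ref{lb} is quoted as a black box from \cite[Corollary 1]{ffjnn1}, so the only meaningful comparison is with the argument in that reference. Your overall architecture --- reduce persistence to positivity, shift the process by a deterministic barrier $h\in H$ with $h\geq\theta$ on $[0,L]$ at Cameron--Martin cost $e^{-\|h\|_H^2/2}$, and multiply by a small-ball probability --- is indeed the strategy behind the cited lower bound, and your reduction, your Cameron--Martin inequality for symmetric convex sets, and your bound $\|h\|_H^2\leq\delta_0^{-1}\|v\|_{L^2}^2$ using the density lower bound on the band are all correct. (One cosmetic point: the resulting constant $c'$ necessarily depends on the spectral measure --- on the band location and on $\delta_0$ --- so it is not ``absolute''; the same caveat applies to the statement as transcribed in the paper.)

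There are, however, two gaps, one minor and one at the heart of the argument. The minor one is in step (a): the local sampling inequality $\sup_{[0,L]}|f|\leq C\max_j|f(t_j)|$ with nodes only inside $[0,L]$ is false as stated for band-limited functions --- a function of exponential type can be small at every node of a finite window yet large between them, because the reproducing kernels available under oversampling decay too slowly to discard the samples outside the window. This is easily repaired by dropping sampling altogether: cover $[0,L]$ by $\asymp L$ unit intervals $I_i$ and apply the Gaussian correlation inequality directly to the symmetric convex events $\{\sup_{I_i}|f|<\theta\}$, giving $\p(\|f\|_{C[0,L]}<\theta)\geq q^{O(L)}=e^{-O(L)}$. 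The serious gap is in step (b), which you yourself flag as the crux but then only assert: ``mollifying $\theta\,\mathbf{1}_{[0,L]}$, modulating it into the band, and correcting the resulting oscillation'' is not a construction. Multiplying a profile $g\geq\theta$ by $\cos(at)$ destroys positivity on all but an $O(1/a)$-fraction of $[0,L]$, and the ``correction'' --- restoring $h\geq\theta$ while keeping the spectrum inside $\pm J$ and the norm $O(L^2)$ --- is exactly the difficulty the whole theorem turns on; as written your proof is incomplete precisely there. The gap is fillable along lines close to what you gesture at, e.g.\ by tiling rather than mollifying: take a nonnegative Fej\'er-type bump $\phi=|\chi|^2$ with $\widehat{\chi}$ supported in an interval of length $\eta/2$, so that $\widehat{\phi}$ is supported in $[-\eta,\eta]$, $\phi\geq 0$, and $\phi$ decays like $|t|^{-3}$; set $h_0(t)=\cos(a't)\,\phi(t)$ with $a'$ the centre of $J$ and $\eta$ small, so that $\widehat{h_0}$ is supported in $\pm J$ and $h_0\geq c>0$ on a fixed interval of length $\delta\ll 1/a'$; then put $h(t)=\sum_{k=0}^{O(L/\delta)}h_0(t-k\delta)$. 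The negative tails of the translates sum to at most $c/2$ (the decay constants are fixed, independent of $L$), so after normalisation $h\geq\theta$ on all of $[0,L]$, while the triangle inequality gives $\|h\|_{L^2}^2\leq (L/\delta)^2\|h_0\|_{L^2}^2\lesssim L^2$ and hence $\|h\|_H^2\lesssim\delta_0^{-1}L^2$. With that lemma supplied and the small-ball step repaired, your argument closes and reproduces the quoted bound.
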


%\begin{prop}
%	\label{ulb}
%Suppose the limiting spectral measure of $\Psi$ is $\sigma_\theta$. If $\theta\neq 0$, then there exist $C,L_0>0$ such that for every $L\geq L_0$ one has
%\begin{equation}
%\label{lbeq}
%\exp(-CL^2)\leq\p(\Z=0)
%\end{equation}
%along the line of direction $u$. If in addition $\theta<u$, one has
%\begin{equation}
%\label{ubeq}
%\p(\Z=0)\leq\exp(-c\sin(u-\theta)^2L^2)
%\end{equation}
%for some absolute constant $c>0$, whereas if $u\leq\theta$, one has
%\begin{equation}
%\label{aseq}
%\p(\Z=0)\asymp\exp(-c' L)
%\end{equation}
%for some absolute constant $c'>0$.
%\end{prop}
%Before proving Proposition \ref{ulb}, we complete the proof of Proposition \ref{persb}.
%\begin{proof}[Proof of Proposition \ref{persb}]
%Statement \eqref{aseq} is exactly \eqref{tr1}, whereas combining \eqref{lbeq} and \eqref{ubeq} yields \eqref{tr2}.
%\end{proof}

\begin{proof}[Proof of Proposition \ref{persb}]
%The spectral measure $\rho_{\theta;u}$ of $\psi_u$ is compactly supported and has a non-trivial absolutely continuous component thanks to Lemma \ref{supp} and the assumption that $\theta\neq 0$. By Theorem \ref{lb} we get the lower bound \eqref{lbeq}.

First, let us consider the case $\theta<u$. According to \eqref{supp}, the spectral measure has a spectral gap and continuous with respect to the Lebesgue measure. By Theorems \ref{sgc} and \ref{lb}   we have 
\[
\exp(-c_1 L^2)\le \p(\Z=0)\le \exp(-c_2 L^2)
\]
for some positive constants $c_1$ and $c_2$ (that depend on $u$ and $\theta$).

Next, we consider the case $u\le\theta$. In this case the density of the spectral measure is a continuous non-vanishing function is a neighbourhood of the origin. For  such functions Theorem 1 of \cite{fefeni} states that
\[
\log \p(\Z=0) \approx L.
\]

\end{proof}

\subsection{Cilleruelo field}
The aim of Section \ref{seccil} is to prove Proposition \ref{percill}. 
\label{seccil}
We have the explicit formula for the restriction
\begin{multline}
\label{cilpro}
f(t)=f_u(t)=\frac{\sqrt{2}}{2}\left[b_1\cos\left(t\cos(u)\right)+b_2\cos\left(t\sin(u)\right)\right.\\\left.+c_1\sin\left(t\cos(u)\right)+c_2\sin\left(t\sin(u)\right)\right],
\end{multline}
with $b_1,b_2,c_1,c_2\sim\N(0,1)$ and i.i.d. The covariance function is%(see also \ref{rf})
\[
\kappa_{u}(t)=\frac{1}{2}\left[\cos\left(t\cos(u)\right)+\cos\left(t\sin(u)\right)\right]
\]
and the spectral measure
\begin{equation}
\rho=\rho_u:=\frac{1}{4}(\delta_{\pm\cos(u)}+\delta_{\pm\sin(u)}).
\end{equation}
The behaviour of $\rho$ as $u$ varies already gives an indication that the cases $u=0,\pi/4$ are in some sense `special'. Indeed, in these cases $\rho$ is supported at three and two points respectively instead of four. Another indication in this sense is given by the following lemma.
\begin{lemma}[Relation between $f$ and its derivatives]
	\label{der}
	If $u=0$ then $f(0)$ is not determined by $f'(0),f''(0),\dots,f^{(k)}(0)$ for any $k$. If $u=\pi/4$ then
	\begin{equation*}
	f''(t)=-\frac{f(t)}{2}
	\end{equation*}
	($f$ is periodic).
	
	More generally, for every $0\leq u\leq\pi/4$, one has the relation
	\begin{equation*}
	\sin(u)^2\cos(u)^2f(t)+f''(t)+f^{(4)}(t)=0
	\end{equation*}
	for all $t$.
\end{lemma}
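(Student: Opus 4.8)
The plan is to prove the three claims by directly exploiting the structure of the restricted field $f=f_u$ in \eqref{cilpro}, which is a linear combination of four trigonometric functions whose frequencies are $\cos(u)$ and $\sin(u)$. The key observation is that $f$ is built from the four functions $\cos(t\cos u)$, $\sin(t\cos u)$, $\cos(t\sin u)$, $\sin(t\sin u)$, and that these are eigenfunctions of the operator $\frac{d^2}{dt^2}$ with eigenvalues $-\cos^2 u$ and $-\sin^2 u$ respectively. Thus the general relation should follow by finding a linear differential operator with constant coefficients that annihilates all four basis functions simultaneously, and then noting that it must therefore annihilate $f$.

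First I would treat the general relation. Each summand $\cos(t\cos u)$ and $\sin(t\cos u)$ satisfies $h''=-\cos^2(u)\,h$, so it is killed by the operator $(\frac{d^2}{dt^2}+\cos^2 u)$; likewise the other two summands are killed by $(\frac{d^2}{dt^2}+\sin^2 u)$. Hence every summand, and therefore $f$ itself, is annihilated by the composite operator
\begin{equation*}
\left(\frac{d^2}{dt^2}+\cos^2 u\right)\left(\frac{d^2}{dt^2}+\sin^2 u\right).
\end{equation*}
Expanding this product gives $\frac{d^4}{dt^4}+(\cos^2 u+\sin^2 u)\frac{d^2}{dt^2}+\cos^2 u\sin^2 u$. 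Using $\cos^2 u+\sin^2 u=1$, this is exactly $\frac{d^4}{dt^4}+\frac{d^2}{dt^2}+\sin^2(u)\cos^2(u)$, so $f^{(4)}(t)+f''(t)+\sin^2(u)\cos^2(u)\,f(t)=0$, which is the claimed identity.

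Next I would handle the two special endpoints as degenerate instances of the same idea. For $u=\pi/4$ both frequencies coincide at $\cos(\pi/4)=\sin(\pi/4)=1/\sqrt2$, so $f$ is a linear combination of $\cos(t/\sqrt2)$ and $\sin(t/\sqrt2)$ only, each satisfying $h''=-\frac12 h$; hence $f''=-\frac12 f$ directly, and $f$ is $2\pi\sqrt2$-periodic. For $u=0$ one frequency degenerates to $\cos(0)=1$ and the other to $\sin(0)=0$, so the two terms $b_2\cos(0)=b_2$ and $c_2\sin(0)=0$ collapse: $f(t)=\frac{\sqrt2}{2}[b_1\cos t+c_1\sin t+b_2]$ contains a constant term $\frac{\sqrt2}{2}b_2$. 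I would observe that every derivative $f^{(k)}(0)$ for $k\ge1$ depends only on $b_1,c_1$ (the constant disappears under differentiation), whereas $f(0)=\frac{\sqrt2}{2}(b_1+b_2)$ genuinely involves $b_2$, which is an independent Gaussian; therefore no finite collection of derivatives at the origin can determine $f(0)$. I do not expect a serious obstacle here, since the whole lemma is an elementary consequence of the eigenfunction structure; the only point requiring slight care is phrasing the $u=0$ claim as a genuine statement about stochastic independence of $b_2$ from the data $f'(0),\dots,f^{(k)}(0)$, rather than as a purely algebraic assertion.
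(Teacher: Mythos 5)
Your proof is correct: the paper states Lemma \ref{der} without any proof, and your argument via the annihilating operator $\left(\tfrac{d^2}{dt^2}+\cos^2 u\right)\left(\tfrac{d^2}{dt^2}+\sin^2 u\right)$, together with the observation that for $u=0$ the term $\tfrac{\sqrt{2}}{2}b_2$ is a constant invisible to all derivatives while $b_2$ is independent of $(b_1,c_1)$, is exactly the elementary verification the authors evidently intended. No gaps.
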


We will study the persistence probability of $f_u$, starting with the special cases $u=0,\pi/4$. Here we may actually compute the {\em distribution} of nodal intersections $\Z=\Z_f$.

For $u=0$ the expression \eqref{cilpro} simplifies to
\begin{equation*}
f_0(t)=\frac{\sqrt{2}}{2}[b_1\cos(t)+b_2+c_1\sin(t)]
\end{equation*}
and the covariance function is thus
\begin{equation*}
\kappa_0(t)=\cos^2\left(\frac{t}{2}\right).
\end{equation*}

\begin{lemma}[Bogomolny and Schmit \cite{bogsc7}]
	\label{bogsmi}
	For a Gaussian process defined on $[0,T]$ with covariance function $\kappa$,
	\begin{equation*}
	\p(\Z\text{ is even})=\frac{1}{2}+\frac{1}{\pi}\arcsin(\kappa(T)).
	\end{equation*}
\end{lemma}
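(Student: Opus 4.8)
The plan is to reduce the parity of $\Z$ to the signs of $f$ at the two endpoints, and then to evaluate the resulting orthant probability for a bivariate Gaussian.

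First I would argue that, almost surely, $f$ has finitely many zeros on $[0,T]$ and every zero is a transversal sign change. For a centred stationary Gaussian process with $C^1$ paths and non-degenerate marginals this is standard: by a Bulinskaya-type argument the event that $f(t)=f'(t)=0$ for some $t$ has probability zero (it is the projection of a codimension-two condition), so no zero is tangential, and smoothness prevents accumulation. Granting this, the number of sign changes of a continuous function on $[0,T]$ is even precisely when $f(0)$ and $f(T)$ have the same sign and odd when they have opposite signs, since each transversal zero flips the sign exactly once. Hence, up to a null event,
\[
\{\Z \text{ is even}\} = \{f(0)f(T)>0\},
\]
and therefore
\[
\p(\Z \text{ is even}) = \p\br{f(0)>0,\ f(T)>0} + \p\br{f(0)<0,\ f(T)<0}.
\]

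Next I would use the symmetry $f \overset{d}{=} -f$, valid because $f$ is centred Gaussian, to conclude that the two orthant probabilities above are equal, so $\p(\Z \text{ is even}) = 2\,\p\br{f(0)>0,\ f(T)>0}$. The pair $(f(0),f(T))$ is a centred bivariate Gaussian; since the field is normalised to unit variance we have $\Var f(0)=\Var f(T)=\kappa(0)=1$ and $\mathrm{Cov}(f(0),f(T))=\kappa(T)$, so the correlation is exactly $\rho=\kappa(T)$. Sheppard's classical formula for the positive-orthant probability of a standard bivariate Gaussian of correlation $\rho$ gives
\[
\p\br{f(0)>0,\ f(T)>0} = \frac14 + \frac{1}{2\pi}\arcsin(\rho).
\]
Substituting and doubling yields $\p(\Z \text{ is even}) = \tfrac12 + \tfrac1\pi \arcsin(\kappa(T))$, as claimed. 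If one prefers to keep the argument self-contained rather than quote Sheppard's formula, it can be derived on the spot: writing $(X,Y)=(f(0),f(T))$ with joint density $\phi_\rho$, one uses the identity $\partial_\rho \phi_\rho = \partial_x\partial_y \phi_\rho$ to reduce $\frac{d}{d\rho}\p(X>0,Y>0)$ to the value of $\phi_\rho$ at the corner $(0,0)$, namely $\frac{1}{2\pi\sqrt{1-\rho^2}}$, and integrates from $\rho=0$, where independence gives $\tfrac14$.

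The main obstacle is the almost-sure regularity statement in the first paragraph: one must rule out tangential zeros and accumulation of zeros, so that the endpoint-sign criterion genuinely captures the parity of $\Z$. I expect this to follow from the $C^1$-smoothness and non-degeneracy of $f$ together with a standard Kac--Rice/Bulinskaya argument; the endpoint events $\{f(0)=0\}$ and $\{f(T)=0\}$ are null and may be discarded. Everything after this reduction is an exact computation for a two-dimensional Gaussian vector, so no asymptotics or error terms are involved.
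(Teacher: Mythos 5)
The paper does not prove this lemma at all --- it is quoted from Bogomolny and Schmit --- so there is no internal proof to compare against. Your argument is correct and is essentially the standard derivation of this identity: almost surely every zero is a transversal sign change (Bulinskaya/Kac--Rice regularity), so $\{\Z \text{ even}\}$ coincides up to a null set with $\{f(0)f(T)>0\}$, and Sheppard's formula for the positive orthant of a standard bivariate Gaussian with correlation $\kappa(T)$ gives $2\left(\frac14+\frac{1}{2\pi}\arcsin(\kappa(T))\right)=\frac12+\frac1\pi\arcsin(\kappa(T))$. The only caveat worth recording is that the lemma as stated is implicitly restricted to centred, unit-variance, $C^1$ processes with non-degenerate marginals and no atoms of tangential zeros; as you note, this must be checked for the processes to which it is applied (here the restricted Cilleruelo fields $f_0$ and $f_{\pi/4}$, which are trigonometric polynomials for which tangency is a null event, e.g.\ for $f_0$ it requires $|b_2|=\sqrt{b_1^2+c_1^2}$). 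Your observation that the formula remains valid in the degenerate endpoints $\kappa(T)=\pm1$ is also consistent with the paper's use of the lemma at $L=2\pi n$.
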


\begin{prop}
	\label{distr0}
	If $u=0$ the r.v. $\Z=\Z_f$ is distributed as follows: for $0\leq L\leq 2\pi$,%\footnote{Here and in what follows $\lfloor x\rfloor$ denotes the floor function of a real number $x$, i.e. the largest integer $\leq x$.}
	\begin{align*}
	\p(\Z=0)&=\frac{1}{4}\left(3-\frac{\sqrt{2}}{\pi}L\right)+\frac{1}{2\pi}\arcsin(\cos(L/2)^2),
	\\
	\p(\Z=1)&=\frac{1}{2}-\frac{1}{\pi}\arcsin(\cos(L/2)^2),
	\\
	\p(\Z=2)&=\frac{1}{4}\left(\frac{\sqrt{2}}{\pi}L-1\right)+\frac{1}{2\pi}\arcsin(\cos(L/2)^2);
	\end{align*}
	for $L\geq 2\pi$, letting $n=\lfloor L/2\pi\rfloor\in\mathbb{N}$ to be the integer part of $L/2\pi$,
\[
\begin{aligned}
	&\p(\Z=0)=1-\frac{\sqrt{2}}{2},
	\\
	&\p(\Z=2n)=-\frac{L}{2\pi\sqrt{2}}+\frac{n+1}{\sqrt{2}}-\frac{1}{4}+\frac{1}{2\pi}\arcsin(\cos(L/2)^2),
	\\
	&\p(\Z=2n+1)=\frac{1}{2}-\frac{1}{\pi}\arcsin(\cos(L/2)^2),
	\\
	&\p(\Z=2n+2)=\frac{L}{2\pi\sqrt{2}}-\frac{n}{\sqrt{2}}-\frac{1}{4}+\frac{1}{2\pi}\arcsin(\cos(L/2)^2).
	\end{aligned}
\]
\end{prop}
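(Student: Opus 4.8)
The plan is to reduce $f_0$ to an explicit one-parameter family, read off the number of zeros directly from its geometry, and then pin down the full distribution using three global inputs: the parity formula, the known mean, and a single solid-angle computation.

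First I would put the random trigonometric polynomial in amplitude-phase form. Writing $A=\sqrt{b_1^2+c_1^2}$ and $b_1\cos t+c_1\sin t=A\cos(t-\varphi)$, where $\varphi$ is uniform on $[0,2\pi)$ and independent of $A$, we obtain $f_0(t)=\frac{\sqrt2}{2}\br{A\cos(t-\varphi)+b_2}$, so the zeros on $[0,L]$ are exactly the solutions of $\cos(t-\varphi)=-b_2/A$. Since a shifted cosine meets any horizontal level at most twice per period, any window of length $\le 2\pi$ contains at most two zeros, while a window containing a full period contains exactly two when $|b_2|<A$ and none when $|b_2|>A$ (the tangency $|b_2|=A$ has probability zero and is discarded). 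This fixes the support: $\Z\in\{0,1,2\}$ for $0\le L\le 2\pi$, and for $L=2\pi n+r$ with $0\le r<2\pi$ and $n\ge 1$ one has $\Z\in\{0\}\cup\{2n,2n+1,2n+2\}$. Moreover, on $[0,2\pi]$ the function $g(t)=A\cos(t-\varphi)+b_2$ takes values exactly in $[b_2-A,b_2+A]$, so for $L\ge 2\pi$ the event $\{\Z=0\}$ coincides with $\{|b_2|>A\}$ independently of the phase.

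Next I would assemble the global constraints. The parity of $\Z$ is governed by Lemma \ref{bogsmi}: with $\kappa_0(L)=\cos^2(L/2)$ this gives $\p(\Z\text{ even})=\tfrac12+\tfrac1\pi\arcsin(\cos^2(L/2))$ and the complementary odd probability. The mean is supplied by \eqref{expect}, namely $\E{\Z}=L/(\pi\sqrt2)$. Finally, for $L\ge 2\pi$ I would compute $\p(\Z=0)=\p(|b_2|>\sqrt{b_1^2+c_1^2})$ by spherical symmetry of the standard Gaussian vector $(b_1,c_1,b_2)\in\R^3$: the event is the double cone of half-angle $\pi/4$ about the $b_2$-axis, whose normalized solid angle equals $1-\cos(\pi/4)=1-\tfrac{\sqrt2}{2}$, which is precisely the claimed value $\p(\Z=0)=1-\tfrac{\sqrt2}{2}$ for $L\ge 2\pi$.

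With the support and these inputs in hand the distributions follow by solving a small linear system. For $0\le L\le 2\pi$ there are three unknowns $\p(\Z=j)$, $j=0,1,2$; parity fixes $\p(\Z=1)$, and the two relations $\p(\Z=0)+\p(\Z=2)=\p(\Z\text{ even})$ and $\p(\Z=1)+2\p(\Z=2)=\E{\Z}$ determine the rest, giving the stated expressions. For $L\ge 2\pi$ there are four unknowns $\p(\Z\in\{0,2n,2n+1,2n+2\})$; here $\p(\Z=0)=1-\tfrac{\sqrt2}{2}$ and $\p(\Z=2n+1)=\p(\Z\text{ odd})$ are known, and normalization together with the mean closes the system. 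A convenient bookkeeping is that normalization forces $\p(\Z=2n+1)+\p(\Z=2n)+\p(\Z=2n+2)=\tfrac{\sqrt2}{2}$, so the $\arcsin$ terms cancel when the difference $\p(\Z=2n+2)-\p(\Z=2n)$ is extracted from the mean; routine algebra then yields all four formulas. I expect the main obstacle to be the support analysis rather than the algebra: one must argue carefully that for $L\ge 2\pi$ the count is $0$ exactly on $\{|b_2|>A\}$ regardless of $\varphi$, while otherwise the full periods contribute precisely $2n$ zeros and the residual window contributes $0$, $1$, or $2$. Once this combinatorial picture is secure, the conceptual point is that parity, the mean, and the single solid-angle probability together consistently determine every probability.
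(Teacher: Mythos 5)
Your proof is correct and follows essentially the same route as the paper's: periodicity of $f_0$ pins down the support of $\Z$, and then the parity formula of Lemma \ref{bogsmi} together with the mean \eqref{expect} closes the resulting linear system. The only variation is that you obtain $\p(\Z=0)=1-\tfrac{\sqrt{2}}{2}$ for $L\ge 2\pi$ directly as the Gaussian measure of the double cone $\{|b_2|>\sqrt{b_1^2+c_1^2}\}$ in $\R^3$, whereas the paper extracts the same value from the expectation at $L=2\pi n$, where the support is just $\{0,2n\}$; both arguments are valid and yield the same answer.
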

\begin{proof}
	For $u=0$ it is elementary to see that for $L=2\pi n$, $n\in\mathbb{N}$, one has
	\begin{equation}
	\label{banale}
	\p(\Z=0)=1-\sqrt{2}/2 \qquad \text{and} \qquad \p(\Z=2n)=\sqrt{2}/2.
	\end{equation}
Indeed, $f_0$ is periodic of period $2\pi$, hence in $[0,2\pi n]$ it has either $2L$ zeroes or none, depending on whether the straight line $y=b_2$ intersects the sine wave $y=b_1\cos(x)+c_1\sin(x)$. From the formula for the expectation of $\Z$ \eqref{expect} we then gather \eqref{banale}. We immediately deduce that $\p(\Z=0)=1-\sqrt{2}/2$ for every $L\geq 2\pi$.
	
	Let $0\leq L\leq 2\pi$. As $f_0$ is of period $2\pi$, it may have $0,1$ or $2$ zeroes. The respective probabilities are computed using Lemma \ref{bogsmi} together with the formula \eqref{expect} for the expected value of $\Z$. The remaining probabilities now follow from the periodicity of $f_0$.
\end{proof}

Now fix $u=\pi/4$ to obtain
\begin{equation*}
f_{\pi/4}(t)=\frac{\sqrt{2}}{2}[(b_1+b_2)\cos(t/\sqrt{2})+(c_1+c_2)\sin(t/\sqrt{2})]
\end{equation*}
which is just sine wave with a random phase shift and amplitude.
Its covariance function is
\begin{equation*}
\kappa_{\pi/4}(t)=\cos(t/\sqrt{2}).
\end{equation*}

\begin{prop}
	\label{distrpi4}
	Let $u=\pi/4$ and $n':=\lfloor L/\pi\sqrt{2}\rfloor$. For
	\begin{equation*}
	n'\pi\sqrt{2}\leq L\leq(n'+1)\pi\sqrt{2}
	\end{equation*}
	the r.v. $\Z=\Z_f$ is distributed as follows: %\footnote{We denote $\{x\}:=x-\lfloor x\rfloor$ the fractional part of a real number $x$.}:
\[
\begin{aligned}
	&\p(\Z=n')=1-\left\{\frac{L}{\pi\sqrt{2}}\right\},
	\\
	&\p(\Z=n'+1)=\left\{\frac{L}{\pi\sqrt{2}}\right\},
	\end{aligned}
\]
where $\{x\}:=x-\lfloor x\rfloor$ is the fractional part of a real number $x$.
\end{prop}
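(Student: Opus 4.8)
The plan is to exploit the very special structure of $f_{\pi/4}$: unlike a generic $f_u$, it is a single sine wave of random amplitude and phase. Writing $B := b_1 + b_2$ and $C := c_1 + c_2$, we have $B,C\sim\N(0,2)$ independent, and
\[
f_{\pi/4}(t) = \frac{\sqrt2}{2}\left[B\cos(t/\sqrt2) + C\sin(t/\sqrt2)\right] = A\cos\!\left(\tfrac{t}{\sqrt2} - \phi\right),
\]
where $A = \frac{\sqrt2}{2}\sqrt{B^2 + C^2}$ and $\phi = \arg(B + iC)$. Two facts are immediate from the isotropy of the Gaussian vector $(B,C)$: first, $A>0$ almost surely, so $f_{\pi/4}$ is a genuine (non-degenerate) cosine; second, $\phi$ is uniformly distributed on $[0,2\pi)$ and independent of $A$.

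First I would describe the zero set. Since $A>0$ a.s., the zeros of $f_{\pi/4}$ are exactly the solutions of $\cos(t/\sqrt2 - \phi)=0$, i.e.\ the points $t = \sqrt2(\phi + \pi/2) + k\,\pi\sqrt2$, $k\in\mathbb{Z}$. Thus the zeros form a shifted lattice of spacing $d := \pi\sqrt2$ (half the period $2\pi\sqrt2$) with offset $z_0 := \sqrt2(\phi + \pi/2) \bmod d$. As $\phi$ runs uniformly over $[0,2\pi)$, the quantity $\sqrt2\,\phi$ sweeps exactly two full periods of length $d$, so $z_0$ is uniformly distributed on $[0,d)$. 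In other words, the zero set is a stationary randomly-shifted lattice with spacing $d = \pi\sqrt2$.

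The number of points of such a lattice inside $[0,L]$ differs from $L/d$ by less than one, so it takes only the two values $n' = \lfloor L/(\pi\sqrt2)\rfloor$ and $n'+1$ (the boundary events, in which a lattice point lands on $0$ or $L$, have probability zero and may be discarded). To pin down the split, I would use that the expected count is exactly $\E{\Z_f} = L/(\pi\sqrt2)$ by \eqref{expect}. Writing $p := \P(\Z = n'+1)$ and using $\P(\Z = n') = 1-p$ gives $\E{\Z} = n'(1-p) + (n'+1)p = n' + p$, whence $p = L/(\pi\sqrt2) - n' = \{L/(\pi\sqrt2)\}$. This yields both displayed probabilities.

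There is essentially no hard step here: the only point requiring a little care is the uniformity of the offset $z_0$, which rests on the rotational invariance of $(B,C)$ together with the fact that rescaling the uniform phase by $\sqrt2$ produces a whole number (namely two) of lattice periods. Almost sure positivity of the amplitude rules out degenerate samples, and the almost sure distinctness of the endpoints $0,L$ from lattice points justifies ignoring the measure-zero boundary events.
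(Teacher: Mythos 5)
Your proof is correct, and its core coincides with the paper's: both identify $f_{\pi/4}$ as a single sinusoid of period $2\pi\sqrt{2}$ whose zeros form a randomly shifted lattice of spacing $\pi\sqrt{2}$, so that almost surely there is exactly one zero in each interval $[n\pi\sqrt{2},(n+1)\pi\sqrt{2}]$ and hence $\Z\in\{n',n'+1\}$. The difference lies in how the split between the two values is pinned down. The paper invokes the Bogomolny--Schmit parity formula (Lemma \ref{bogsmi}), $\p(\Z\text{ even})=\tfrac12+\tfrac1\pi\arcsin(\kappa(L))$ with $\kappa(L)=\cos(L/\sqrt2)$, which singles out the even member of $\{n',n'+1\}$; you instead combine the two-point support of $\Z$ with the exact expectation $\E{\Z_f}=L/(\pi\sqrt2)$ from \eqref{expect}, solving $n'+p=L/(\pi\sqrt2)$ for $p=\p(\Z=n'+1)$. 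You also sketch a third, fully self-contained route: the offset of the zero lattice is uniform on $[0,\pi\sqrt2)$ because the phase $\phi=\arg(B+iC)$ is uniform and rescaling by $\sqrt2$ sweeps exactly two lattice periods, which yields the distribution by a direct length computation without either auxiliary result. All three arguments are valid and give the same answer; yours is slightly more elementary in that it avoids the parity lemma, while the paper's choice keeps the proof parallel to that of Proposition \ref{distr0}, where the parity lemma is genuinely needed.
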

\begin{proof}
	The function $f_{\pi/4}$ is periodic of period $2\pi\sqrt{2}$, and has zeros at
	\begin{equation*}
	\frac{1}{\sqrt{2}}\left[-2\arctan\left(\frac{b_1+b_2}{c_1+c_2}\right)+2\pi k\right], \qquad k\in\mathbb{Z}.
	\end{equation*}
	In particular a.s. there is exactly one zero in each interval
	\begin{equation*}
	[n\pi\sqrt{2},(n+1)\pi\sqrt{2}], \qquad n=0,1,2,\dots.
	\end{equation*}
	The claims of the present Proposition are now all established thanks to Lemma \ref{bogsmi}.
\end{proof}

Propositions \ref{distr0} and \ref{distrpi4} yield in particular the persistence of $\Z_f$ in the cases $u=0,\pi/4$.
\begin{cor}
	\label{precper}
	If $u=0$ then
	\begin{equation}
	\label{noiu0}
	\p(\Z=0)=
	\begin{cases}
	\frac{1}{4}\left(3-\frac{\sqrt{2}}{\pi}L\right)+\frac{1}{2\pi}\arcsin(\cos(L/2)^2)
	&
	0\leq L\leq 2\pi,
	\\
	1-\frac{\sqrt{2}}{2}
	&
	L\geq 2\pi.
	\end{cases}
	\end{equation}
	If $u=\pi/4$ then
	\begin{equation}
	\p(\Z=0)=
	\begin{cases}
	1-L/\pi\sqrt{2}
	&
	0\leq L\leq\pi\sqrt{2},
	\\
	0
	&
	L\geq\pi\sqrt{2}.
	\end{cases}
	\end{equation}
\end{cor}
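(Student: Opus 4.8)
The plan is to obtain the corollary by simply specialising the distributional identities of Propositions \ref{distr0} and \ref{distrpi4} to the event $\{\Z=0\}$. All of the probabilistic work — the Bogomolny--Schmit parity formula (Lemma \ref{bogsmi}), the periodicity of $f_u$ in the two special directions, and the expectation formula \eqref{expect} — has already been carried out in proving those two propositions, so there is no genuine obstacle here; the only task is careful bookkeeping of the ranges of $L$.

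For $u=0$ I would read off $\p(\Z=0)$ directly. In the regime $0\le L\le 2\pi$, Proposition \ref{distr0} gives the value $\tfrac14\big(3-\tfrac{\sqrt2}{\pi}L\big)+\tfrac1{2\pi}\arcsin(\cos(L/2)^2)$ verbatim, which is the first branch of \eqref{noiu0}. For $L\ge 2\pi$, the periodicity argument in the proof of Proposition \ref{distr0} pins the value down already at $L=2\pi$, namely $\p(\Z=0)=1-\tfrac{\sqrt2}{2}$, independently of $L$; this is the second branch.

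For $u=\pi/4$ I would use Proposition \ref{distrpi4} together with the observation that $\{\Z=0\}$ can occur only when $n'=\lfloor L/(\pi\sqrt2)\rfloor=0$, i.e.\ when $0\le L<\pi\sqrt2$. In that range $\p(\Z=0)=\p(\Z=n')=1-\{L/(\pi\sqrt2)\}=1-L/(\pi\sqrt2)$, since the fractional part equals its argument there. Once $L\ge\pi\sqrt2$ we have $n'\ge 1$, so the almost-sure number of zeros of the pure (phase-shifted) sine wave $f_{\pi/4}$ on $[0,L]$ is at least one, giving $\p(\Z=0)=0$; the two branches agree at the endpoint $L=\pi\sqrt2$, where both formulas return $0$. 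This completes both cases of the corollary.
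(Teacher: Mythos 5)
Your proposal is correct and follows exactly the paper's route: the corollary is obtained by reading off the event $\{\Z=0\}$ from the distributional results of Propositions \ref{distr0} and \ref{distrpi4}, with the only care needed being the ranges of $L$ (and the observation that for $u=\pi/4$ the value $0$ is attainable only when $n'=0$). The paper's own proof is the one-line "This follows from Propositions \ref{distr0} and \ref{distrpi4}," so your version simply makes the same deduction explicit.
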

\begin{proof}
	This follows from Propositions \ref{distr0} and \ref{distrpi4}.
\end{proof}
Furthermore, for the Cilleruelo field along the directions $u=0,\pi/4$ one may compute the second factorial moment of $\Z$ exactly.
\begin{cor}
	Let $u=0$ and $n:=\lfloor L/2\pi\rfloor$. Then the second factorial moment of nodal intersections number $\Z$ is
	\begin{equation*}
	\E{\Z(\Z-1)}=\frac{(4n+1)L}{\pi\sqrt{2}}-2\sqrt{2}n(n+1)-\frac{1}{2}+\frac{1}{\pi}\arcsin(\cos(L/2)^2)
	\end{equation*}
	with asymptotic
	\begin{equation*}
	\E{\Z(\Z-1)}\sim\frac{L^2}{\pi^2\sqrt{2}}+O(L)
	\end{equation*}
	as $L\to\infty$. Let $u=\pi/4$ and $n':=\lfloor L/\pi\sqrt{2}\rfloor$. Then the second factorial moment of $\Z$ is given by
	\begin{equation*}
	\E{\Z(\Z-1)}=n'\left(n'-1+2\left\{\frac{L}{\pi\sqrt{2}}\right\}\right)
	\end{equation*}
	with asymptotic
	\begin{equation*}
	\E{\Z(\Z-1)}\sim\frac{L^2}{2\pi^2}+O(L)
	\end{equation*}
	as $L\to\infty$.
\end{cor}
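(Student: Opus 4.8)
The plan is to read off the distribution of $\Z$ in each of the two special directions from Propositions \ref{distr0} and \ref{distrpi4} and then compute the second factorial moment directly from its definition $\E{\Z(\Z-1)}=\sum_{k\ge 0}k(k-1)\P(\Z=k)$. Since in both directions $\Z$ is supported on at most four integers, no analytic machinery beyond elementary algebra is needed; the work is entirely in organising the sums.

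For $u=\pi/4$ the variable $\Z$ takes only the two values $n'$ and $n'+1$, with probabilities $1-\{L/\pi\sqrt2\}$ and $\{L/\pi\sqrt2\}$. Writing $\delta'=\{L/\pi\sqrt2\}$, I would expand
\[
\E{\Z(\Z-1)}=n'(n'-1)(1-\delta')+(n'+1)n'\,\delta',
\]
factor out $n'$, and simplify the bracket to $n'-1+2\delta'$, which is exactly the stated closed form. Since $n'=\lfloor L/\pi\sqrt2\rfloor\sim L/(\pi\sqrt2)$, the dominant contribution is $n'^2\sim L^2/(2\pi^2)$, giving the claimed asymptotic $\E{\Z(\Z-1)}\sim L^2/(2\pi^2)$.

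The direction $u=0$ is the only place that requires real care. Here, for $L\ge 2\pi$, Proposition \ref{distr0} gives $\Z\in\{0,2n,2n+1,2n+2\}$ with $n=\lfloor L/2\pi\rfloor$, and $\Z=0$ contributes nothing to the factorial moment, so
\[
\E{\Z(\Z-1)}=(2n)(2n-1)p_0+(2n+1)(2n)p_1+(2n+2)(2n+1)p_2,
\]
where $p_0,p_1,p_2$ denote $\P(\Z=2n),\P(\Z=2n+1),\P(\Z=2n+2)$. I would abbreviate $a=L/(2\pi\sqrt2)$ and $b=\frac{1}{2\pi}\arcsin(\cos(L/2)^2)$ and substitute the three probabilities. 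The key organising identity is the sum rule $p_0+p_1+p_2=\sqrt2/2$ (equivalently $\P(\Z=0)=1-\sqrt2/2$), which collapses the common $4n^2$ coefficient into the single term $2\sqrt2\,n^2$. The remaining lower-order contributions then recombine cleanly: the $b$-terms collapse to $\frac{1}{\pi}\arcsin(\cos(L/2)^2)$, the purely rational terms to $-\tfrac12$, and the $a$- and $1/\sqrt2$-terms assemble into $\frac{(4n+1)L}{\pi\sqrt2}-2\sqrt2\,n(n+1)$, reproducing the stated identity.

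For the asymptotic as $L\to\infty$ I would substitute $n=L/(2\pi)-\{L/2\pi\}$ into the closed form and expand. The only subtlety—and the main (indeed the sole) obstacle—is that the two quadratic pieces carry $L^2$ terms of opposite sign which partially cancel: $\frac{(4n+1)L}{\pi\sqrt2}$ contributes $\frac{\sqrt2}{\pi^2}L^2$ while $-2\sqrt2\,n(n+1)$ contributes $-\frac{\sqrt2}{2\pi^2}L^2$, leaving the net $\frac{\sqrt2}{2\pi^2}L^2=\frac{L^2}{\pi^2\sqrt2}$. All terms involving the fractional part $\{L/2\pi\}$, the constant $-\tfrac12$, and the bounded $\arcsin$ are $O(L)$, which yields the claimed $\E{\Z(\Z-1)}\sim\frac{L^2}{\pi^2\sqrt2}+O(L)$. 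Everything beyond tracking this cancellation is mechanical.
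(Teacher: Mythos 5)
Your proposal is correct and is exactly the paper's argument: the authors also obtain the corollary by substituting the distributions from Propositions \ref{distr0} and \ref{distrpi4} into $\E{\Z(\Z-1)}=\sum_k k(k-1)\p(\Z=k)$, and your explicit bookkeeping (the sum rule $p_0+p_1+p_2=\sqrt2/2$ collapsing the $4n^2$ terms, and the partial cancellation of the two $L^2$ contributions in the asymptotic) checks out.
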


\begin{proof}
	The second factorial moment
	\begin{equation*}
	\E{\Z(\Z-1)}:=\sum_{n=0}^{+\infty}n(n-1)\p(\Z=n)
	\end{equation*}
	is computed directly from the distribution of $\Z$ given by Propositions \ref{distr0} and \ref{distrpi4}.
\end{proof}

\begin{comment}
\begin{prop}
	Let $u$ rational, and $f_u$ be of period $T$. Then
	\begin{equation}
	\p(\Z=0)=
	\begin{cases}
	\leq\frac{1}{2}+\frac{1}{\pi}\arcsin\left[\frac{1}{2}(\cos(2\pi R\cos(u)))+\cos(2\pi R\sin(u))\right]
	&
	0\leq R\leq T,
	\\
	0
	&
	R\geq T.
	\end{cases}
	\end{equation}
\end{prop}
\begin{proof}
	
\end{proof}

\new{The order of magnitude for the second factorial moment in case $u=0,\pi/4$ is the same as in the case $\tan(u)\not\in\mathbb{Q}$ (recall Proposition \ref{2sb}).}\footnote{\new{Might need to remove this}} 
\end{comment}
It is worth noting that for $u=0$ we have
\begin{equation*}
\text{Var}(\Z)=\frac{\sqrt{2}-1}{2\pi^2}L^2+O(L)
\end{equation*}
while for $u=\pi/4$
\begin{equation*}
\text{Var}(\Z)=\left\{\frac{L}{\pi\sqrt{2}}\right\}-\left\{\frac{L}{\pi\sqrt{2}}\right\}^2
\qquad\Rightarrow\qquad
0\leq\text{Var}(\Z)\leq\frac{1}{4}.
\end{equation*}
Indeed, in the computation of $\text{Var}(\Z)$ for $u=\pi/4$ several terms of order $L^2$ and in $L$ remarkably cancel out.  %This smaller order variance is tantamount to the periodicity of $f_{\pi/4}$.
It is natural to conjecture $\E{\Z(\Z-1)}\asymp L^2$ as $L\to\infty$ for the restriction of the Cilleruelo random wave to any straight line. Possibly the leading asymptotic is $\E{\Z(\Z-1)}\sim L^2\cos(u)/\pi^2\sqrt{2}$.

For $u=0,\pi/4$ computing explicitly the distribution of $\Z$ yields in particular the persistence of the Cilleruelo field along these directions. In case of general $u$, we have several upper and lower bounds for the persistence, as prescribed by Proposition \ref{percill}. We now complete the proof of this result.
\begin{comment}
\begin{prop}
Assume the random wave $F$ has Cilleruelo limiting measure, and that $f_u(t)=F(t\cos(u),t\sin(u))$, with $0\leq u\leq\pi/4$, is defined for $t\in[0,L]$. As $L\to\infty$ we have
\begin{equation*}
\p(\Z=0)
\begin{cases}
=1-\frac{\sqrt{2}}{2}
&
u=0;
\\
\geq C>0
&
Lu\shortarrow{1}c'<1/4;
\\
\gg\exp(-\frac{C_0}{(4Lu-1)^2})
&
Lu\shortarrow{1}1/4;
\\
\leq\exp(-C_1L^2\sin(u)^2)
&
u\text{ fixed,} \ 0<u<\pi/4;
\\
\geq C_2(\epsilon)>0
&
L\sin(u)<1/2-\epsilon;
\\
\leq C_3(\epsilon)<1
&
L\sin(u)>1/4+\epsilon;
\\
=0
&
L\sin(u)\geq 1, \ u\text{ fixed,} \ 0<u\leq\pi/4;
\\
=1-2L\sin(u)
&
u=\pi/4, \ L\sin(u)\leq 1/2;
\\
=0
&
u=\pi/4, \ L\sin(u)\geq 1/2.
%
\end{cases}
\end{equation*}
\end{prop}
\end{comment}
\begin{proof}[Proof of Proposition \ref{percill}]
Statement \eqref{i} %, \eqref{viii},%and \eqref{ix}
has already been proven in Corollary \ref{precper}.
Recall the expression \eqref{cilpro}
\begin{multline*}
f_u(t)=\frac{\sqrt{2}}{2}\left[b_1\cos\left(t\cos(u)\right)+b_2\cos\left(t\sin(u)\right)\right.\\\left.+c_1\sin\left(t\cos(u)\right)+c_2\sin\left(t\sin(u)\right)\right].
\end{multline*}

To show \eqref{ii}, assume $Lu\shortarrow{1}C_1<\pi/2$. For a parameter $k$ we write
\begin{equation*}
\p(\Z=0)>\p\left(|b_1|,|c_1|,|c_2|\leq 3k \ \wedge \ |b_2|>\frac{4k}{\cos\left(L\sin(u)\right)}\right).
\end{equation*}
Via Gaussian ball and tail estimates \cite[Lemma 3.14]{fefeni},
\begin{equation*}
\p(\Z=0)\geq C_7(k\exp(-k^2/2))^3\exp\left(\frac{-16k^2}{\cos\left(L\sin(u)\right)^2}\right)\geq C_0>0.
\end{equation*}

To show \eqref{iii}, let $Lu\shortarrow{1}\pi/2$: in this regime the computations for \eqref{ii} yield
\begin{equation*}
\p(\Z=0)\geq C_8\exp\left(\frac{-16k^2}{\cos\left(L\sin(u)\right)^2}\right)\geq C_2\exp\left(-\frac{C_3}{(Lu-\pi/2)^2}\right).
\end{equation*}

Statement \eqref{iv} follows directly from Proposition \ref{perub}. Indeed, unless $u=0$, one has a spectral gap and the assumptions of Proposition \ref{perub} are verified.

To show \eqref{v}, fix $0<u\leq\pi/4$ and $\epsilon>0$. If $x\in[-\pi/2+\epsilon,\pi/2-\epsilon]$ then $\cos(x)\geq\cos(\pi/2-\epsilon)>0$. With probability $1$ the Gaussian coefficients $b_1,b_2,c_1,c_2$ have different values, and w.l.o.g. $b_1$ dominates. With positive probability $b_1$ is big enough and $|b_2|,|c_1|,|c_2|$ small enough so that the Cilleruelo field $F(x)>0$ for all $x\in[-\pi/2+\epsilon,\pi/2-\epsilon]$. It follows that with probability $C_5(\epsilon)>0$ the process $f$ remains positive for $L\sin(u)<\pi-2\epsilon$.
%C:=L\sin(u)<\pi-2\epsilon

To show \eqref{vi}, fix $0<u\leq\pi/4$ and $\epsilon>0$. With probability $1$ the Gaussian coefficients $b_1,b_2,c_1,c_2$ have different values, and w.l.o.g. $b_2$ dominates. With positive probability $f_u(0)>0$. Moreover, with positive probability $b_2$ is big enough and $|b_1|,|c_1|,|c_2|$ small enough so that $f_u((\pi/2+\epsilon)/\sin(u))$ has the same sign as $\cos(\pi/2+\epsilon)<0$. Therefore, with positive probability depending on $\epsilon$, $f_u(t)$ changes sign for $t\in[0,\pi/2+\epsilon]$. By continuity, with probability $C_6(\epsilon)>0$ the process $f_u$ has at least one zero provided $L\sin(u)>\pi/2+\epsilon$.

The last part \eqref{vii} is almost trivial. With probability one, the nodal lines of the Cilleruelo field $F$ are either $2\pi$ periodic vertical or horizontal lines. This means that they must intersect any interval of length $L$ in the direction $u$ as long as its projections onto both vertical and horizontal directions are longer than $2\pi$.

To be more precise we fix $0<u\leq\pi/4$. Almost surely $b_1^2+c_1^2\neq b_2^2+c_2^2$, and suppose for a moment that $b_1^2+c_1^2$ is larger. Again with probability $1$ we have $b_1,c_1\neq 0$ and w.l.o.g. $c_1>0$. Then for all $x_2$ one has 
\begin{equation*}
F(\alpha_1,x_2)>\sqrt{b_1^2+c_1^2}-\sqrt{b_2^2+c_2^2}>0
\end{equation*}
where
\begin{equation}
\label{a1}
    0<\alpha_1=2\arctan\left(\frac{\sqrt{b_1^2+c_1^2}-b_1}{c_1}\right)<\pi.
\end{equation}
Likewise, for all $x_2$ one has
\begin{equation*}
F(\alpha_2,x_2)<-\left(\sqrt{b_1^2+c_1^2}-\sqrt{b_2^2+c_2^2}\right)<0
\end{equation*}
where
\begin{equation}
\label{a2}
    \pi<\alpha_2=2\pi-2\arctan\left(\frac{\sqrt{b_1^2+c_1^2}+b_1}{c_1}\right)<2\pi.
\end{equation}
This determines nodal lines crossing the torus from top to bottom. Similarly, in case $b_2^2+c_2^2>b_1^2+c_1^2$ then there are nodal lines crossing the torus from left to right. It follows that if $L\sin(u)\geq 2\pi$ then a.s. the straight line $\mathcal{C}$ crosses the nodal line, i.e. $f_u$ has a zero.
\end{proof}

%the Gaussian coefficients $b_1,b_2,c_1,c_2$ have different values. Suppose for a moment that $b_1$ dominates. Then there exist $\alpha,\beta,\gamma$ s.t. $F(x)>0$ for $x\in[\alpha,\beta]$ and also $F(x)<0$ for $x\in[\alpha+\gamma,\beta+\gamma]$. This determines nodal lines crossing the torus from top to bottom. One reaches the same conclusion in case $c_1$ dominates. Likewise, if $b_2$ or $c_2$ dominates 

\section{Coupling}
\label{cou}
We will need a technical result which states that the  a field with small variance is small everywhere with probability which is close to one. The precise formulation is given in the following lemma.
\begin{lemma}[Lemma 3.12 of \cite{MuVa}]
\label{l:Kolmogorov}
There exists an absolute constant $c > 0$ such that, for every $C^1$-smooth planar Gaussian field $g$ and for all $R \ge  c$ and $h\ge \log R$
\begin{equation}
\P\left[ \|g\|_{C(B_R)}\ge mh\right]\le \exp(-h^2/c)
\end{equation} 
where
\[ 
m^2 =\sup_{x\in B_{R+1}}\sup_{|\alpha|\le 1}\E{(\partial^\alpha g)^2(x)}.
\]
\end{lemma}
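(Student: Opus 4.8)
The plan is to combine the Borell--Tsirelson--Ibragimov--Sudakov (Borell--TIS) concentration inequality with Dudley's entropy bound, using the variance of the gradient to control the canonical metric of $g$. Write $S:=\|g\|_{C(B_R)}=\sup_{x\in B_R}|g(x)|$; since $g$ is $C^1$-smooth, $S$ is a.s.\ finite and is a measurable seminorm of the Gaussian field (if $m=0$ then $g\equiv0$ a.s.\ and there is nothing to prove, so assume $m>0$). Because $\sup_{x\in B_R}\Var[g(x)]=\sup_{x\in B_R}\E{g(x)^2}\le m^2$ (the $\alpha=0$ term in the definition of $m^2$), Borell--TIS gives
\[
\P\brb{S\ge \E{S}+t}\le \exp\br{-\frac{t^2}{2m^2}},\qquad t>0.
\]
The whole problem thus reduces to the estimate $\E{S}\le C_1 m\sqrt{\log R}$ for an absolute constant $C_1$: the point is that the expected supremum grows only like $\sqrt{\log R}$, so for $h\ge\log R$ the threshold $mh$ sits far above $\E{S}$. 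Note that the only field-dependent quantity entering any bound is $m$, which is what renders the final constant $c$ absolute.

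To bound $\E{S}$ I would use Dudley's entropy integral for the canonical metric $d(x,y):=\E{(g(x)-g(y))^2}^{1/2}$, for which the two halves of the definition of $m^2$ supply complementary estimates. On the one hand $d(x,y)\le\|g(x)\|_{L^2}+\|g(y)\|_{L^2}\le 2m$, so the $d$-diameter of $B_R$ is at most $2m$. On the other hand, writing $g(x)-g(y)=\int_0^1\langle\nabla g(y+s(x-y)),x-y\rangle\,ds$ and applying Minkowski's integral inequality together with $\E{|\nabla g(z)|^2}\le 2m^2$ yields the Lipschitz bound $d(x,y)\le\sqrt2\,m\,|x-y|$. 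Hence $N(B_R,d,\epsilon)\le(CRm/\epsilon)^2$ for $\epsilon\le2m$ while $N=1$ for $\epsilon>2m$, and Dudley's bound gives
\[
\E{S}\le C\int_0^{2m}\sqrt{\log N(B_R,d,\epsilon)}\,d\epsilon\le Cm\int_0^{2}\sqrt{\log(CR/u)}\,du\le C_1m\sqrt{\log R},
\]
where the substitution $\epsilon=mu$ leaves an integral bounded by an absolute constant times $\sqrt{\log R}$ once $R$ is large.

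Finally I would assemble the pieces. Fix $h\ge\log R$ and take $R\ge c$ with the absolute constant $c$ large enough that $\log R\ge 4C_1^2$; then $C_1\sqrt{\log R}\le C_1\sqrt h\le h/2$, so $\E{S}\le\tfrac12 mh$ and $mh-\E{S}\ge\tfrac12 mh$. Applying Borell--TIS with $t=mh-\E{S}$ gives
\[
\P\brb{S\ge mh}\le\exp\br{-\frac{(mh-\E{S})^2}{2m^2}}\le\exp\br{-\frac{h^2}{8}},
\]
and taking $c\ge 8$ (and large enough for the earlier thresholds) yields the stated bound. The \textbf{main obstacle}, and the only step where the gradient hypothesis is used, is the Lipschitz estimate $d(x,y)\le\sqrt2\,m|x-y|$ on the canonical metric; everything after that is routine Gaussian concentration and chaining, and the $\sqrt{\log R}$ scaling of $\E{S}$ is precisely what the restriction $h\ge\log R$ is designed to absorb.
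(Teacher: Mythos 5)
Your proof is correct, and it is essentially the argument behind the result the paper invokes: the paper itself does not prove this lemma but imports it (in slightly restated form) from Lemma 3.12 of \cite{MuVa}, remarking that the proof there applies verbatim. That proof is of the same concentration-plus-chaining type as yours: reduce to $\E{\|g\|_{C(B_R)}}\lesssim m\sqrt{\log R}$ and finish with Gaussian concentration. The main structural difference is that the argument in \cite{MuVa} proceeds by discretising $B_R$ into unit boxes and controlling the oscillation on each box via the derivative bound, with a union bound over the $O(R^2)$ boxes producing the $\sqrt{\log R}$ --- this discretisation is exactly why the statement defines $m$ over the enlarged ball $B_{R+1}$. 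Your Dudley-entropy route never uses the enlargement (the segment joining two points of the convex set $B_R$ stays in $B_R$), so you in fact prove a marginally stronger statement; both routes are equally rigorous. Three pedantic points, none of which is a genuine gap: (i) the field must be centred, as are all fields in this paper --- both your appeal to Borell--TIS and your identification of $\E{g(x)^2}$ with the variance use this, so it should be stated; (ii) Borell--TIS controls $\sup g$ rather than $\sup|g|$, so one should either double the index set $\{\pm g(x):x\in B_R\}$ (same variance bound, same entropy up to a factor $2$) or use $\sup|g|=\max(\sup g,\sup(-g))$, as you implicitly do; (iii) in the degenerate case $m=0$ it is not quite true that ``there is nothing to prove'': the event $\|g\|_{C(B_R)}\ge mh=0$ has probability one, so the inequality as literally stated fails for the a.s.\ zero field --- this is an artefact of the non-strict inequality in the statement (shared with the original formulation) rather than a defect of your argument, but it is worth flagging rather than dismissing.
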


Note that this statement is slightly different from that of Lemma 3.12 of \cite{MuVa}, but the proof is exactly the same. 

We are ready to prove Theorem \ref{discrete}.
\begin{proof}[Proof of Theorem \ref{discrete}]
Recall the definition \eqref{G} of $G$
\begin{equation}
\label{Gdiff}
G(x)=\frac{\sqrt{2}}{2}\left[\sum_{j=1}^{2M}b_j\cos(\langle x,y_j\rangle)+\sum_{j=1}^{2M}c_j\sin(\langle x,y_j\rangle)\right],
\end{equation}
where $N=4M$, $b_j,c_j\sim\N(0,2/N)$ i.i.d. and $y_j=(\cos(\varphi_j),\sin(\varphi_j))$,  $j=1,\dots,N$ with $|\varphi_j|\leq\epsilon$ for each $j=1,\dots,M$ for some fixed $\epsilon>0$. We defined the coupled field
\begin{align}
\label{Fdiff}
\notag
F(x)&=\frac{\sqrt{2}}{2}\left[(\sum_{|\varphi_j|\leq\epsilon}b_j)\cos(\langle x,(1,0)\rangle)+(\sum_{|\pi/2-\varphi_j|\leq\epsilon}b_j)\cos(\langle x,(0,1)\rangle)
\right.
\\
&\left.
+(\sum_{|\varphi_j|\leq\epsilon}c_j)\sin(\langle x,(1,0)\rangle)+(\sum_{|\pi/2-\varphi_j|\leq\epsilon}c_j)\sin(\langle x,(0,1)\rangle)\right]
\end{align}
where each summation has $M$ summands. It is easy to see that $F$ has the distribution of the Cilleruelo field. 

Subtracting \eqref{Fdiff} from \eqref{Gdiff},
\begin{align*}
	H(x):=(G(x)-F(x))\sqrt{2}&=\sum_{|\varphi_j|\leq\epsilon}b_j[\cos(\langle x,y_j\rangle)-\cos(\langle x,(1,0)\rangle)]\\&+\sum_{|\pi/2-\varphi_j|\leq\epsilon}b_j[\cos(\langle x,y_j\rangle)-\cos(\langle x,(0,1)\rangle)]\\&+\sum_{|\varphi_j|\leq\epsilon}c_j[\sin(\langle x,y_j\rangle)-\sin(\langle x,(1,0)\rangle)]\\&+\sum_{|\pi/2-\varphi_j|\leq\epsilon}c_j[\sin(\langle x,y_j\rangle)-\sin(\langle x,(0,1)\rangle)].
\end{align*}
We claim that $H(x)$, which is the difference between coupled fields is uniformly small with large probability.

To shorten and simplify the calculations we will estimate the first sum only. Estimates for other sum (or for all of them simultaneously) can be done in exactly the same way. Let us consider the function
\[
H_1(x)=\sum_{|\varphi_j|\leq\epsilon}b_j[\cos(\langle x,y_j\rangle)-\cos(\langle x,(1,0)\rangle)].
\]
This is a non-stationary Gaussian field with covariance
\[
K(x_1,x_2)=\frac{2}{N}\sum_{j=1}^M q(x_1,y_j)q(x_2,y_j),
\]
where
\[
q(x,y)=\cos(\langle x,y\rangle)-\cos(\langle x,(1,0)\rangle).
\]
Since cosine is a Lipschitz function with norm $1$ and $|y_j-(1,0)|\le \epsilon$, we have $q(x,y)\le |x|\epsilon$. By the same argument $|\nabla_x q(x,y)|\le |x|\epsilon$. Let us consider $m$ as in Lemma \ref{l:Kolmogorov} when $g=H_1$. The estimates above, imply that $m\le (R+1)\epsilon$. Considering $h=\log R$, Lemma \ref{l:Kolmogorov} gives
\[
\P\brb{\|H_1\|_{C(B_R)}\ge 2\epsilon R\log R}\le \exp(-\log^2 R/c).
\] 
By considering $h=R$ we get
\[
\P\brb{\|H_1\|_{C(B_R)}\ge 2\epsilon R^2 }\le \exp(-R^2/c).
\] 
Applying the same argument to the other terms (or estimating the covariance function of $H$) we complete the proof of the theorem.
\end{proof}

We end this section by proving Proposition \ref{perfg}.

\begin{proof}[Proof of Proposition \ref{perfg}]
Recall the expression for the restricted Cilleruelo field $f$ \eqref{cilpro}. Let $0<u\leq\pi/4$ and $L\sin(u)\geq 2\pi$. Suppose that 
\begin{equation*}
    \left|\sqrt{b_1^2+c_1^2}-\sqrt{b_2^2+c_2^2}\right|>2\epsilon L \log L
\end{equation*}
and moreover that
\begin{equation*}
    |G(x)-F(x)|<2\epsilon L \log L
\end{equation*}
for every $x\in B(0,L)$. Then the proof of Proposition \ref{percill} \eqref{vii} tells us that for every $x_2$ we have $G(\alpha_1,x_2)>0$ and $G(\alpha_2,x_2)<0$ with $\alpha_1,\alpha_2$ as in \eqref{a1} and \eqref{a2} respectively. As $L\sin(u)\geq 2\pi$, it follows that the process $G$ has a zero in $[0,L]$. 

By the argument above the event $\Z_g=0$ is inside the union of two events:
\[
\left|\sqrt{b_1^2+c_1^2}-\sqrt{b_2^2+c_2^2}\right|<2\epsilon L \log L 
\]
which means that $F$ is close to zero, and
\[
\|G-F\|\ge 2\epsilon L \log L 
\]
which means that $F$ and $G$ differ too much.

The probability of the first event is 
\[
\p\br{|X_1-X_2|<2\epsilon L \log L}={\sqrt{\pi}}\epsilon L \log L +O( \epsilon L \log L)^2,
\]
where $X_1$ and $X_2$ are independent $\chi(2)$ random variables.

By Theorem \ref{discrete} the probability of the second event is bounded by 
\begin{equation}
\label{eq:div}
\exp(-(\log^2 L)/c).
\end{equation}
 
Combining these estimates we have
\[
\p(\Z_g=0)\le \exp(-(\log^2 L)/c)+{\sqrt{\pi}}\epsilon L \log L +O( \epsilon L \log L)^2.
\]

For the rest of this proof fix $u=0$. We apply the same logic: if the restriction of $F$ is not too small and $F-G$ is small, then the restriction of $G$ is also of constant sign. 

The event $f>2\epsilon L \log L$ is equivalent  to $b_2>\sqrt{b_1^2+c_1^2}+2\epsilon L \log L$. As $b_2\sim\mathcal{N}(0,1)$ and $\sqrt{b_1^2+c_1^2}\sim\chi(2)$ are independent, we get
\begin{equation}
    \label{es5}
    \p(f>2\epsilon L \log L)=\frac{1}{2}\left(1-\frac{\sqrt{2}}{2}-\frac{2\epsilon L \log L }{\sqrt{2\pi}}+O(\epsilon L \log L )^2\right)
\end{equation}
via a routine computation. Combining this with the bound \eqref{eq:div}
\[
\p(\Z_g=0)\geq 
1-\frac{\sqrt{2}}{2}-\exp(-(\log^2 L)/c)-\frac{2\epsilon L \log L }{\sqrt{2\pi}}+O(\epsilon L \log L )^2.
\]

\end{proof}

\addcontentsline{toc}{section}{References}

\bibliography{bibfile}{}
\bibliographystyle{plain}

\Addresses

\end{document}